\theoremstyle{thmstyleone} 
  \newtheorem{theorem}{Theorem}
\newtheorem{lemma}[theorem]{Lemma}
\newtheorem{proposition}[theorem]{Proposition}
\theoremstyle{thmstyletwo} 
\theoremstyle{thmstylethree} 
\newtheorem{remark}{Remark}
\newtheorem{corollary}[theorem]{Corollary}
  \newcounter{MaxMatrixCols}
\newcommand{\boldheading}[1]{\par\textbf{{#1}.}}
\newcommand{\Bezier}{B\'{e}zier}
\newcommand{\mathup}[1]{\text{\textup{#1}}}
\newcommand{\mc}[1]{\mathcal{#1}}
\newcommand{\mb}[1]{\mathbb{#1}}
\newcommand{\ms}[1]{\mathscr{#1}}
\newcommand{\R}{\mathbb{R}}
\newcommand{\N}{\mathbb{N}}
\newcommand{\mE}{\mb{E}}
\newcommand{\mV}{\mb{V}}
\newcommand{\Lo}{L^{2}(\Omega)}
\newcommand{\Lot}{L^{2}(\Omega \times T)}
\newcommand{\RNM}{\R^{\nx}_{\mat M}}
\newcommand{\RNMT}{\R^{\nx \times \nt}_{\mat M}}
\newcommand{\M}{\ms{M}}
\newcommand{\U}{\ms{U}}
\newcommand{\Lsym}{\mc{L}_{\textnormal{sym}}}
\newcommand{\Ltr}{\mc{L}_{\textnormal{tr}}}
\newcommand{\Ty}{T_{y}}
\newcommand{\lvertiii}{\left\vert\mkern-1mu\left\vert\mkern-1mu\left\vert}
\newcommand{\rvertiii}{\right\vert\mkern-1mu\right\vert\mkern-1mu\right\vert}
\newcommand{\opnorm}[1]{\mathopen{}\lvertiii #1 \rvertiii\mathclose{}}
\newcommand{\bdot}[1]{\accentset{\scalebox{0.35}{\(\bullet\)}}{#1}}
\newcommand{\ny}{n_{y}}
\newcommand{\nt}{n_{t}}
\newcommand{\nx}{n_{x}}
\renewcommand{\vec}[1]{\boldsymbol{#1}}
\newcommand{\mat}[1]{\mathbf{#1}}
\newcommand{\pr}{\textnormal{pr}}
\newcommand{\po}{\textnormal{post}}
\newcommand{\mis}{\textnormal{mis}}
\newcommand{\MAP}{\textnormal{MAP}}
\newcommand{\Cpo}{\mc{C}_{\po}}
\newcommand{\Cpr}{\mc{C}_{\pr}}
\newcommand{\Gpo}{\mat{\Gamma}_{\po}}
\newcommand{\Gpr}{\mat{\Gamma}_{\pr}}
\newcommand{\Gpor}{\mat{\Gamma}_{\po, r}}
\newcommand{\tr}{\mathrm{tr}}
\newcommand{\defeq}{\vcentcolon=}
\newcommand{\iHmis}{\mc{H}_{\mis}}
\newcommand{\fHmis}{\mat H_{\mis}}
\newcommand{\mpr}{m_{\pr}}
\newcommand{\mtrue}{m_{\textnormal{true}}}
\newcommand{\mupoy}{\mu_{\po}^{\vec y, \vec \xi}}
\newcommand{\fmMAPy}{\vec m_{\textnormal{MAP}}^{\vec y}}
\newcommand{\grad}{\nabla}
\newcommand{\sig}{\sigma}
\newcommand{\alp}{\alpha}
\newcommand{\bet}{\beta}
\newcommand{\lam}{\lambda}
\newcommand{\gam}{\gamma}
\newcommand{\vep}{\varepsilon}
\newcommand{\Om}{\Omega}
\newcommand{\om}{\omega}
\newcommand{\eps}{\varepsilon}
\newcommand{\dt}{\Delta t}
\newcommand{\vxi}{\vec \xi}
\newcommand{\vr}{\vec r}
\newcommand{\vx}{\vec x}
\newcommand{\xbar}{\bar{\vx}}
\newcommand{\vy}{\vec y}
\newcommand{\dr}{\delta_{\vec r}}
\newcommand{\llangle}{\langle\!\langle}
\newcommand{\rrangle}{\rangle\!\rangle}
\newcommand{\bllangle}{\big\langle\!\big\langle}
\newcommand{\brrangle}{\big\rangle\!\big\rangle}
\newcommand{\pj}{\partial_j}
\newcommand{\rf}{\vr_{\textnormal{f}}}
\newcommand{\rb}{\vr_{\textnormal{b}}}
\newcommand{\xif}{\vxi_{\textnormal{f}}}
\newcommand{\xib}{\vxi_{\textnormal{b}}}
\newcommand{\Nf}{N_{\textnormal{f}}}
\newcommand{\Nb}{N_{\textnormal{b}}}
\newcommand{\Rd}{R_{\Omega}}
\newcommand{\Tf}{\mat T_{\textnormal{f}}}
\newcommand{\void}{p}
\newcommand{\Ovoid}{D}
\newcommand{\Rvoid}{R_{D}}
\newcommand{\xvoid}{\vec x_{D}}
\newcommand{\Void}{\mat P}
\newcommand{\Hvoid}{\mc{H}_{D}}
\newcommand{\cv}{\mathrm{CV}}
\newcommand{\PsiA}{\Psi_{\text{A}}}
\begin{document}

\title[Path-OED for linear inverse problems governed by PDEs]{Path-OED for infinite-dimensional Bayesian linear inverse problems governed by PDEs}

\author*[1]{\fnm{J.~Nicholas} \sur{Neuberger}}\email{jnneuber@ncsu.edu}
\author[1]{\fnm{Alen} \sur{Alexanderian}}
\author[2]{\fnm{Bart} \spfx{van} \sur{Bloemen Waanders}}
\author[3]{\fnm{Ahmed} \sur{Attia}}

\affil*[1]{\orgdiv{Department of Mathematics}, \orgname{North Carolina State University}, \city{Raleigh}, \state{NC}, \country{USA}}

\affil[2]{\orgdiv{Center for Computing Research}, \orgname{Sandia National Laboratories}, \city{Albuquerque}, \state{NM}, \country{USA}}

\affil[3]{\orgdiv{Mathematics and Computer Science}, \orgname{Argonne National Laboratory}, \city{Lemont}, \state{IL}, \country{USA}}

\abstract{
We consider infinite-dimensional Bayesian linear inverse problems governed by 
time-dependent partial differential equations (PDEs) and develop a mathematical and computational framework for
optimal design of mobile sensor paths in this setting.  The proposed path
optimal experimental design (path-OED) framework is established rigorously in a 
function space setting and elaborated for the case of  Bayesian c-optimality, 
which quantifies the posterior variance in a linear functional of the inverse
parameter. The latter is motivated by goal-oriented formulations, where we seek to minimize the uncertainty 
in a scalar prediction of interest. To facilitate computations, we complement the proposed
infinite-dimensional framework with discretized formulations, in suitably
weighted finite-dimensional inner product spaces, and derive efficient methods
for finding optimal sensor paths.  The resulting computational framework is
flexible, scalable, and can be adapted to a broad range of linear inverse
problems and design criteria.  We also present extensive computational
experiments, for a model inverse problem constrained by an advection-diffusion
equation, to demonstrate the effectiveness of the proposed approach.}

\maketitle

\section{Introduction}
\label{sec:intro}

Physical models governed by partial differential equations (PDEs) often contain
unknown parameters that must be inferred. This requires solving an inverse problem.  We
consider linear Bayesian inverse problems governed by PDEs with
infinite-dimensional parameters.  By utilizing measurement data and a prior
distribution, we apply Bayes' theorem to obtain a posterior distribution for the
unknown parameter.  The informativeness of the data greatly influences the
quality of the solution to the inverse problem. Consequently, optimizing the
data acquisition process is essential and can be framed as an optimal
experimental design (OED) problem.  We consider inverse problems
in which data are collected using mobile sensors such as drones, human-operated
devices, or robots with programmable motion. 

In the present work, solving an OED problem is equivalent to minimizing an optimality criterion $\Psi$---a functional that
quantifies uncertainty in the unknown parameter or a function of the
parameter. The criterion is a function of an experimental \emph{design vector} $\vxi$. 
This vector parameterizes how data are acquired and, in our case, 
determines the path of a sensor.  Given an admissible design set $\Xi$, the
OED problem is
\begin{equation}
\label{eq:OED_intro}
\underset{\vxi \in \Xi}{\text{argmin}} \ \Psi(\vxi).
\end{equation} 
We consider OED problems where data acquisition is constrained to a set of
permissible sensor paths. 
Henceforth, we use the term \emph{path-OED} to refer to such problems.

\boldheading{Related work}
There is a rich body of literature devoted to various aspects of OED.  Textbook
references on this topic include~\cite{AtkinsonDonev92,Pukelsheim93,Ucinski05}.
These books consider OED within a frequentist setting. For an overview of
Bayesian OED, we refer to~\cite{Chaloner95,HuanJagalurMarzouk24}. For background and references on 
optimal design of infinite-dimensional Bayesian inverse problems governed by 
PDEs, see, e.g.,~\cite{Alexanderian21}. The majority of recent efforts on OED
for PDE-based Bayesian inverse problems concern optimal placement of
static sensors.

The field of path-planning for mobile sensing platforms has become quite
extensive in light of modern computational and robotic advancements.  The
books~\cite{choset2005principles,lavalle2006planning,latombe2012robot,roka2018advanced} provide a
comprehensive coverage of path-planning developments. See
also~\cite{gasparetto2015path,karthik2021}.  Methods for planning paths range
from graph-based algorithms~\cite{hart1968formal}, trajectory parameterization~\cite{klanvcar2019drivable, su2018time},
machine and reinforcement learning~\cite{singh2023review, tullu2021machine}, and rapidly-exploring random
trees~\cite{lavalle2001rapidly}.  Often, these approaches are applied to
manufacturing and autonomous vehicles. There have also been works on path
planning for environmental monitoring and control; see, e.g.,~\cite{dunbabin2012robots}.

Considerably less work has been developed in the area of path-planning of mobile
sensors for the purpose of parameter identification and prediction in physical
systems governed by PDEs. An early attempt at this problem is provided
in~\cite{raf1986}, where optimal sensor trajectories are obtained by
minimizing the log determinant of the Fischer information matrix. Subsequent
works built on this idea. In particular, the books~\cite{tricaud2011optimal,
Ucinski05} develop frameworks for obtaining optimal sensor trajectories in
parameter estimation problems. In both works, sensor trajectories are
characterized by ordinary differential equations and the optimal paths reduce a
frequentist description of uncertainty in a small number of scalar-valued parameters.
The paper~\cite{Ucinski052008} provides a concise
overview of this approach. Other methods use ensembles of machine learning
models or Kalman filters. 
In particular, optimal sensor trajectories are
obtained in the work~\cite{Mei24} using a greedy algorithm to enhance Kalman
filter performance.

\boldheading{Our approach and contributions}
We model sensor trajectories with parametric equations of the form $\vr(t;
\vxi)$. Here, $t \in \R$ is the time variable and the parameter $\vxi \in
\R^{N}$ is the design parameter.  The proposed path-OED framework is agnostic to
the specific parameterization of $\vr$ and enables us to encode various constraints related to the sensor apparatus.
We first formulate the path-OED problem in function space and specialize it using the c-optimality criterion, chosen for its prevalence and flexibility. While we present results for c-optimality, the framework extends to other infinite-dimensional criteria, including A- and D-optimality. The function-space setting provides a rigorous probabilistic foundation.  This also guides proper choices of the prior measure and a
consistent discretization strategy. In particular, upon successive mesh
refinements, the discretized design criteria approximate well-defined design
criteria in the function space setting.  A key difference between 
the present path-OED problem formulation with existing works on optimal 
design of PDE-based inverse problems is the complex nature 
of the observation operator. Namely, in the present setting, the observation 
operator is time-dependent and is a nonlinear function of the design vector $\vxi$.
We translate the function space
formulation to a discrete setting by considering suitable finite-dimensional
Hilbert spaces.  Subsequently, we derive novel and efficient path-tailored
techniques that utilize low-rank approximations, fast solvers, and adjoint-based
methods for derivative computations.
The resulting computational framework is flexible, scalable,
and can be adapted to a broad range of linear inverse problems.   

We elaborate the proposed approach in the case of the Bayesian c-optimality
criterion, which quantifies the posterior variance in a linear functional of the
inversion parameter. Moreover, we investigate path constraints for two specific
path parameterizations; one based on \Bezier{} curves and the other motivated by
Fourier expansions. Lastly, in our numerical results, we
compute optimal sensor paths for a model inverse problem governed by a
time-dependent advection diffusion equation. 

The key contributions of this work are 
\begin{itemize}
\item a rigorous and extensible mathematical framework for path-OED in
infinite-dimensional linear Bayesian inverse problems (see
Section~\ref{sec:OED});

\item specialized techniques 
for two classes of parameterized paths---\Bezier{} and Fourier---and 
for incorporating path constraints (see Section~\ref{sec:parameterized_paths});
\item 
a scalable computational framework for path-OED (see Section~\ref{sec:methods}); and 
\item 
comprehensive computational results
demonstrating the effectiveness of our approach (see Section~\ref{sec:numerics}). 
\end{itemize}

Before detailing our proposed approach, we briefly discuss some preliminaries 
regarding the class of infinite-dimensional Bayesian inverse problems under study in
Section~\ref{sec:inverse_problem}.

\section{The inverse problem}
\label{sec:inverse_problem}
In this section we consider the problem of estimating an unknown parameter in a
model governed by a linear time-dependent PDE.  Herein, the governing dynamics
are referred to as the \emph{inversion model} and the unknown parameter is
called the \emph{inversion parameter}. We seek to infer the inversion parameter
from discrete measurements of the inversion model state variable 

We begin by introducing some notation involving linear operators on Hilbert
spaces. Subsequently, we formulate the \emph{data model} that simulates sensor
measurements. This is followed by a brief description of the Bayesian approach to
solving infinite-dimensional linear Bayesian inverse problems.

\textbf{Notation.}\quad
Let $\ms{X}$ and $\ms{Y}$ be infinite-dimensional real separable
Hilbert spaces.  We use $\mc{L}(\ms{X}, \ms{Y})$ to denote the Banach space of
bounded linear operators from $\ms{X}$ to $\ms{Y}$ and $\mc{L}(\ms{X})$ as the
space of bounded linear operators from $\ms{X}$ to $\ms{X}$.  We let
$\Lsym(\ms{X})$ denote the space of self-adjoint bounded linear operators on
$\ms{X}$, $\Lsym^{+}(\ms{X})$ the set of positive self-adjoint bounded linear
operators on $\ms{X}$, $\Lsym^{++}(\ms{X})$ the set of \emph{strictly} positive self-adjoint bounded linear
operators on $\ms{X}$, and $\Ltr(\ms{X})$ as the set of trace-class operators on
$\ms{X}$.  Lastly, we denote the operator norm of $\mc{A} \in \mc{L}(\ms{X},
\ms{Y})$ by $\opnorm{\mc{A}}$.  For details on concepts from operator theory, 
see~\cite[Chapters VI--VIII]{reed1980methods}.

\textbf{The data model.}
The inversion model has a state variable $u \in \U$ and an inversion parameter
$m \in \M$, where $\M$ and $\U$ are infinite-dimensional real separable Hilbert
spaces.  The \emph{solution operator} is defined to be an element of $\mc{L}(\M,
\U)$ that maps the inversion parameter to the state that the sensor observes.

We assume that the \emph{experimental design} (the measurement
configuration) is parameterized by a design vector $ \vxi \in \R^{N}$. In traditional sensor 
placement problems,
this specifies the locations of a static sensor array; in our
setting, $\vxi$ instead determines the path followed by a mobile sensor (see
Subsection~\ref{sec:parameter_to_obs}). For $\vxi \in \R^{N}$, the design
induces an \emph{observation operator}
\begin{equation}
\label{eq:observation}
\mc{B}(\vxi) \in \mc{L}(\U, \R^{\ny}).
\end{equation}
Applying $\mc{B}(\vxi)$ to $u$ extracts $\ny$ discrete measurements of $u$ at locations specified by the design.
We compose the solution and observation operators to obtain the \emph{forward operator}
\begin{equation}
\mc{F}(\vxi) \defeq \mc{B}(\vxi) \circ \mc{S}, \quad \vxi \in \R^{N}.
\end{equation}
Note that $\mc{F} \in \mc{L}(\M, \R^{\ny})$ is of finite rank. 
Assuming an additive Gaussian error model and uncorrelated measurements, the
data model is
\begin{equation}
\label{eq:data_model}
\vec y = \mc{F}(\vxi) m + \vec\eta, \quad \vec\eta \sim \mc{N}(\vec 0, \sigma^{2}\mat I).
\end{equation}
 
We next specify the spaces $\M$ and $\U$ used in our formulations.
We denote the spatial domain of 
the inversion model by $\Om$, which we assume to be an open, bounded, and convex 
subset of $\R^2$. Furthermore, the 
temporal domain is a bounded time interval denoted by $T$. Subsequently, we assume that $m: \Om \to \R$ and $u: \Om \times T \to \R$. However, our formulations can be adapted to different setups: for example, we may consider a three-dimensional spatial domain or a time-dependent inversion parameter. 

In the present work, we let
$\M = \Lo$ and $\U = \Lot$. Generally, the solutions to the inversion model lie in a 
Sobelev space that is a subspace of $\U$.
Here, $\M$ is equipped with the inner product
\begin{equation}
\label{eq:Lo_inner_prod}
\langle f, g \rangle \defeq \int_\Om f(\vx) g(\vx) \, d\vx, \quad f,g \in \M,
\end{equation}
and induced norm $\|\cdot\|_{\M}\defeq\langle \cdot, \cdot \rangle^{1/2}$.
Similarly, the inner product on $\U$ is
\begin{equation}
\label{eq:Lot_inner_prod}
\llangle u, v \rrangle \defeq \int_{T}\int_{\Om} u(\vx, t) v(\vx, t) \, d\vx \, dt, \quad u,v \in \U,
\end{equation}
with induced norm $\|\cdot\|_{\U}\defeq\llangle \cdot, \cdot \rrangle^{1/2}$. 
In what follows, we will also require the Euclidean inner product of vectors $\vx$ and $\vy$. We denote this 
by $\langle \vx, \vy\rangle_{2} \defeq \vy^{\top} \vx$.

\textbf{Bayesian inversion.}\quad
In the Bayesian paradigm we treat the inversion parameter as an $\ms{M}$-valued
random variable and seek a posterior measure $\mu_{\po}^{\vec y}$ describing the
conditional distribution law of $m$ given data $\vec y$. We assume a Gaussian prior
measure $\mu_{\pr} \defeq \mc{N}(\mpr, \Cpr)$ for $m$.  Following the approach
in~\cite{Bui-ThanhGhattasMartinEtAl13}, we define the prior covariance operator
$\Cpr$ using an elliptic differential operator of the form 
\begin{equation}\label{eq:prior_operator}
\mc{E}
\defeq - a_{1} \Delta + a_{2} \mc{I},
\end{equation}
where $a_{1},a_{2} > 0$. Namely, we choose $\Cpr \defeq \mc{E}^{-2}$ ensuring
that $\Cpr \in \Lsym^{++}(\M) \cap \Ltr(\M)$ in two- and three-dimensional
computational domains.

By Bayes' theorem, the posterior measure $\mupoy$ is absolutely continuous with
respect to $\mu_{\pr}$ and its Radon--Nikodym derivative with respect 
to $\mu_{\pr}$ satisfies
\begin{equation}
\label{eq:Bayes_thm}
\frac{d\mupoy}{d\mu_{\pr}} \propto \exp \left\{ - \frac{1}{2\sig^{2}} \| \mc{F}(\vxi)m - \vec y \|_{2}^{2} \right\}.
\end{equation}
See~\cite{Stuart10} for further details.
Here, we have made the dependence of the posterior measure on 
$\vxi$ explicit.

Under the assumption of a linear model, Gaussian noise, and Gaussian prior, the posterior measure is Gaussian, with
$\mupoy \defeq \mc{N}\big( m_\MAP^{\vec y}(\vxi), \Cpo(\vxi) \big)$,
where both the mean and covariance admit analytic expressions~\cite{Stuart10}. Specifically,
\begin{equation}
\label{eq:post}
\Cpo(\vxi) \defeq \big(\sig^{-2}(\mc{F}^*\mc{F})(\vxi) + \Cpr^{-1}\big)^{-1} \quad \text{and} \quad m_\MAP^{\vec y}(\vxi) \defeq \Cpo \big(\sig^{-2}\mc{F}^*(\vxi)\vec y + \Cpr^{-1}\mpr\big).
\end{equation}
Here, we have also used the shorthand $(\mc{F}^*\mc{F})(\vxi)$ for $[\mc{F}(\vxi)]^*\mc{F}(\vxi)$. Note also that $\Cpo \in \Lsym^{++}(\ms{M}) \cap \Ltr(\ms{M})$. 

In the Gaussian linear setting, the posterior mean coincides with the maximum a
posteriori probability (MAP) point~\cite{DashtiStuart17}. This is the motivation
behind the notation for the posterior mean. The MAP point admits a
variational characterization. For a $\vxi \in \R^{N}$, it can be shown that
$m_\MAP^{\vec y}(\vxi)$ is the unique 
global minimizer of  
\begin{equation}
\label{eq:map_var}
J(m; \vxi) \defeq \frac{1}{2\sig^2} \|\mc{F}(\vxi) m - \vec y \|_2^2 + \frac{1}{2} \big\langle \Cpr^{-1/2}(m - \mpr), \Cpr^{-1/2}(m - \mpr)\big\rangle.
\end{equation}
Note that the domain of $J$ is the Cameron--Martin space $\M_\text{pr} \defeq \mathrm{Range}(\Cpr^{1/2})$.
The Hessian of $J$, denoted by $\mc{H}$, is an unbounded, self-adjoint, 
strictly positive, and densely defined operator. 
Two identities involving $\mc{H}$ that we require are
\begin{equation}
\label{eq:hessian}
\Cpo (\vxi) = \mc{H}^{-1} (\vxi) \quad \text{and} \quad \mc{H} (\vxi) = \iHmis (\vxi) + \Cpr^{-1},
\end{equation}
where $\iHmis (\vxi) = \sig^{-2}(\mc{F}^{*}\mc{F})(\vxi)$ is the Hessian of
the data misfit-term in~\eqref{eq:map_var}. 
We remark that, for each $\vxi$, $\iHmis (\vxi) \in \Lsym^{+}(\M)$ and is of finite
rank. Additionally, $\Cpr^{-1}$ is a densely defined, unbounded, self-adjoint,
and strictly positive operator.

The posterior covariance operator $\Cpo$ and its inverse $\mc H$ are central to
the OED formulation that follows. Specifically, $\Cpo$ encodes posterior uncertainty in the inversion parameter and depends on the
design vector $\vxi$ through the data-misfit Hessian. Optimality criteria are expressed in terms of $\Cpo$, while gradient derivations and computational methods require manipulation of $\mc H$.

\section{The path-OED problem}
\label{sec:OED}

As discussed in Section~\ref{sec:intro}, the OED problem is given by~\eqref{eq:OED_intro}. Herein, we specialize this abstract formulation by taking the design to be the trajectory of a single mobile sensor, parameterized by the \emph{design vector} $\vxi \in \R^{N}$. The set $\Xi$ characterizes the admissible sensor paths and solving~\eqref{eq:OED_intro} yields an optimal path.

In Subsection~\ref{sec:parameter_to_obs} we derive the observation operator
in~\eqref{eq:observation} that collects data along a prescribed path. To
elaborate our framework, we take $\Psi$ to be a c-optimality
criterion~\cite{Chaloner95}, expressed in terms of the posterior variance of a
linear functional of the inversion parameter; see Subsection~\ref{sec:copt}. We
also derive the gradient of $\Psi$ with respect to the design vector $\vxi$ in
Subsection~\ref{sec:copt_deriv}, enabling gradient-based optimization. 
To facilitate this, there, we prove a more general result regarding  the derivative of
a parameterized covariance-like operator.
Lastly,
Section~\ref{sec:copt_specific} specifies the particular c-optimality criterion
used in our numerical demonstrations.

\subsection{Data acquisition with mobile sensors}
\label{sec:parameter_to_obs}
Here, we derive an observation operator $\mc{B}$ that maps $u \in \U$ to
measurements along a specified path. We construct $\mc{B}$ in  two stages. First,
we impose paths with a structure via parameterization.  Second, we obtain
measurements by locally averaging $u$ in neighborhoods about points on the path.  
This is necessary because the solution to the inversion model lies in a Sobolev
space and is not necessarily defined pointwise. Having discussed this
formulation, we present the properties of $\mc{B}$ in~Theorem~\ref{thm:obs}.

We consider parameterized paths of the form $\vr = \vr(t; \vxi)$, 
\begin{equation}
\label{eq:r}
\vr : \Ty \times \R^{N} \to \Om,
\end{equation}
with $\Ty \subset T$.
We call  $\Ty$ the \emph{inversion interval}. Given a \emph{path parameter}
$\vxi \in \R^{N}$, the parametrization $\vr(t; \vxi)$ describes the position of
the sensor over the inversion interval. We assume that the partial derivatives
of $\vr$ with respect to the path parameter exist and are continuous. Additionally, we assume
the sensor performs $\ny$ measurements of $u = \mc{S}m$ over 
$\Ty$. Note that the choices of $\Ty$ and $\ny$ are problem dependent.

Let $\{t_{k}\}_{k=1}^{\ny} \subset \Ty$ be a set of measurement times and $\{\vr_k\}_{k=1}^{\ny} \defeq
\{\vr(t_{k}; \vxi)\}_{k=1}^{\ny}$ be corresponding measurement locations on the path $\vr$. For $k \in \{1, 2, \dots, \ny\}$, we consider the mollifier functions
\begin{equation}
\label{eq:mollifiers}
\begin{alignedat}{2}
\dr(\vec x, t)(\vxi) &\defeq (2\pi\eps_{\vx})^{-1}\exp\left\{-\frac{\|\vec x - \vr(t; \vxi)\|^{2}_{2}}{2\eps_{\vx}}\right\}; \quad\text{and}\\
\tau_{k}(t) &\defeq \big(\sqrt{2\pi\eps_{t}}\big)^{-1}\exp\left\{-\frac{(t-t_{k})^{2}}{2\eps_{t}}\right\}.
\end{alignedat}
\end{equation}
The constants $\vep_{\vx},\vep_{t} > 0$ control the scaling of the mollifiers. As these numbers tend towards zero, the mollifiers weakly converge to the delta Dirac distribution.
For $\vxi \in \R^{N}$, the action of the observation operator $\mc{B}(\vxi)$ on $u \in \U$ produces a vector in $\R^{\ny}$; we define its $k$th component by
\begin{equation}
\label{equ:observation_operator_def}
\left(\mc{B}(\vxi)u\right)_{k} \defeq \llangle \tau_{k} \dr(\vxi), u \rrangle, \quad k \in \{1, \ldots, \ny\}.
\end{equation}
The following result establishes the key properties of this observation 
operator. 
\begin{theorem}
\label{thm:obs}
The observation operator $\mc{B}$ defined in~\eqref{equ:observation_operator_def} 
satisfies the following properties:
\begin{enumerate}
\item if $u \in C(\Om \times T)$, then
\begin{equation}
\underset{(\eps_{\vx}, \eps_t) \to (0, 0)}{\text{lim}} \left(\mc{B}(\vxi)u\right)_k = u(\vr_k(\vxi), t_k);
\end{equation}
\item $\mc{B}(\vxi) \in \mc{L}(\U; \R^{\ny})$, for all $\vxi \in \R^{N}$;
\item $\mc{B}(\cdot): \R^{N} \to \mc{L}(\U; \R^{\ny})$ is continuous in operator norm; and
\item the adjoint of $\mc{B}$, denoted by $\mc{B}^*$, satisfies 
\begin{equation}
\mc{B}^{*}(\vxi)\vec y\defeq \dr(\vxi) \sum_{k=1}^{\ny}\tau_{k}y_{k}, \quad \vec y \in \R^{\ny}.
\end{equation}
\end{enumerate}
\end{theorem}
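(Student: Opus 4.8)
The plan is to establish the four properties in order, since each one relies on the structure of the mollifiers in~\eqref{eq:mollifiers} and on elementary facts about $L^2$ spaces. Throughout, write $w_k(\vx,t;\vxi) \defeq \tau_k(t)\,\dr(\vx,t)(\vxi)$, so that $\left(\mc{B}(\vxi)u\right)_k = \llangle w_k(\vxi), u\rrangle$. The first observation is that each $w_k(\cdot,\cdot;\vxi)$ lies in $\U = \Lot$: it is a product of two bounded Gaussian factors on the bounded domain $\Om \times T$, hence bounded and measurable, hence square-integrable. This single fact will carry most of the argument.

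\textbf{Property (1).} For continuous $u$, I would argue that as $(\eps_\vx,\eps_t)\to(0,0)$ the kernels $w_k(\cdot,\cdot;\vxi)$ act as an approximate identity concentrating at $(\vr_k(\vxi), t_k)$. The space integral of $\dr$ over $\R^2$ is $1$ (it is a normalized Gaussian), and over the bounded convex set $\Om$ it tends to $1$ as long as $\vr_k(\vxi)$ is an interior point; similarly $\int_T \tau_k \to 1$. The standard mollification estimate then gives $\llangle w_k(\vxi), u\rrangle \to u(\vr_k(\vxi), t_k)$: split $u = u(\vr_k,t_k) + (u - u(\vr_k,t_k))$, use that the mass of $w_k$ outside any fixed neighborhood of $(\vr_k, t_k)$ vanishes, and invoke continuity of $u$ on the compact closure $\overline{\Om\times T}$ for uniform control of the remainder. (One should note the mild caveat that this holds when $\vr_k(\vxi)\in\Om$; if $\vr_k(\vxi)\in\partial\Om$ the limit picks up a factor from the fraction of the Gaussian mass inside $\Om$ — presumably the authors either assume interior points or the statement is understood in that sense.)

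\textbf{Properties (2) and (4).} These are the easiest and I would do them together. Since $w_k(\vxi)\in\U$, the map $u \mapsto \llangle w_k(\vxi), u\rrangle$ is a bounded linear functional on $\U$ by Cauchy--Schwarz, with norm $\|w_k(\vxi)\|_\U$; assembling the $\ny$ components shows $\mc{B}(\vxi)\in\mc{L}(\U;\R^{\ny})$ with $\opnorm{\mc{B}(\vxi)}^2 \le \sum_{k=1}^{\ny}\|w_k(\vxi)\|_\U^2$. For the adjoint, for any $u\in\U$ and $\vec y\in\R^{\ny}$ we compute
\begin{equation*}
\langle \mc{B}(\vxi)u, \vec y\rangle_2 = \sum_{k=1}^{\ny} y_k \llangle w_k(\vxi), u\rrangle = \Bllangle \sum_{k=1}^{\ny} y_k w_k(\vxi),\, u\Brrangle,
\end{equation*}
and by definition of the adjoint the left slot must equal $\llangle \mc{B}^*(\vxi)\vec y, u\rrangle$; since this holds for all $u$, we get $\mc{B}^*(\vxi)\vec y = \sum_{k=1}^{\ny} y_k w_k(\vxi) = \dr(\vxi)\sum_{k=1}^{\ny}\tau_k y_k$, as claimed.

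\textbf{Property (3).} This is where the real work is, and I expect it to be the main obstacle. The goal is $\opnorm{\mc{B}(\vxi) - \mc{B}(\vxi')} \to 0$ as $\vxi' \to \vxi$. Using the norm bound above applied to the difference operator, it suffices to show $\|w_k(\vxi) - w_k(\vxi')\|_\U \to 0$ for each $k$, i.e. continuity of $\vxi \mapsto w_k(\vxi)$ as a map $\R^N \to \Lot$. Since $\tau_k$ does not depend on $\vxi$, this reduces to showing $\vxi \mapsto \dr(\vxi)$ is continuous from $\R^N$ into $\Lot$ (or into $\Lo$, by Fubini and the boundedness of $\tau_k$). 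Here I would use the hypothesis that $\vr(t;\cdot)$ has continuous partial derivatives — in particular $\vr$ is continuous in $\vxi$, uniformly in $t\in\overline{\Ty}$ by compactness — together with the fact that the Gaussian $\vx \mapsto (2\pi\eps_\vx)^{-1}\exp(-\|\vx - \vec c\|_2^2/2\eps_\vx)$ depends continuously on its center $\vec c$ in the $L^2(\Om)$ norm. The cleanest route is dominated convergence: for $\vxi'\to\vxi$, the integrand $|\dr(\vx,t)(\vxi) - \dr(\vx,t)(\vxi')|^2$ converges pointwise to $0$ (continuity of the Gaussian in its center and of $\vr$ in $\vxi$) and is dominated by the fixed integrable bound $(2(2\pi\eps_\vx)^{-1})^2$ on the finite-measure domain $\Om\times\Ty$; hence the $\U$-norm of the difference tends to $0$. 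Integrating out the $t_k$-Gaussian factor and summing over $k$ then yields operator-norm continuity of $\mc{B}(\cdot)$.
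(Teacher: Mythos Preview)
Your proposal is correct and follows essentially the same route as the paper's proof: Cauchy--Schwarz for boundedness, a direct inner-product computation for the adjoint, and dominated convergence (via the uniform Gaussian bound) for operator-norm continuity of $\vxi \mapsto \mc{B}(\vxi)$. Your treatment of Item~1 is in fact more detailed than the paper's, which simply cites the mollifier convergence theorem in Brezis, and your caveat about boundary points is a fair observation that the paper leaves implicit.
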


\begin{proof}
See Appendix~\ref{appdx:path}.
\end{proof}

\begin{remark}
\label{rmk:moll}
Henceforth, the terms \emph{design vector} and \emph{path parameter} are
interchangeable. Also, in the present setting, 
the observation operator depends nonlinearly on the design vector $\vxi$. 
Subsequently, the resulting design criteria are nonlinear and nonconvex 
functions of $\vxi$.
\end{remark}

\subsection{The design criterion}
\label{sec:copt}

We define the design criterion as the posterior variance of a 
bounded linear functional $f:\ms{M} \to \R$. 
Namely,
\begin{equation}
\label{eq:c_opt_def}
\Psi \defeq \mV_{\mupoy}\{f\}.
\end{equation}
This is motivated by a \emph{goal-oriented}
formulation~\cite{attia2018goal,ButlerJakemanWildey20,neuberger2025goal},
where we seek designs that minimize the uncertainty in a prediction quantity
that is defined as a functional of the inversion parameter.  This is in contrast
to \emph{classical} design criteria, which quantify uncertainty in the inversion
parameter itself. 

By the Riesz representation theorem, there exists a unique $c \in \M$ such that
$f(m) = \langle c, m \rangle$, for all $m \in \ms{M}$.
Recall also that $\mupoy$ is
a Gaussian measure with mean and covariance specified in~\eqref{eq:post}. 
Thus,
$\mV_{\mupoy}\{f\} =\langle \Cpo c, c\rangle$,
which follows directly from the definition of the posterior covariance operator; 
see, e.g.,~\cite{Alexanderian21}.
Since $\Cpo$ is a function of 
the design vector, we have 
\begin{equation}
\label{eq:c_optimal}
\Psi(\vxi) = \langle \Cpo(\vxi) c, c \rangle.
\end{equation}
This is a Bayesian \emph{c-optimality criterion}---a terminology rooted in traditional
statistics literature.  Specifically, in that context, one considers the
posterior variance of a linear combination of finitely many scalar-valued
parameters. Hence,~\eqref{eq:c_opt_def} defines an infinite-dimensional
c-optimality criterion.  In Subsection~\ref{sec:copt_specific} we provide a
specific example of a c-optimality criterion used in our numerical demonstrations.

\subsection{The derivatives of $\Psi$}\label{sec:copt_deriv}
The main results of this subsection are Theorem~\ref{thm:Cpo_deriv} and 
Theorem~\ref{thm:Cpo_partial}. These results provide formulas for the derivatives of
a parameterized covariance-like operator and the path-dependent posterior
covariance operator.  In particular, Theorem~\ref{thm:Cpo_partial} is essential
for computing the gradient of $\Psi$ (see Corollary~\ref{lem:c_opt_derivs}) as
well as other choices of path-dependent design criteria.

For $\xi \in \R$, let $\mc{H}_{0}(\xi) \in \Lsym^{+}(\M)$ and $\mc{C}_{0}(\xi)
\in \Lsym^{++}(\M)$, where $\mc{C}_0$ is not assumed to be surjective.
Hence, $\mc{C}_0^{-1}$ an operator from the range of
$\mc{C}_0$ to $\M$ and possibly unbounded. 
In the case where $\mc{C}_0$ is a prior covariance operator, 
which is trace-class, 
the range of $\mc{C}_0$ is a dense subspace of $\M$ and $\mc{C}_0^{-1}$ is unbounded.
We then consider the operator
\begin{equation}\label{eq:generic_H}
\mc{H}(\xi) \defeq \mc{H}_0(\xi) + \mc{C}_0^{-1}.
\end{equation}
Note that $\mc{H}$ is a self-adjoint, strictly positive, and a possibly unbounded
operator defined on the range of $\mc{C}_{0}$. It can be shown that the inverse
$\mc{C} \defeq \mc{H}^{-1}$ exists and for every $\xi \in \R$, belongs to
$\Lsym^{++}(\M)$. Here, $\mc{C}$, is an analogue of
the posterior covariance operator in~\eqref{eq:post} with some assumptions relaxed.

We say that $\mc{C}: \R \to \Lsym^{++}(\M)$ is Frech\'{e}t differentiable at $\xi$ if there
exists $\mc{A}: \R \to \mc{L}(\M)$ such that
\begin{equation}
\label{eq:frechet}
\underset{h \to 0}{\text{lim}} \ \opnorm{\frac{\mc{C}(\xi + h) - \mc{C}(\xi)}{h} - \mc{A}(\xi)} = 0.
\end{equation}
In this case, we write 
\begin{equation}
\label{eq:C_dot}
\bdot{\mc{C}}(\xi) \defeq \mc{A} = \underset{h \to 0}{\text{lim}} \frac{1}{h}\left(\mc{C}(\xi + h) - \mc{C}(\xi)\right),
\end{equation}
where the dot above $\mc{C}$ denotes derivative with respect to $\xi$. We use this notion of differentiability in Theorem~\ref{thm:Cpo_deriv}---providing us with a formula for $\bdot{\mc{C}}$.

\begin{theorem}
\label{thm:Cpo_deriv}
Consider the operator $\mc{C} = \mc{H}^{-1}$ with $\mc{H}$ as defined in~\eqref{eq:generic_H}. If $\mc{H}_{0}$ is Frech\'{e}t differentiable in $\R$, then
\begin{equation}
\label{eq:Cpo_deriv}
\bdot{\mc{C}} \equiv -\mc{C} \bdot{\mc{H}}_0 \mc{C}.
\end{equation}
\end{theorem}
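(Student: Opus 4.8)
The plan is to mimic the finite-dimensional identity $\frac{d}{d\xi}\big(\mc{A}^{-1}\big) = -\,\mc{A}^{-1}\big(\tfrac{d}{d\xi}\mc{A}\big)\mc{A}^{-1}$, while tracking the fact that $\mc{H}$ is only densely defined and that the $\xi$-dependence sits entirely in the bounded summand $\mc{H}_0$. Fix $\xi \in \R$ and $h \neq 0$. The starting point is the resolvent-type identity
\begin{equation*}
\mc{C}(\xi+h) - \mc{C}(\xi) \;=\; -\,\mc{C}(\xi+h)\big[\mc{H}_0(\xi+h) - \mc{H}_0(\xi)\big]\,\mc{C}(\xi),
\end{equation*}
obtained by writing $\mc{C}(\xi+h) = \mc{C}(\xi+h)\mc{H}(\xi)\mc{C}(\xi)$ and $\mc{C}(\xi) = \mc{C}(\xi+h)\mc{H}(\xi+h)\mc{C}(\xi)$, subtracting, and noting that $\mc{H}(\xi+h) - \mc{H}(\xi) = \mc{H}_0(\xi+h) - \mc{H}_0(\xi)$ since $\mc{C}_0^{-1}$ does not depend on $\xi$. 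To make this rigorous I would track domains: because $\mathrm{Range}(\mc{C}(\xi)) = \mathrm{dom}(\mc{H}(\xi)) = \mathrm{Range}(\mc{C}_0) = \mathrm{dom}(\mc{H}(\xi+h))$, the operator $\mc{C}(\xi)$ maps $\M$ into the common domain of $\mc{H}(\xi)$ and $\mc{H}(\xi+h)$, so $[\mc{H}(\xi) - \mc{H}(\xi+h)]\mc{C}(\xi) = [\mc{H}_0(\xi) - \mc{H}_0(\xi+h)]\mc{C}(\xi)$ is a genuine bounded operator on $\M$, and composing on the left with the bounded operator $\mc{C}(\xi+h)$ yields a bounded operator on $\M$; the two "insertion" identities follow from $\mc{H}(\xi)\mc{C}(\xi) = \mc{I}$ on $\M$ and $\mc{C}(\xi+h)\mc{H}(\xi+h) = \mc{I}$ on $\mathrm{Range}(\mc{C}_0) \supseteq \mathrm{Range}(\mc{C}(\xi))$.

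Next I would establish a uniform bound $\opnorm{\mc{C}(\xi+h)} \le \opnorm{\mc{C}_0}$ valid for all $h$. Since $\mc{H}_0(\xi+h) \in \Lsym^{+}(\M)$, the quadratic form of $\mc{H}(\xi+h) = \mc{H}_0(\xi+h) + \mc{C}_0^{-1}$ dominates that of $\mc{C}_0^{-1}$ on $\mathrm{Range}(\mc{C}_0)$; operator monotonicity of the inverse then gives $\mc{C}(\xi+h) \preceq \mc{C}_0$ in the Loewner order, hence the norm bound. Feeding this into the identity above, together with continuity of $\mc{H}_0$ at $\xi$ (a consequence of its differentiability there), yields $\opnorm{\mc{C}(\xi+h) - \mc{C}(\xi)} \le \opnorm{\mc{C}_0}\,\opnorm{\mc{C}(\xi)}\,\opnorm{\mc{H}_0(\xi+h) - \mc{H}_0(\xi)} \to 0$ as $h \to 0$; that is, $\mc{C}$ is continuous in operator norm. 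The uniform bound is what breaks the apparent circularity, since it does not itself rely on continuity of $\mc{C}$.

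Finally I would divide the identity by $h$ and subtract the claimed derivative, writing
\begin{align*}
\frac{\mc{C}(\xi+h) - \mc{C}(\xi)}{h} + \mc{C}(\xi)\bdot{\mc{H}}_0(\xi)\mc{C}(\xi)
={}& -\,\mc{C}(\xi+h)\Big[\tfrac{\mc{H}_0(\xi+h) - \mc{H}_0(\xi)}{h} - \bdot{\mc{H}}_0(\xi)\Big]\mc{C}(\xi) \\
&\;-\,\big[\mc{C}(\xi+h) - \mc{C}(\xi)\big]\,\bdot{\mc{H}}_0(\xi)\,\mc{C}(\xi).
\end{align*}
The first term on the right has operator norm at most $\opnorm{\mc{C}_0}\,\opnorm{\mc{C}(\xi)}$ times the Fréchet difference quotient of $\mc{H}_0$ at $\xi$, which tends to $0$ by hypothesis; the second has operator norm at most $\opnorm{\mc{C}(\xi+h) - \mc{C}(\xi)}\,\opnorm{\bdot{\mc{H}}_0(\xi)}\,\opnorm{\mc{C}(\xi)}$, which tends to $0$ by the operator-norm continuity just shown. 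Hence the left-hand side tends to $0$ in operator norm, which is precisely the assertion that $\mc{C}$ is Fréchet differentiable at $\xi$ with $\bdot{\mc{C}}(\xi) = -\mc{C}(\xi)\bdot{\mc{H}}_0(\xi)\mc{C}(\xi)$; since $\xi \in \R$ was arbitrary, $\bdot{\mc{C}} \equiv -\mc{C}\bdot{\mc{H}}_0\mc{C}$. I expect the first paragraph to be the main obstacle: the algebraic identity is trivial when all operators are bounded, but here it requires careful bookkeeping of the domain $\mathrm{Range}(\mc{C}_0)$ and of the mapping properties of $\mc{C}(\xi)$, $\mc{C}(\xi+h)$, and $\mc{H}_0(\xi+h) - \mc{H}_0(\xi)$; once that is settled, the remaining estimates are routine.
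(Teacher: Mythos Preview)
Your proposal is correct and follows essentially the same architecture as the paper's proof: resolvent identity, a uniform bound $\opnorm{\mc{C}(\xi+h)}\le\opnorm{\mc{C}_0}$ to break the circularity, continuity of $\mc{C}$, and then passage to the limit in the difference quotient. The only noteworthy difference is in how the uniform bound is obtained: the paper factors $\mc{C}(\xi+h)=\mc{C}_0^{1/2}(\mc{C}_0^{1/2}\mc{H}_0(\xi+h)\mc{C}_0^{1/2}+\mc{I})^{-1}\mc{C}_0^{1/2}$ and reads off $\opnorm{(\tilde{\mc{H}}_0+\mc{I})^{-1}}\le 1$ from the spectrum, whereas you invoke operator monotonicity of the inverse in the Loewner order; both are standard and yield the same inequality, and your final add--subtract estimate is in fact a bit cleaner than the paper's product-rule manipulation of $\mc{C}\mc{H}\equiv\mc{I}$ followed by right-multiplication by $\mc{H}^{-1}$.
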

\begin{proof}
See Appendix~\ref{appdx:Cpo_deriv}.
\end{proof}
\noindent
The proof of Theorem~\ref{thm:Cpo_deriv} is insightful and requires careful
treatment of linear operators in Hilbert space. 

We also need a notion of differentiability for parameterized elements of the solution space $\U$. Suppose that $u:\R \to \U$. Similar to the operator setting, we say that $u(\xi) \in \U$ is Frech\'{e}t differentiable at $\xi$ if there exits $a \in \U$ such that
\begin{equation}
\label{eq:frechet_hilbert}
\underset{h \to 0}{\text{lim}} \Big\| \frac{u(\xi + h) - u(\xi)}{h} - a \Big\|_{\ms{U}} = 0.
\end{equation}
In this case, we write
\begin{equation}
\label{eq:u_dot}
\bdot{u} = a = \underset{h \to 0}{\text{lim}} \frac{1}{h}\left( u(\xi + h) - u(\xi) \right).
\end{equation}

Recall that the tensor product of $u,v \in \U$, 
denoted by $u \otimes v$, is an element of $\mc{L}(\U)$, defined by 
$(u \otimes v)w = u \llangle v, w \rrangle$, for all $w \in \U$.
To find the derivatives of $\Cpo$, we need to differentiate through such tensor product
operators. This is facilitated by the following lemma:
\begin{lemma}
\label{lem:tens_prod_deriv}
Suppose that $u,v:\R \to \U$ are Frech\'{e}t differentiable in $\R$ and define the operator $\mc{K}:\R \to \mc{L}(\U)$ by $\mc{K}(\xi) = (u \otimes v)(\xi).$ Then, $\bdot{\mc{K}} \equiv u \otimes \bdot{v} + \bdot{u} \otimes v.$

\end{lemma}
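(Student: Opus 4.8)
The plan is to verify the Fréchet differentiability condition \eqref{eq:frechet} directly for $\mc{K}(\xi) = (u \otimes v)(\xi)$, with candidate derivative $\mc{A}(\xi) \defeq (u \otimes \bdot{v})(\xi) + (\bdot{u} \otimes v)(\xi)$, which lies in $\mc{L}(\U)$ since each tensor product of elements of $\U$ is a bounded rank-one operator. First I would form the difference quotient and add and subtract a cross term:
\begin{equation*}
\frac{\mc{K}(\xi+h) - \mc{K}(\xi)}{h} = u(\xi+h) \otimes \frac{v(\xi+h) - v(\xi)}{h} + \frac{u(\xi+h) - u(\xi)}{h} \otimes v(\xi),
\end{equation*}
using bilinearity of the tensor product in the sense that $(a \otimes b) - (c \otimes d) = a \otimes (b - d) + (a - c) \otimes d$. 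Subtracting $\mc{A}(\xi)$ then groups naturally into
\begin{equation*}
u(\xi+h) \otimes \Big(\tfrac{v(\xi+h)-v(\xi)}{h} - \bdot{v}(\xi)\Big) + \big(u(\xi+h) - u(\xi)\big) \otimes \bdot{v}(\xi) + \Big(\tfrac{u(\xi+h)-u(\xi)}{h} - \bdot{u}(\xi)\Big) \otimes v(\xi).
\end{equation*}

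Next I would bound the operator norm of each of the three terms. The key elementary estimate is that for $a, b \in \U$ one has $\opnorm{a \otimes b} = \|a\|_{\U}\|b\|_{\U}$ (or at least $\le \|a\|_{\U}\|b\|_{\U}$, which suffices), since $\|(a \otimes b)w\|_{\U} = \|a\|_{\U}|\llangle b, w\rrangle| \le \|a\|_{\U}\|b\|_{\U}\|w\|_{\U}$ by Cauchy--Schwarz. Applying this: the first term is bounded by $\|u(\xi+h)\|_{\U}\,\big\|\tfrac{v(\xi+h)-v(\xi)}{h} - \bdot{v}(\xi)\big\|_{\U}$, which tends to $0$ as $h \to 0$ because $\|u(\xi+h)\|_{\U}$ is bounded near $\xi$ (continuity of $u$, which follows from its differentiability) and the second factor vanishes by \eqref{eq:frechet_hilbert}. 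The second term is bounded by $\|u(\xi+h) - u(\xi)\|_{\U}\,\|\bdot{v}(\xi)\|_{\U}$, which tends to $0$ since $\bdot{v}(\xi)$ is fixed and $u$ is continuous at $\xi$. The third term is bounded by $\big\|\tfrac{u(\xi+h)-u(\xi)}{h} - \bdot{u}(\xi)\big\|_{\U}\,\|v(\xi)\|_{\U}$, which tends to $0$ directly by \eqref{eq:frechet_hilbert} applied to $u$. By the triangle inequality in operator norm the whole expression tends to $0$, which is exactly the statement that $\bdot{\mc{K}}(\xi) = \mc{A}(\xi)$.

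I do not expect any serious obstacle here; this is a routine product-rule argument transplanted to the Hilbert-space operator setting. The only points requiring a small amount of care are (i) recording the identity $\opnorm{a \otimes b} \le \|a\|_{\U}\|b\|_{\U}$ so that operator-norm bounds reduce to $\U$-norm bounds, and (ii) noting that Fréchet differentiability of $u$ and $v$ implies their continuity, so that $\|u(\xi+h)\|_{\U}$ stays bounded on a neighborhood of $\xi$ and $\|u(\xi+h) - u(\xi)\|_{\U} \to 0$. With these in hand the three-term split above closes the proof immediately.
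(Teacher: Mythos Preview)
Your proposal is correct and follows essentially the same approach as the paper: the identical add-and-subtract decomposition of the difference quotient into $u(\xi+h)\otimes\tfrac{v(\xi+h)-v(\xi)}{h} + \tfrac{u(\xi+h)-u(\xi)}{h}\otimes v(\xi)$, followed by passage to the limit using continuity of $u$. Your version is in fact more careful than the paper's, which simply writes the limit formally and remarks that continuity of $u$ and $v$ was used; you explicitly verify operator-norm convergence via the bound $\opnorm{a\otimes b}\le \|a\|_{\U}\|b\|_{\U}$ and the three-term split, which is the rigorous content the paper leaves implicit.
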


\begin{proof}
We note that
\begin{align*}
\underset{h \to 0}{\text{lim}} \frac{1}{h} \left( \mc{K}(\xi + h)  - \mc{K}(\xi)\right) &= \underset{h \to 0}{\text{lim}} \frac{1}{h} \left( u(\xi + h) \otimes v(\xi + h) - u(\xi) \otimes v(\xi) \right)\\
&= \underset{h \to 0}{\text{lim}} \frac{1}{h} u(\xi + h) \otimes (v(\xi + h) - v(\xi)) + \underset{h \to 0}{\text{lim}} \frac{1}{h} (u(\xi + h) - u(\xi)) \otimes v(\xi)\\
&= u \otimes \bdot{v} + \bdot{u} \otimes v.
\end{align*}
Here, we have also used the fact that
$u$ and $v$ are continuous in $\xi$. 
\end{proof}
Now we have the tools needed to derive the derivatives of $\Cpo(\vxi)$ as stated in Theorem~\ref{thm:Cpo_partial}.

\begin{theorem}
\label{thm:Cpo_partial}
Consider the posterior covariance operator $\Cpo$ defined in~\eqref{eq:post}. Then, for $\vxi \in \R^{N}$ and 
$j \in \{1, \ldots, N\}$,

\begin{subequations}
\begin{align}
\pj \Cpo(\vxi) &= - \Cpo(\vxi) \big(\pj\mc{H}(\vxi)\big)\Cpo(\vxi),\label{eq:Cpo_j}\\
\pj\mc{H}(\vxi) &= -\sig^{-2}\mc{S}^{*}\Big(\sum_{k=1}^{\ny} \tau_{k}\dr(\vxi) \otimes \tau_{k}\pj\dr(\vxi) + \tau_{k}\pj\dr(\vxi) \otimes \tau_{k}\dr(\vxi)\Big)\mc{S},\label{eq:H_partial_big}
\end{align}
\end{subequations}
where we define $\pj \defeq \frac{\partial}{\partial \xi_j}$.

\end{theorem}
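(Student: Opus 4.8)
The plan is to obtain \eqref{eq:Cpo_j} as a special case of Theorem~\ref{thm:Cpo_deriv} and then to compute $\pj\mc{H}$ explicitly from the structure of the observation operator together with Lemma~\ref{lem:tens_prod_deriv}. Throughout, $\pj$ denotes differentiation in the single real variable $\xi_j$ with the remaining components of $\vxi$ held fixed, so the one-parameter notions of Fr\'echet differentiability introduced above apply verbatim with $\xi = \xi_j$.

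First I would match the abstract template \eqref{eq:generic_H} to the present setting by taking $\mc{H}_{0} := \iHmis$ and $\mc{C}_{0} := \Cpr$, so that $\mc{H} = \iHmis + \Cpr^{-1}$ as in \eqref{eq:hessian} and $\mc{C} = \mc{H}^{-1} = \Cpo$. The hypotheses of Theorem~\ref{thm:Cpo_deriv} require $\iHmis(\vxi) \in \Lsym^{+}(\M)$ and $\Cpr \in \Lsym^{++}(\M)$, both of which hold by construction, together with operator-norm Fr\'echet differentiability of $\xi_j \mapsto \iHmis(\vxi)$, which I verify in the last step. Granting this, Theorem~\ref{thm:Cpo_deriv} gives $\pj\Cpo = -\Cpo(\pj\iHmis)\Cpo$, and since $\Cpr^{-1}$ is independent of $\vxi$ we have $\pj\mc{H} = \pj\iHmis$, which is exactly \eqref{eq:Cpo_j}.

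Next I would compute $\pj\mc{H} = \pj\iHmis$. Using $\iHmis(\vxi) = \sig^{-2}(\mc{F}^{*}\mc{F})(\vxi)$ and $\mc{F}(\vxi) = \mc{B}(\vxi)\circ\mc{S}$, write $\iHmis(\vxi) = \sig^{-2}\mc{S}^{*}\big(\mc{B}^{*}(\vxi)\mc{B}(\vxi)\big)\mc{S}$. Combining the definition \eqref{equ:observation_operator_def} of $\mc{B}$ with the adjoint formula in part~(4) of Theorem~\ref{thm:obs}, a short computation gives the rank-$\ny$ representation
\[
\mc{B}^{*}(\vxi)\mc{B}(\vxi) = \sum_{k=1}^{\ny}\big(\tau_{k}\dr(\vxi)\big)\otimes\big(\tau_{k}\dr(\vxi)\big) \in \mc{L}(\U).
\]
Since $\mc{S}$ and $\mc{S}^{*}$ are fixed, $\pj\iHmis = \sig^{-2}\mc{S}^{*}\big(\pj(\mc{B}^{*}\mc{B})(\vxi)\big)\mc{S}$, so the task reduces to differentiating $\mc{B}^{*}\mc{B}$. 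For that, I would first show the $\U$-valued map $\xi_j \mapsto \tau_{k}\dr(\vxi)$ is Fr\'echet differentiable in the sense of \eqref{eq:frechet_hilbert}, with derivative $\tau_{k}\pj\dr(\vxi)$; this follows from the explicit Gaussian form of the mollifier in \eqref{eq:mollifiers}, the assumed continuity of the partial derivatives of $\vr$ in $\vxi$, and a dominated-convergence argument over the bounded set $\Om\times T$ that justifies differentiating under the integral defining $\|\cdot\|_{\U}$. Applying Lemma~\ref{lem:tens_prod_deriv} termwise, with $u = v = \tau_{k}\dr(\vxi)$ in the $k$th summand, then yields
\[
\pj\big(\mc{B}^{*}(\vxi)\mc{B}(\vxi)\big) = \sum_{k=1}^{\ny}\Big(\tau_{k}\dr(\vxi)\otimes\tau_{k}\pj\dr(\vxi) + \tau_{k}\pj\dr(\vxi)\otimes\tau_{k}\dr(\vxi)\Big),
\]
and pre- and post-composing with $\mc{S}^{*}$ and $\mc{S}$ and multiplying by $\sig^{-2}$ gives \eqref{eq:H_partial_big}. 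Since a finite sum of tensor products of $\U$-Fr\'echet-differentiable maps is operator-norm Fr\'echet differentiable, $\iHmis$ is as well, closing the gap left in the first step.

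I expect the main obstacle to be the regularity bookkeeping in the third step: establishing that $\xi_j \mapsto \tau_{k}\dr(\vxi)$ is genuinely Fr\'echet differentiable in the $L^{2}(\Om\times T)$ norm rather than merely pointwise almost everywhere, so that Theorem~\ref{thm:Cpo_deriv} and Lemma~\ref{lem:tens_prod_deriv} apply as stated. This amounts to a uniform-integrability estimate on the Gaussian mollifier and its $\xi_j$-derivative on the bounded domain $\Om\times T$; it is routine, but it is the one place where care is required. Everything else is algebraic---matching \eqref{eq:generic_H}, expanding $\mc{B}^{*}\mc{B}$ into rank-one pieces, and applying Lemma~\ref{lem:tens_prod_deriv} summand by summand.
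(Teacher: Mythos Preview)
Your proposal is correct and follows essentially the same route as the paper: invoke Theorem~\ref{thm:Cpo_deriv} with $\mc{H}_0=\iHmis$, $\mc{C}_0=\Cpr$ to obtain~\eqref{eq:Cpo_j}, write $\mc{B}^{*}\mc{B}$ as a sum of rank-one tensor products via the adjoint formula in Theorem~\ref{thm:obs}, and differentiate termwise with Lemma~\ref{lem:tens_prod_deriv}. The only difference is that you are more explicit about verifying the $\U$-norm Fr\'echet differentiability of $\xi_j\mapsto\tau_k\dr(\vxi)$ needed to feed both results, which the paper leaves implicit.
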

\begin{proof}
Viewing $\Cpo$ as a function of the $j$th component of $\vxi$, we
apply Theorem~\ref{thm:Cpo_deriv} to obtain~\eqref{eq:Cpo_j}. We are then
left to derive an explicit representation for $\pj\mc{H}$.
The design vector appears in the operator $(\mc{B}^{*}\mc{B})(\vxi) \defeq [\mc{B}(\vxi)]^{*}\mc{B}(\vxi)$ and the solution operator $\mc{S}$ is independent of $\vxi$. Hence, the derivative of the Hessian is
\begin{equation}
\label{eq:Hj_BB}
\pj\mc{H}(\vxi) = \sig^{-2} \mc{S}^* \pj (\mc{B}^*\mc{B})(\vxi) \mc{S}.
\end{equation}
Note that for $u \in \U$, the definition of $\mc{B}^{*}$ in Theorem~\ref{thm:obs} provides that
$$
(\mc{B}^{*}\mc{B})(\vxi)u = \dr (\vxi) \sum_{k=1}^{\ny} \tau_k \llangle \tau_k \dr(\vxi), u \rrangle.
$$
We then obtain an explicit representation of this operator with the tensor product $\otimes$. Specifically,
$$
(\mc{B}^{*}\mc{B})(\vxi) = \sum_{k=1}^{\ny} \tau_{k}\dr(\vxi) \otimes \tau_{k}\dr(\vxi)
$$
Then, by invoking Lemma~\ref{lem:tens_prod_deriv}, we are left with
\begin{equation}
\label{eq:BB_partial}
\pj (\mc{B}^{*}\mc{B})(\vxi) = \sum_{k=1}^{\ny} \tau_{k}\dr(\vxi) \otimes \tau_{k}\pj\dr(\vxi) +  \tau_{k}\pj\dr(\vxi) \otimes \tau_{k}\dr(\vxi).
\end{equation}
Subsequently,~\eqref{eq:Cpo_j} follows from combining~\eqref{eq:BB_partial} with~\eqref{eq:Hj_BB}.
\end{proof}

Using Theorem~\ref{thm:Cpo_partial}, we are now ready 
to derive the derivatives of the general c-optimality criterion:
\begin{corollary}
\label{lem:c_opt_derivs}
For $j \in \{1, \dots, N\}$, the $j$th partial derivative of the c-optimality criterion $\Psi$ in~\eqref{eq:c_optimal} is given by
\begin{equation}
\label{eq:psi_c_j}
\pj\Psi (\vxi) = - 2 \sig^{-2} \langle \mc{B} (\vxi) \mc{S} \Cpo (\vxi) c, \big( \pj \mc{B} (\vxi) \big) \mc{S} \Cpo (\vxi) c \rangle_{2}.
\end{equation}
\end{corollary}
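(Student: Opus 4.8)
The plan is to differentiate the scalar function $\xi_j \mapsto \Psi(\vxi) = \langle \Cpo(\vxi) c, c\rangle$ by applying the chain rule together with Theorem~\ref{thm:Cpo_partial}. First I would write
\begin{equation*}
\pj \Psi(\vxi) = \big\langle \big(\pj \Cpo(\vxi)\big) c, c \big\rangle = -\big\langle \Cpo(\vxi)\big(\pj\mc{H}(\vxi)\big)\Cpo(\vxi)\, c, c\big\rangle,
\end{equation*}
using~\eqref{eq:Cpo_j}; the interchange of the $\pj$ with the inner product is justified because $\bdot{\Cpo}$ is the Fr\'echet derivative in operator norm, so $\langle \bdot{\Cpo} c, c\rangle$ is the derivative of the scalar map by continuity of the bilinear form. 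Since $\Cpo(\vxi)$ is self-adjoint, I can move one copy of $\Cpo(\vxi)$ onto the right slot and set $w \defeq \Cpo(\vxi) c$, reducing the expression to $-\langle \big(\pj\mc{H}(\vxi)\big) w, w\rangle$.

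Next I would substitute the representation~\eqref{eq:H_partial_big} for $\pj\mc{H}(\vxi)$. After pulling the factors of $\mc{S}$ onto both sides via the adjoint — writing $\mc{S}w$ and recognizing $\langle \mc{S}^* X \mc{S} w, w\rangle = \langle X \mc{S}w, \mc{S}w\rangle$ — the quantity becomes $\sig^{-2}$ times
\begin{equation*}
\Big\llangle \Big(\sum_{k=1}^{\ny} \tau_k\dr(\vxi)\otimes \tau_k\pj\dr(\vxi) + \tau_k\pj\dr(\vxi)\otimes\tau_k\dr(\vxi)\Big)\mc{S}w,\ \mc{S}w\Big\rrangle.
\end{equation*}
Using the defining property $(a\otimes b)v = a\llangle b, v\rrangle$ and the symmetry of the two rank-one terms, each summand contributes $2\,\llangle \tau_k\dr(\vxi), \mc{S}w\rrangle\,\llangle \tau_k\pj\dr(\vxi), \mc{S}w\rrangle$, so the whole sum collapses to $2\sum_k \llangle \tau_k\dr(\vxi), \mc{S}w\rrangle \llangle \tau_k\pj\dr(\vxi), \mc{S}w\rrangle$.

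Finally I would recognize these $\U$-inner products as components of the observation operator and its derivative: by~\eqref{equ:observation_operator_def}, $\llangle \tau_k\dr(\vxi), \mc{S}w\rrangle = \big(\mc{B}(\vxi)\mc{S}w\big)_k$, and differentiating that identity componentwise (or directly from the formula for $\mc{B}^*$ and Lemma~\ref{lem:tens_prod_deriv}) gives $\llangle \tau_k\pj\dr(\vxi), \mc{S}w\rrangle = \big((\pj\mc{B}(\vxi))\mc{S}w\big)_k$. Hence the sum over $k$ is exactly the Euclidean inner product $\langle \mc{B}(\vxi)\mc{S}w, (\pj\mc{B}(\vxi))\mc{S}w\rangle_2$, and substituting $w = \Cpo(\vxi)c$ yields~\eqref{eq:psi_c_j} after tracking the factor $-2\sig^{-2}$.

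The computation is essentially bookkeeping once Theorem~\ref{thm:Cpo_partial} is in hand; the only point needing a little care is the identification of $\pj$ acting on $\mc{B}$ with the $\tau_k\pj\dr(\vxi)$ terms — i.e., confirming that the Fr\'echet derivative of $\mc{B}(\cdot)$ in the sense compatible with Theorem~\ref{thm:Cpo_partial} has $k$th component $\llangle \tau_k\pj\dr(\vxi),\cdot\rrangle$ — which follows from the smoothness assumption on $\vr$ in $\vxi$ together with dominated convergence applied to the Gaussian mollifier $\dr$. I expect this differentiation-under-the-integral-sign justification for $\pj\dr$ to be the main (mild) obstacle; everything else is algebraic manipulation of rank-one operators and adjoints.
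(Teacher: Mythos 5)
Your proposal is correct and follows essentially the same route as the paper's proof: apply Theorem~\ref{thm:Cpo_partial}, use self-adjointness of $\Cpo$ to reduce to $-\langle(\pj\mc{H})\Cpo c,\Cpo c\rangle$, collapse the rank-one tensor terms, and identify the resulting $\U$-inner products as the components of $\mc{B}(\vxi)\mc{S}\Cpo c$ and $(\pj\mc{B}(\vxi))\mc{S}\Cpo c$. The only wrinkle is sign bookkeeping: the correct derivative is $\pj\mc{H}=+\sig^{-2}\mc{S}^{*}\pj(\mc{B}^{*}\mc{B})\mc{S}$ (the minus sign displayed in~\eqref{eq:H_partial_big} is a typo, as the proof of Theorem~\ref{thm:Cpo_partial} itself shows), so your intermediate ``$+\sig^{-2}$ times'' should read ``$-\sig^{-2}$ times''; with that correction the factor $-2\sig^{-2}$ in~\eqref{eq:psi_c_j} emerges consistently, exactly as in the paper.
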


\begin{proof}

For notational convenience, we suppress dependencies on the design vector $\vxi$ in what follows. Using that $\Cpo$ is self-adjoint and the identity~\eqref{eq:Cpo_j} from Theorem~\ref{thm:Cpo_partial}, we obtain
\begin{equation}
\label{eq:Psi_c_j_intermediate}
\pj\Psi = \big\langle \pj \Cpo c, c \big\rangle = - \big\langle \big(\pj\mc{H}\big) \Cpo c , \Cpo c  \big\rangle.
\end{equation}
Then,~\eqref{eq:H_partial_big} and manipulating the tensor products reveals
$$
\big(\pj\mc{H}\big)\Cpo c = -\sig^{-2}\sum_{k=1}^{\ny} \llangle \tau_{k} \pj\dr, \mc{S}\Cpo c\rrangle \mc{S}^{*}\tau_{k}\dr + \llangle \tau_{k}\dr, \mc{S}\Cpo c\rrangle \mc{S}^{*}\tau_{k} \pj\dr.
$$
We then substitute this expression back into~\eqref{eq:Psi_c_j_intermediate} and use properties of the inner product 
to obtain
$$
\pj\Psi = - 2 \sig^{-2} \sum_{k=1}^{\ny} \llangle \tau_k \dr, \mc{S} \Cpo c \rrangle \llangle \tau_k \pj\dr, \mc{S}\Cpo c\rrangle.
$$
Denoting the $k$th component of $\mc{B} \mc{S} \Cpo c$ and $(\pj \mc{B}) \mc{S} \Cpo c$ by $\left[ \mc{B} \mc{S} \Cpo c\right]_{k}$ and $\left[ (\pj \mc{B}) \mc{S} \Cpo c\right]_{k}$, respectively,
\begin{equation}
\label{eq:B_components}
\begin{alignedat}{1}
\llangle \tau_k \dr, \mc{S} \Cpo c \rrangle &= \left[ \mc{B} \mc{S} \Cpo c\right]_{k},\\
\llangle \tau_k \pj\dr, \mc{S}\Cpo c\rrangle &= \left[ (\pj \mc{B}) \mc{S} \Cpo c\right]_{k}.
\end{alignedat}
\end{equation}

Recalling~\eqref{eq:psi_c_j}, the proof is concluded since
$$
\pj\Psi = - 2 \sig^{-2} \sum_{k=1}^{\ny} \left[ \mc{B} \mc{S} \Cpo c\right]_{k} \left[ (\pj \mc{B}) \mc{S} \Cpo c\right]_{k}.
$$

\end{proof}

We remark that the formula for $\pj \Psi$ presented in
Corollary~\ref{lem:c_opt_derivs} holds for every $c \in \ms{M}$. In the
following subsection, we specify the c-optimality criterion and its gradient for
a certain $c$.

\subsection{A specific criterion}\label{sec:copt_specific}

Here we present the c-optimality criterion used in our numerical demonstrations; see Section~\ref{sec:numerics}. In particular, we consider a certain bounded linear functional of $m$, referred to as a \emph{goal-functional}, which represents a prediction or quantity of interest. Having derived the vector $c \in \M$ which characterizes this goal-functional, we specialize the formulas for the criterion and its derivatives.

Let $v \in \U$ and consider the
goal-functional $Z:\M \to \R$ defined by
\begin{equation}
\label{eq:goal}
Z(m) \defeq \llangle v, \mc{S}m \rrangle, \quad m \in \ms{M}.
\end{equation}
If $v$ is an indicator function of a spatio-temporal subdomain
of $\Om \times T$, then $Z$ is a random variable describing
the integration of the solution $u = \mc{S}m$ over this subdomain. 
The resulting optimality criterion is then
\begin{equation}
\label{eq:goal_var}
\Psi(\vxi) \defeq \mV_{\mupoy}\{Z\}.
\end{equation} 
Using the adjoint of $\mc{S}$, an equivalent characterization of the goal-functional is
$Z(m) = \langle \mc{S}^* v, m \rangle$.
Hence, $\Psi$ is the c-optimality criterion in~\eqref{eq:c_optimal} with 
$c = \mc{S}^* v$. Equivalently,
\begin{equation}
\label{eq:psi}
\Psi(\vxi) = \langle \Cpo (\vxi) \mc{S}^* v, \mc{S}^* v\rangle.
\end{equation}
We can also substitute $c = \mc{S}^* v$ in the derivative
formula~\eqref{eq:psi_c_j}, providing
\begin{equation}
\label{eq:psi__j}
\pj\Psi = - 2 \sig^{-2} \langle \mc{B} (\vxi) \mc{S} \Cpo (\vxi) \mc{S}^* v, \big( \pj \mc{B} (\vxi) \big) \mc{S} \Cpo (\vxi) \mc{S}^* v \rangle_{2}.
\end{equation}

\begin{remark}

The above developments extend naturally to other optimality criteria in the infinite-dimensional setting. In particular, the A- and D-optimality criteria emerge in a straightforward manner. We recall that the A-optimality criterion \cite{Alexanderian21} is defined by $\PsiA \defeq \tr(\Cpo)$, which can be approximated using a Monte Carlo trace estimator. The formula for the partial derivative of $\Psi$ in Corollary~\ref{lem:c_opt_derivs} can then be used to compute the gradient of trace-estimated A-optimality criterion. The D-optimality criterion \cite{AlexanderianGhattasEtAl16} is also within reach. Evaluating this criterion requires computing the spectrum of the \emph{prior-preconditioned data misfit Hessian} $\Cpr^{1/2}\iHmis\Cpr^{1/2}$, which we already do in our numerical implementation; see Subsection~\ref{sec:considerations}.

\end{remark}

\section{Parameterized sensor paths}
\label{sec:parameterized_paths}

Our framework is compatible with any parameterized curve $\vr(\cdot \, ; \vxi)
\in \R^{2}$ that is continuously differentiable with respect to the path
parameter $\vxi$, and it readily extends to three-dimensional domains. The
specific form of parameterization depends on problem-specific factors related to
the inversion model and sensor apparatus. For example, an aerial drone has a
limited battery life and capability to accelerate---both influential on the
choice of parameterization or constraints on $\vxi$. We present two flexible
parameterizations: 
\emph{\Bezier} and \emph{Fourier} paths. After defining these
path types in Subsection~\ref{sec:path_types}, we show how constraints and other
practical considerations can be incorporated in the path-OED problem in 
Subsection~\ref{sec:path_constraints}.  

\subsection{Path types}
\label{sec:path_types}

In this subsection, we discuss \Bezier{} and Fourier paths for path-OED.

\boldheading{\Bezier{} curves}
\Bezier{} curves~\cite{farouki12,forrest90} are commonly used in
computer graphics, approximation theory, and optimization. 
Such curves have been utilized in optimal path-planning as well.
In particular, 
\Bezier{} curves were used to solve an autonomous vehicle path-planning
problem in~\cite{raju22}, where the path is constrained to a corridor.
\Bezier{} curves are also utilized in optimal control literature; 
see e.g.,~\cite{kmet19,LeeKim21}. The existing theory and software for \Bezier{} curves make them a natural choice
for parameterizing sensor trajectories.

A degree $\Nb$ \Bezier{} curve is a polynomial of degree $\Nb$ that is uniquely characterized by a 
set of \emph{control points}
$\{\vec p_j\}_{j=0}^{N_b} \subset \R^2$. 
The \Bezier{} curve corresponding to this set of control points is
\begin{equation}
\label{eq:bezier_path}
\rb(t; \xib) \defeq \sum_{j=0}^{\Nb} {\Nb\choose j} t^{j}(1-t)^{\Nb-j}\vec p_{j},
\quad t \in [0, 1],
\end{equation}
where $\xib \defeq [\vec p_0^\top \; \vec p_1^\top \; \cdots \; \vec
p_{\Nb}^\top]^\top \in \R^{2(\Nb + 1)}$.  We use~\eqref{eq:bezier_path} to describe the path of a mobile sensor. While the formula for $\rb$ is defined 
over $t \in [0, 1]$, we can use a linear change of variable to define
$\rb$ over the inversion interval $\Ty$.
The first and last control points specify the endpoints of the \Bezier{} curve. If
these points coincide, then the curve is closed. Intermediate control points do
not necessarily lie on the curve, but have the behavior of attracting the curve
in the direction of the control points. We
depict two \Bezier{} curves in Figure~\ref{fig:bezier_examples}.

\begin{figure}[ht]
\centering
\includegraphics[height=0.3\textwidth]{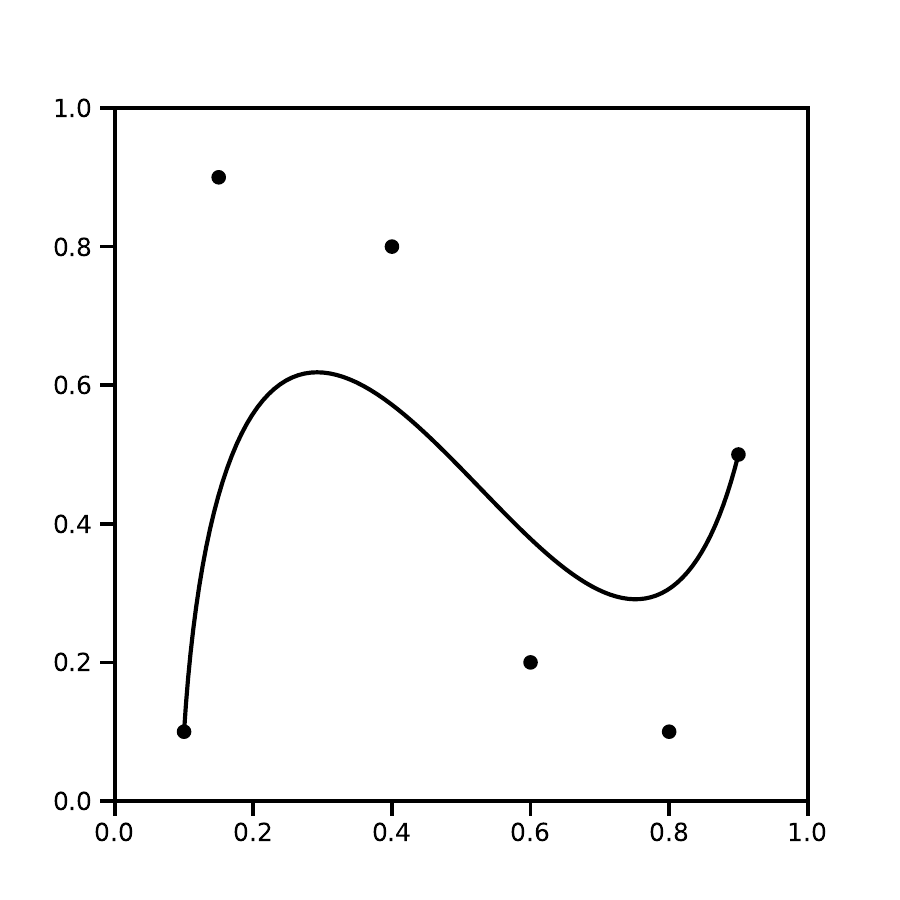}
\includegraphics[height=0.3\textwidth]{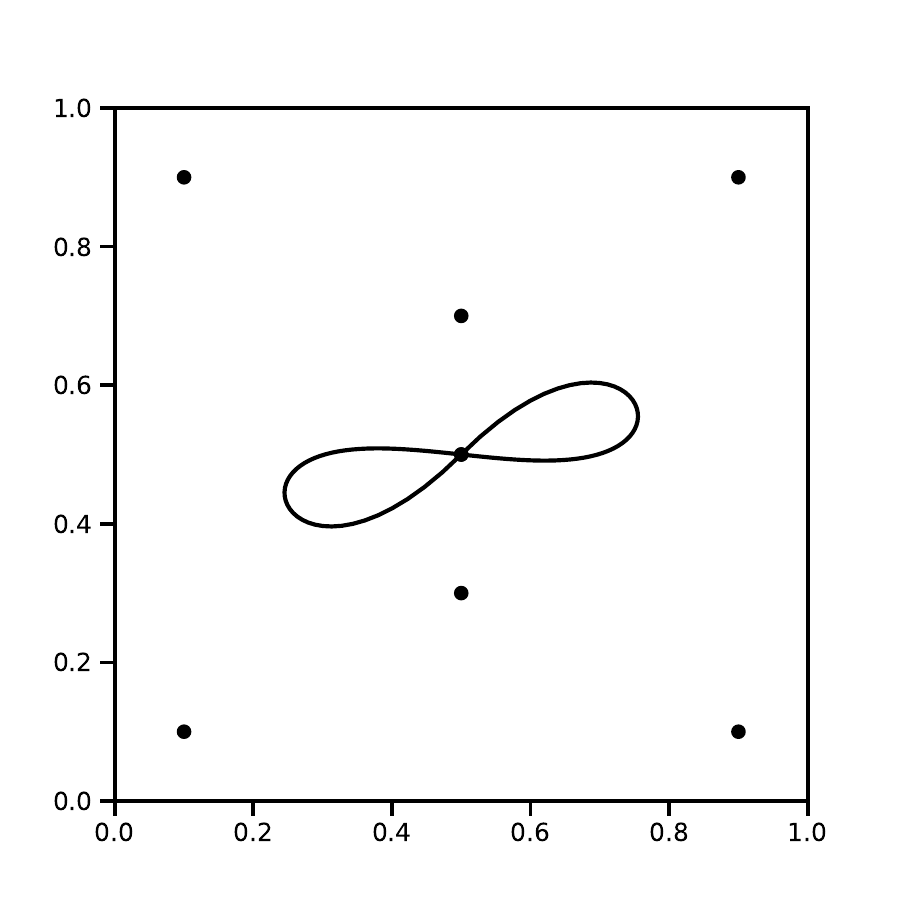}
\includegraphics[height=0.3\textwidth]{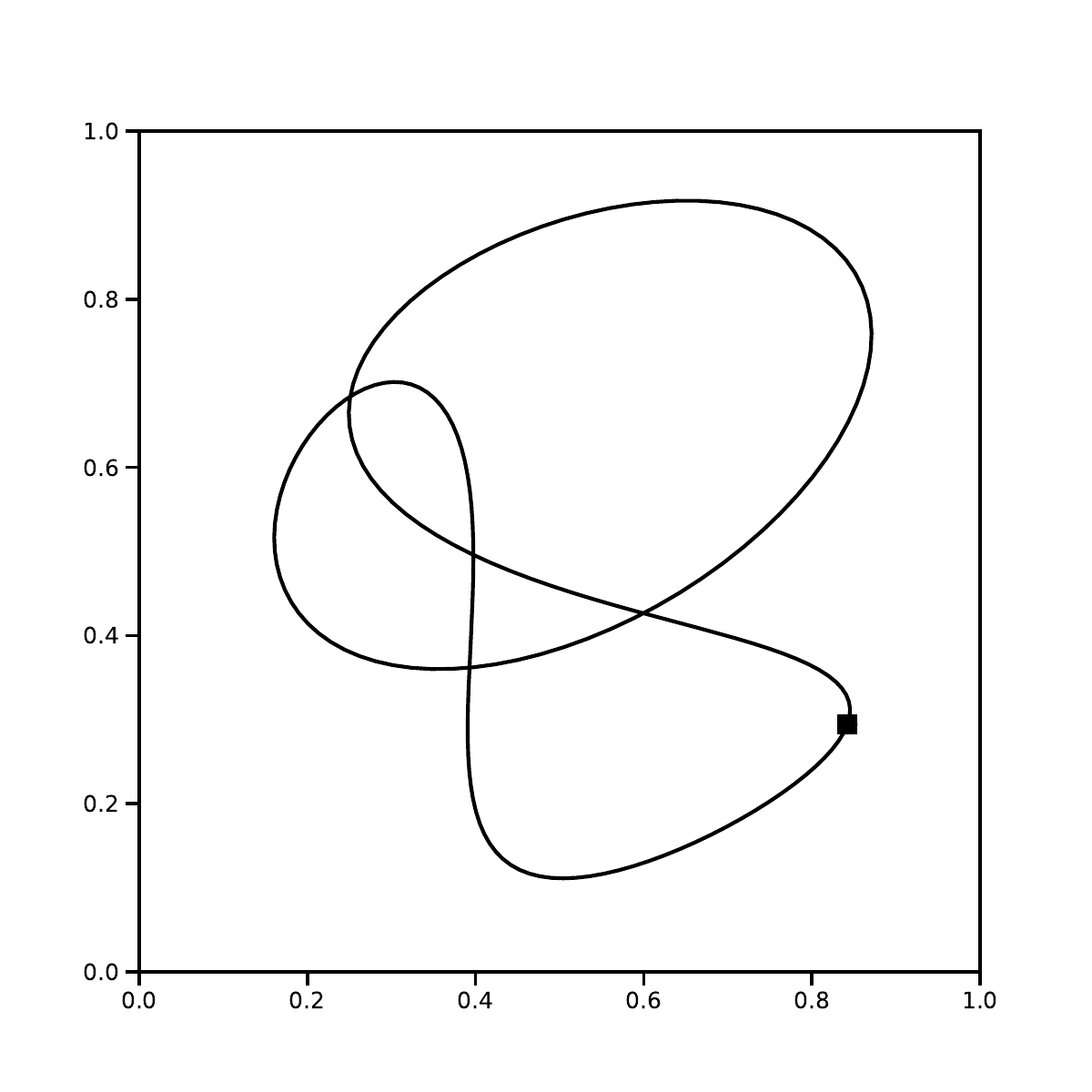}
\caption{Left and Middle: Bezier{} curves of degree $\Nb = 5$ with control points plotted as dots. Right: A Fourier path with $\Nf = 3$ modes.}
\label{fig:bezier_examples}
\end{figure}

\boldheading{Fourier paths}
Let $\bar{\vec x} \in \Om$ and $\Nf \in \N$. Given $t \in \Ty$ and a set of $\Nf$ frequencies $\{\om_{j}\}_{j=1}^{\Nf}$, the Fourier path $\rf$ with $\Nf$ modes is defined by
\begin{equation}
\label{eq:fourier_path}
\vec \rf(t; \xif) \defeq \xbar + \sum_{j=1}^{\Nf} 
\begin{bmatrix}
a_{j} \cos(\om_{j}t) + b_{j} \sin(\om_{j} t)\\
c_{j} \cos(\om_{j} t) + d_{j} \sin(\om_{j} t)
\end{bmatrix}
,
\end{equation}
where $\xif \defeq [a_1 \, b_1 \, c_1 \, d_1 \, \dots \, a_{\Nf} \, b_{\Nf} \, c_{\Nf} \, d_{\Nf}]^\top \in \R^{4\Nf}$.
For a path $\vr(t): \R \to \R^{2}$ whose components lie in the space $L^2(\Ty)$, the Fourier basis
\begin{equation}
\label{eq:fourier_basis}
\{\cos(\om_j t), \sin(\om_j t)\}_{j=0}^\infty \quad \text{with} \quad \om_j \defeq \frac{2\pi j}{|\Ty|},
\end{equation}
can be used to approximate each component of $\vr$. This motivates the
representation in~\eqref{eq:fourier_path}. We illustrate an example of a Fourier
path in Figure~\ref{fig:bezier_examples}.

Note that the summation term
in~\eqref{eq:fourier_path} is a linear transformation of the path
parameter (design vector). Thus, $\rf$ can be viewed as an affine transformation on $\xif$,
parameterized over $\Ty$. The shift term $\bar{\vx}$ accounts for the constant
basis function in~\eqref{eq:fourier_basis}. We use this to translate the path
into $\Om$. Fourier paths are periodic and
can generate a rich family of curves, even with a low number of modes.

\subsection{Imposing physical constraints}
\label{sec:path_constraints}
Here, we discuss the following practical considerations (i) confining
sensor paths to $\Omega$; (ii) regularizing penalty terms; and
(iii) incorporating regions where data are known to be obscured. 

\boldheading{Confining paths}
In many applications we want to keep the sensor in the domain $\Om$. One way of
enforcing this is to place a constraint directly on the path parameter $\vxi$.
In the case of \Bezier{} paths, it is known that the curve $\rb$ lies in the
convex hull of its control points~\cite{raju22}. Thus, in the case of a convex domain 
$\Om$, we can keep the \Bezier{}
path in $\Om$ by restricting the control
points to $\Om$. In the case that $\Om$ is rectangular, enforcing this
constraint results in an optimization problem with a linear inequality
constraint on $\xib$.

Confining $\rf$ to $\Om$ is not as simple. One idea is to constrain $\rf$ to a box about $\bar{\vx}$. Choose $\tilde{\vec x} \in \R^2_+$ such that 
\begin{equation}
\label{eq:fourier_box}
[\bar{x}_1 - \tilde{x}_1, \bar{x}_1 + \tilde{x}_1] \times [\bar{x}_2 - \tilde{x}_2, \bar{x}_2 + \tilde{x}_2] \subseteq \Om.
\end{equation}
Without loss of generality, focusing on the first component of $\rf$ in~\eqref{eq:fourier_path}, the triangle inequality provides
$$
\Big|\sum_{j=1}^{\Nf}a_j\cos(\om_j t) + b_j\sin(\om_jt)\Big| \leq \sum_{j=1}^{\Nf} |a_j| + |b_j|.
$$
Thus, we can confine $\rf$ to the box~\eqref{eq:fourier_box} by enforcing
\begin{equation}
\label{eq:fourier_box_constraint_1}
\sum_{j=1}^{\Nf}
\begin{bmatrix}
|a_j| + |b_j|\\
|c_j| + |d_j|
\end{bmatrix}
\leq 
\begin{bmatrix}
\tilde{x}_1\\
\tilde{x}_2
\end{bmatrix}.
\end{equation}
We note that this is a nonlinear constraint on $\xif$ and that~\eqref{eq:fourier_box_constraint_1} is satisfied whenever
\begin{equation}
\label{eq:fourier_box_constraint_2}
|a_j|,|b_j| \leq \frac{\tilde{x}_1}{2\Nf} \quad \text{and} \quad |c_j|,|d_j| \leq \frac{\tilde{x}_2}{2\Nf}, \quad j \in \{1, 2, \dots, \Nf\}.
\end{equation}
Hence,~\eqref{eq:fourier_box_constraint_2} is a linear constraint that confines the Fourier paths to a box about $\bar{\vx}$. Unfortunately, this is conservative. 
A constraint that allows the paths to explore the domain more freely is
desirable.  The following result provides a simple nonlinear constraint that
confines the sensor path to a circular subdomain of $\Omega$. 
\begin{theorem}
\label{thm:disk}
Let $\Rd > 0$ and consider 
the Fourier path $\rf$ in~\eqref{eq:fourier_path}. We have, 
\[ 
  \|\vec \rf - \xbar \|_{2} \leq \Rd, 
  \quad \text{whenever}\quad \|\vxi_{f}\|_{2} \leq \frac{\Rd}{\sqrt{\Nf}}.
\]
\end{theorem}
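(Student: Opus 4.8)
The plan is to bound the deviation $\|\vec \rf(t;\xif) - \xbar\|_2$ pointwise in $t$ by the Euclidean norm of the coefficient vector $\xif$, using the Cauchy--Schwarz inequality componentwise. Writing $\vec \rf(t;\xif) - \xbar = \sum_{j=1}^{\Nf} \vec w_j(t)$, where
\[
\vec w_j(t) \defeq
\begin{bmatrix}
a_j \cos(\om_j t) + b_j \sin(\om_j t)\\
c_j \cos(\om_j t) + d_j \sin(\om_j t)
\end{bmatrix},
\]
I would first observe that each scalar entry, e.g.\ $a_j\cos(\om_j t) + b_j \sin(\om_j t)$, is an inner product of $(a_j, b_j)$ with the unit-ish vector $(\cos(\om_j t), \sin(\om_j t))$. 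Since $\cos^2(\om_j t) + \sin^2(\om_j t) = 1$, Cauchy--Schwarz gives $|a_j\cos(\om_j t) + b_j\sin(\om_j t)|^2 \le a_j^2 + b_j^2$, and similarly for the second component. Hence $\|\vec w_j(t)\|_2^2 \le a_j^2 + b_j^2 + c_j^2 + d_j^2$, which is exactly the squared norm of the $j$th block of $\xif$.

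Next I would sum over $j$. By the triangle inequality in $\R^2$, $\|\vec \rf(t;\xif) - \xbar\|_2 \le \sum_{j=1}^{\Nf}\|\vec w_j(t)\|_2 \le \sum_{j=1}^{\Nf}\big(a_j^2+b_j^2+c_j^2+d_j^2\big)^{1/2}$. Applying Cauchy--Schwarz once more to this sum of $\Nf$ nonnegative terms (pairing each with $1$) yields
\[
\sum_{j=1}^{\Nf}\big(a_j^2+b_j^2+c_j^2+d_j^2\big)^{1/2} \le \sqrt{\Nf}\,\Big(\sum_{j=1}^{\Nf} a_j^2+b_j^2+c_j^2+d_j^2\Big)^{1/2} = \sqrt{\Nf}\,\|\xif\|_2.
\]
Combining the two displays, $\|\vec \rf(t;\xif) - \xbar\|_2 \le \sqrt{\Nf}\,\|\xif\|_2$ for every $t \in \Ty$. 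Therefore, if $\|\xif\|_2 \le \Rd/\sqrt{\Nf}$, then $\|\vec \rf(t;\xif) - \xbar\|_2 \le \Rd$ uniformly in $t$, which is the claim.

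There is no serious obstacle here; the argument is two nested applications of Cauchy--Schwarz plus a triangle inequality. The only point requiring a moment of care is making sure the two distinct Cauchy--Schwarz steps are not conflated: the first is applied within each mode (over the pair of trigonometric coefficients, exploiting the Pythagorean identity for sine and cosine), while the second is applied across modes (over the $\Nf$ block-norms, which is where the factor $\sqrt{\Nf}$ enters). I would also note explicitly that the bound is independent of $t$, so the containment $\|\vec \rf - \xbar\|_2 \le \Rd$ holds along the entire trajectory, not just at sampled times.
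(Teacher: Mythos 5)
Your proof is correct, and it takes a genuinely different route from the paper. The paper first rewrites the Fourier path as an affine map, $\rf(t;\xif) = \xbar + \Tf(t)\xif$, invokes the general bound $\|\rf - \xbar\|_2 \le \|\xif\|_2 \sup_t \|\Tf(t)\|_2$ from Remark~\ref{rmk:general_disk}, and then computes the operator norm exactly: it assembles the matrix $\Tf^{\top}\Tf$ explicitly, identifies it as $\Nf(\vec v_1 \vec v_1^{\top} + \vec v_2 \vec v_2^{\top})$ for an orthonormal pair $\vec v_1, \vec v_2$, and concludes $\|\Tf(t)\|_2 = \sqrt{\Nf}$ for every $t$. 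You instead bypass the matrix representation entirely: Cauchy--Schwarz within each mode (via $\cos^2(\om_j t)+\sin^2(\om_j t)=1$) gives $\|\vec w_j(t)\|_2^2 \le a_j^2+b_j^2+c_j^2+d_j^2$, the triangle inequality sums the modes, and a second Cauchy--Schwarz across the $\Nf$ block norms produces the factor $\sqrt{\Nf}$, yielding $\|\rf(t;\xif)-\xbar\|_2 \le \sqrt{\Nf}\,\|\xif\|_2$ uniformly in $t$ --- exactly the bound needed, and with the same constant. Your argument is shorter and more elementary, and it fully proves the stated implication; what it does not deliver is the \emph{exact} value of $\|\Tf(t)\|_2$ (you only obtain an upper bound that happens to coincide with it), which is the extra content the paper's spectral computation provides and which underlies the claim in Remark~\ref{rmk:general_disk} that for Fourier paths the supremum $\sup_{t}\|\mat T(t)\|_2$ can be evaluated analytically rather than merely bounded.
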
 
\begin{proof}
See Appendix~\ref{appdx:disk}.
\end{proof}

\begin{remark}
\label{rmk:general_disk}

In the proof of Theorem~\ref{thm:disk} we use the affine structure of the
Fourier path type. In fact, we can derive a constraint on the design vector that allows us to restrict any affine path to a disk of radius $\Rd$.
Specifically, let $\mat T: \Ty \to \R^{2 \times N}$ and consider a path of the form $\vr(t;\vxi) = \bar{\vec x} + \mat T(t) \vxi$. Then, for any $\vxi \in \R^{N}$ and $\bar{\vec x} \in \R^{2}$,
$$
\|\vec r(t; \vxi) - \bar{\vx}\|_2 = \|\mat T(t) \vxi \| \leq \|\mat T(t)\|_2 \|\vxi\|_2 \leq \|\vxi\|_2 \ \underset{t \in T_{y}}{\sup} \ \|\mat T(t)\|_{2}.
$$
Hence,
\begin{equation}
\label{eq:general_disk}
\|\vr - \bar{\vx}\|_2 \leq \Rd, 
\quad \text{whenever} \quad 
\|\vxi\|_{2} \leq \frac{\Rd}{\underset{t \in T_{y}}{\sup} \ \|\mat T(t)\|_{2}}.
\end{equation}
For certain parameterizations, numerically computing the supremum of $\|\mat
T(t)\|_{2}$ over the inversion interval can be expensive. An advantage of the
Fourier path type is that we can compute $\| \mat{T}(t) \|_2$ analytically.  
Specifically, we can show that $\| \mat{T}(t)
\|_2 = \sqrt\Nf$ for all $t \in \Ty$; see Theorem~\ref{thm:disk}.
\end{remark}

\boldheading{Penalizing acceleration}
A common issue for solutions to the optimal path problem are curves with 
sharp turns or kinks. One method
for preventing this is to include a penalty term in the optimization problem. We
propose regularizing the optimality criterion $\Psi$ by adding the following term:
\begin{equation}
\label{eq:reg_acc}
R(\vxi; \gam) \defeq \frac{\gam}{2}\int_{\Ty} \|\ddot{\vr}(t;\vxi)\|_2^2 \, dt.
\end{equation}
This characterizes the time-integrated magnitude of the path's acceleration. Hence,
regularizing with $R$ will result in smoother optimal
paths. 
In practice, we find it necessary to penalize the Fourier paths in
this manner. The penalty $R$ for
the Fourier path is computed to be
\begin{equation}
R(\xif; \gam) = \frac{\gam|\Ty|}{4}\sum_{j=1}^{\Nf} \om_{j}^{4} (a_{j}^{2}+b_{j}^{2}+c_{j}^{2}+d_{j}^{2}).
\end{equation}

On the other hand, the regularity of the \Bezier{} paths can be easily controlled
by modulating the number of control points. In this case, adding such regularizing penalty terms were not needed.

\boldheading{Obscured regions}
In some applications, there may be a subdomain of $\Om$ where the state $u$ is
concealed or data is otherwise challenging to collect. We refer to such ares as
\emph{obscured regions}. Our motivation of considering such regions stems from 
wildfire applications, where smoke or other obstacles might degrade the ability
of a sensor to collect data. However, we assume that the sensor is not
necessarily prohibited, but rather discouraged, from entering such obscured
regions. 

We explain how to incorporate a disk-shaped obscured region into the path-OED
problem.  Namely, let $\xvoid \in \Om$ and $\Rvoid > 0$. Then, we define the
obscured region $D \subset \Om$ by
\begin{equation}
\Ovoid \defeq \{\vx \in \Om: \ \|\vx - \xvoid\|_2 \leq \Rvoid\}.
\end{equation} 
It is possible to force the sensor path to avoid $\Ovoid$ completely 
by incorporating a hard constraint in the path-OED formulation.
However, this adds complexity to the optimization 
problem. Here, we outline a tractable 
approach for discouraging data-acquisition within $\Ovoid$ that avoids 
such hard constraints. 
The key idea is to artificially modulate the noise variance of the data model,
filtering-out observations that occur within $\Ovoid$.  A challenge with this
approach is preserving the differentiability of $\Cpo(\vxi)$.

We consider a smoothed indicator function for $D$, denoted by $\void$. For
$\bet >0$, we define this function by
\begin{equation}
\label{eq:void}
\void(\vx) \defeq \left(1 + \exp\Big\{\frac{\|\vx - \xvoid\|_2 - \Rvoid}{\bet}\Big\}\right)^{-1}, 
\quad \vec x \in \Om. 
\end{equation}
We note that~\eqref{eq:void} is type of radial basis function
(RBF)~\cite{Buhmann2000} and remark on a few of its properties. For a
sufficiently small $\bet$ and $\vx \in \Ovoid$, we have that $\void(\vx) \approx
1$. Similarly, $\void(\vx) \approx 0$ for $\vx \not\in\Ovoid$. The parameter
$\bet$ controls the steepness of the transition region---a small $\beta$ resulting
in a closer approximation to an indicator function. 
The RBF $\void$ is depicted for various scaling parameters in
Figure~\ref{fig:rbf}.  We can extend the present framework for
obscured regions to more complex geometries by considering linear combinations
of RBFs. Namely, we can approximate obscured regions that are not disks by
summing RBFs.

\begin{figure}[ht]
\centering
\includegraphics[width=\textwidth]{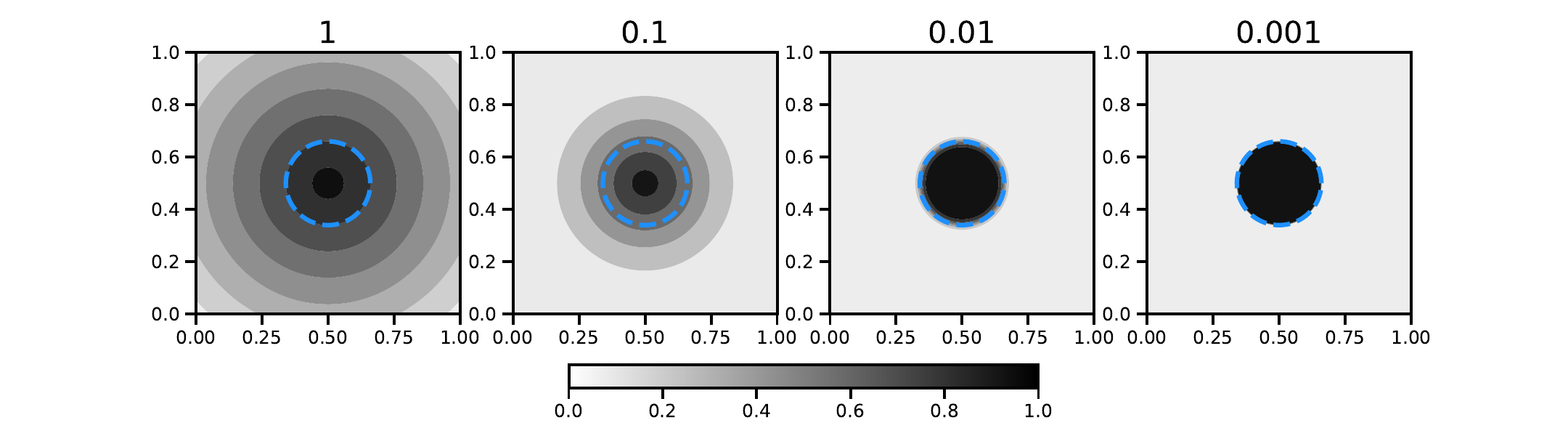}
\caption{Contours of the RBF $\void$ with $\xvoid = (0.5, 0.5)$, $\Rvoid = 0.16$, and 
$\beta \in \{1, 0.1, 0.01, 0.001\}$. The dotted blue circle represents the boundary of the obscured 
region.}
\label{fig:rbf}
\end{figure}

Consider the data-misfit Hessian $\mc{H}_{\text{mis}} (\vxi)$ in
\eqref{eq:hessian} for a given $\vxi$.  This operator can be expressed
as $\iHmis (\vxi) = \mc{F}^* (\vxi)(\sig^{-2} \mat I) \mc{F} (\vxi)$.  
This indicates that the noise covariance matrix $\sig^2\mat{I}$ (and its inverse) has the role of \emph{filtering} or weighting observations. We
extend this idea by smoothly modulating contributions of measurements locations
based on their position relative to $\Ovoid$.

Consider
$\ny$ measurement times $\{t_k\}_{k=1}^{\ny}$, and for a path $\vr$, define
$\vec\void: \R^N \to (0,1)^{\ny}$ by
\begin{equation}
\label{eq:void_vector}
\void_k(\vxi) \defeq \void(\vr(t_k;\vxi)), \quad k \in \{1, 2, \dots, \ny\}.
\end{equation}
Subsequently, we form the diagonal weighting matrix $\Void : \R^N \to \R^{\ny \times
\ny}$ defined by
\begin{equation}
\label{eq:void_matrix}
\Void (\vxi) \defeq \text{diag}(\void_1(\vxi), \ldots, \void_{\ny}(\vxi)).
\end{equation}
This is inserted into $\iHmis (\vxi)$, resulting in the ``filtered Hessian'':
\begin{equation}
\label{eq:hess_void}
\mc{\Hvoid} (\vxi) \defeq \sig^{-2}\mc{F}^*(\vxi) \big( \mat I - \Void (\vxi) \big) \mc{F}(\vxi) + \Cpr^{-1}.
\end{equation}
Using the matrix $\mat I - \Void (\vxi)$ in~\eqref{eq:hess_void} filters out observations that occur in the obscured region. 
In practice, we solve the path-OED problem using the filtered Hessian (and posterior
covariance), then discard the measurements in $\Ovoid$ when solving the
inverse problem. We derive the derivative of the filtered criterion $\Psi$ as in~\eqref{eq:psi} in Appendix~\ref{appdx:c_optimal_void}. The resulting
formula for $\pj\Psi$ contains the gradient of $\void$. This is given by
\begin{equation}
\grad \void = - \frac{\vx\void^2}{\bet \|\vx - \xvoid\|_2}\exp\Big\{\frac{\|\vx - \xvoid\|_2 - \Rvoid}{\bet}\Big\}.
\end{equation} 

The effectiveness of the present filtered approach for incorporating obscured
regions is illustrated in our numerical results in Section~\ref{sec:numerics}.

\section{Solving the discretized path-OED problem}
\label{sec:methods}

In this section, we describe how to discretize and solve the path-OED problem.
The developments here lay the foundations for an efficient computational
framework.  First, we construct finite-dimensional analogues for the parameter
space $\M$ and solution space $\U$; see Subsection~\ref{sec:hilbert}. Then, we
discretize the inverse problem in Subsection~\ref{sec:bayes_disc}.
Subsequently, we discuss the discretized version of the path-OED problem in
Subsection~\ref{sec:disc_path_OED}. In that subsection, we derive the
discretized c-optimality criterion and its gradient, and specialize the results
to the specific criterion considered in Subsection~\ref{sec:copt_specific}.
Lastly, we discuss computational considerations in
Subsection~\ref{sec:considerations}, where we measure computational cost in terms of
applications of the solution operator.

\subsection{Structure of the discretized Hilbert spaces}
\label{sec:hilbert}

We let $\{t_\ell\}_{\ell=0}^{\nt} \subset T$ be the \emph{global time discretization} and consider $\ny$ measurement times occurring during the inversion interval $\Ty$. For convenience, we assume equidistant time steps. That is, $t_{\ell+1} - t_\ell
= \Delta t$, for $\ell \in \{0, 1, \dots, n_t - 1\}$, where $\Delta t$ is a
fixed step size.  
In principle, the measurement times are independent of this discretization.
However, for simplicity, we assume they occur at some subset of the global temporal discretization. Specifically, we denote the measurement times by $\{t_{\ell_{k}}\}_{k=1}^{\ny} \subset \Ty$. This setup can be interpreted as having a sampling frequency that is a multiple
of a fixed global time step. 

For spatial discretization, we use the continuous Galerkin finite element method.
Let $\{\phi_i\}_{i=1}^{\nx}$ be a nodal finite element basis consisting of compactly
supported functions. Then, $m \in \M$ admits the approximation
\begin{equation}
m_h(\vx) \defeq \sum_{i=1}^{\nx} m_i \phi_i(\vx)
\end{equation}
and we identify $m_h$ with the
coefficient vector $\vec m \defeq [m_1 \, m_2 \, \cdots \, m_{\nx}]^\top$. Subsequently, for $c \in \M$ with finite element approximation $c_{h}$, we estimate the inner product $\langle m, c \rangle$ with the calculation
$$
\langle m_{h}, c_{h} \rangle = \int_{\Om} m_{h} c_{h} \, d\vx = \sum_{i,j=1}^{\nx} m_{i} c_{j} \langle \phi_{i}, \phi_{j} \rangle .
$$
This motivates the \emph{mass-weighted} inner product space $\RNM$. Specifically, we define the \emph{mass matrix} $\mat M$ by $M_{ij} \defeq \langle \phi_i, \phi_j \rangle$, for $i,j \in \{1, \dots, \nx\}$. Then, for $\vec m, \vec c \in \R^{\nx}$, the inner product on $\RNM$ is defined by
\begin{equation}
\label{eq:RNM}
\langle \vec m, \vec c \rangle_{\mat M} \defeq \vec m^{\top} \mat M \vec c.
\end{equation}

We use a similar strategy to discretize $\U$. Let $u\in \U$ and define the
spatial approximation $u_{h}$ of $u$ by 
\begin{equation}
u_h(\vx, t) \defeq \sum_{i=1}^{\nx} u_i(t) \phi_i(\vx).
\end{equation}
We identify $u_{h}$ with the time-dependent coefficient vector $\vec u(t)$, where $\vec u(t) \in \RNM$, for any $t \in T$. 
Next, we define $\vec u_{\ell} \defeq \vec u(t_{\ell})$, for $\ell \in \{1, \dots, \nt\}$, and form a \emph{snapshot matrix} $\mat U \defeq [\vec u_1 \, \cdots \, \vec u_{\nt}]$ of dimension $\nx \times \nt$. Thus, for $v \in \U$ with approximation $v_{h}$, we estimate the inner product $\llangle u, v \rrangle$ with
$$
\llangle u_{h}, v_{h} \rrangle = \int_{T} \left( \int_{\Om} u_{h} v_{h} \, d\vx \right) dt = \int_{T} \big \langle \vec u(t), \vec v(t) \big \rangle_{\mat M} \, dt.
$$
We obtain an inner product on the space of snapshot matrices by approximating the resulting integral via numerical quadrature. Specifically, let $\vec w \in \R^{\nt}$ denote a vector of positive quadrature weights and let $\mat V \defeq [\vec v_{1} \, \dots \, \vec v_{\nt}]$ be the snapshot matrix corresponding to $v_{h}$. Then, the inner product is
\begin{equation}
\label{eq:MT_prod}
\llangle \mat U, \mat V\rrangle_{\mat M} \defeq \sum_{\ell=1}^{\nt} w_{\ell}\langle \vec u_{\ell}, \vec v_{\ell}\rangle_{\mat M}.
\end{equation}
It is straightforward to show that~\eqref{eq:MT_prod} defines an inner product on $\R^{\nx \times \nt}$; we denote this inner product space by $\RNMT$. In Lemma~\ref{lem:time_space}, we present two useful identities for~\eqref{eq:MT_prod}. Specifically, we
require~\eqref{eq:MT_identities_one} to derive the adjoint of the discretized
observation operator.
In what follows, $\mat U \odot \mat V$ denotes the Hadamard product of 
matrices $\mat U$  and $\mat V$. Specifically, for matrices of the same dimension,
$(\mat U \odot \mat V)_{i\ell} \defeq U_{i\ell}V_{i\ell}$.

\begin{lemma}
\label{lem:time_space}
Consider~\eqref{eq:MT_prod} and let $\mat U, \mat V \in \RNMT$. Then,
\begin{subequations}
\begin{align}
\llangle \mat U, \mat V \rrangle_{\mat M} 
&= \vec 1^\top (\mat U \odot \mat M \mat V) \vec w
\label{eq:MT_identities_one}\\
&= \tr(\mat U^{\top} \mat M \mat V \mat W),
\label{eq:MT_identities_two}
\end{align}
\end{subequations}
where $\mat W \defeq \text{diag}(\vec w)$ and $\vec 1 \in \R^{\nx}$ is a constant ones vector.
\end{lemma}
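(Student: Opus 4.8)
The statement in Lemma~\ref{lem:time_space} is a pair of identities rewriting the weighted inner product~\eqref{eq:MT_prod} on $\RNMT$ in two algebraically convenient forms. Both follow by direct manipulation; there is no real obstacle, only bookkeeping with indices. Below is how I would organize the argument.

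\textbf{Proof plan.}
The plan is to start from the definition~\eqref{eq:MT_prod},
$\llangle \mat U, \mat V\rrangle_{\mat M} = \sum_{\ell=1}^{\nt} w_{\ell}\langle \vec u_{\ell}, \vec v_{\ell}\rangle_{\mat M}$,
and expand the mass-weighted inner product on $\R^{\nx}$ using~\eqref{eq:RNM}, namely $\langle \vec u_\ell, \vec v_\ell\rangle_{\mat M} = \vec u_\ell^\top \mat M \vec v_\ell = \sum_{i,j=1}^{\nx} U_{i\ell} M_{ij} V_{j\ell}$. Writing $(\mat M \mat V)_{i\ell} = \sum_{j} M_{ij} V_{j\ell}$, this becomes $\sum_{i=1}^{\nx} U_{i\ell} (\mat M\mat V)_{i\ell} = \sum_{i=1}^{\nx}(\mat U \odot \mat M \mat V)_{i\ell}$, using the definition of the Hadamard product. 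Substituting back,
$\llangle \mat U, \mat V\rrangle_{\mat M} = \sum_{\ell=1}^{\nt} w_\ell \sum_{i=1}^{\nx} (\mat U \odot \mat M\mat V)_{i\ell} = \vec 1^\top (\mat U \odot \mat M\mat V)\vec w$,
since left-multiplication by $\vec 1^\top \in \R^{\nx}$ sums over the row index $i$ and right-multiplication by $\vec w \in \R^{\nt}$ weights and sums over the column index $\ell$. This establishes~\eqref{eq:MT_identities_one}.

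For~\eqref{eq:MT_identities_two}, I would recognize the double sum $\sum_{\ell} w_\ell \sum_i U_{i\ell}(\mat M\mat V)_{i\ell}$ as a trace. Indeed, $\sum_i U_{i\ell}(\mat M\mat V)_{i\ell} = (\mat U^\top \mat M\mat V)_{\ell\ell}$, so
$\llangle \mat U, \mat V\rrangle_{\mat M} = \sum_{\ell=1}^{\nt} w_\ell (\mat U^\top \mat M\mat V)_{\ell\ell} = \sum_{\ell=1}^{\nt} (\mat U^\top \mat M\mat V)_{\ell\ell} W_{\ell\ell} = \sum_{\ell=1}^{\nt}(\mat U^\top\mat M\mat V\mat W)_{\ell\ell} = \tr(\mat U^\top \mat M\mat V\mat W)$,
where $\mat W = \mathrm{diag}(\vec w)$ and the second-to-last step uses that multiplying on the right by the diagonal matrix $\mat W$ scales column $\ell$ of $\mat U^\top \mat M\mat V$ by $w_\ell$, hence scales its $\ell$th diagonal entry by $w_\ell$. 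Alternatively, one can obtain~\eqref{eq:MT_identities_two} directly from~\eqref{eq:MT_identities_one} by noting that for any conformable matrices $\mat A$ and any vectors $\vec 1, \vec w$ one has $\vec 1^\top(\mat A \odot \mat B)\vec w = \tr(\mat A^\top \operatorname{diag}(\vec 1)\,\mat B\,\operatorname{diag}(\vec w))$ specialized appropriately, but the index computation above is cleaner and self-contained.

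\textbf{Anticipated difficulty.}
There is no genuine obstacle; the only care needed is keeping the row index $i \in \{1,\dots,\nx\}$ and the column (time) index $\ell \in \{1,\dots,\nt\}$ straight, and confirming that the vector $\vec 1$ used to collapse the Hadamard product has length $\nx$ (as stated) while $\vec w$ has length $\nt$. One should also note in passing that $\mat M$ is symmetric, so $\mat U^\top \mat M\mat V$ is the natural object and no transpose ambiguity arises. I would present the chain of equalities for~\eqref{eq:MT_identities_one} as a short \texttt{align*} block and then derive~\eqref{eq:MT_identities_two} in a sentence or two.
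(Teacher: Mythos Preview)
Your proposal is correct and follows essentially the same index-bookkeeping approach as the paper: both expand $\langle \vec u_\ell,\vec v_\ell\rangle_{\mat M}$ as $\sum_i U_{i\ell}(\mat M\mat V)_{i\ell}$ and then package the double sum as $\vec 1^\top(\mat U\odot\mat M\mat V)\vec w$ for~\eqref{eq:MT_identities_one} and as a trace for~\eqref{eq:MT_identities_two}. The only cosmetic difference is that the paper derives~\eqref{eq:MT_identities_two} by starting from $\tr(\mat U^\top\mat M\mat V\mat W)$ and unwinding to the sum, whereas you go in the forward direction; both are equally direct.
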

\begin{proof}
We first note that $\langle \vec u_{\ell}, \vec v_{\ell} \rangle_{\mat M} = \vec 1^{\top} \left(\vec u_{\ell} \odot (\mat M \vec v_{\ell})\right)$. Subsequently, the first identity follows from 
\begin{align*}
\sum_{\ell=1}^{\nt} w_{\ell} \langle \vec u_{\ell}, \vec v_{\ell} \rangle_{\mat M} = \sum_{\ell=1}^{\nt} w_{\ell} \vec 1^{\top} \left(\vec u_{\ell} \odot (\mat M \vec v_{\ell})\right) = \vec 1^{\top} \sum_{\ell=1}^{\nt} w_{\ell} \left( \vec u_{\ell} \odot (\mat M \mat V)_{\ell}\right) = \vec 1^{\top} (\mat U \odot \mat M \mat V)\vec w.
\end{align*}
As for the second identity, we note 
$$
\tr(\mat U^\top \mat M \mat V \mat W) = \sum_{i=1}^{\nx}\sum_{\ell=1}^{\nt} U_{i\ell}(\mat M \mat V \mat W)_{i\ell} = \sum_{\ell=1}^{\nt} w_\ell \sum_{i=1}^{\nx} U_{i\ell}(\mat M \mat V)_{i\ell} = \sum_{\ell=1}^{\nt} w_{\ell} \langle \vec u_{\ell}, \vec v_{\ell} \rangle_{\mat M}.
$$
\end{proof}

\begin{remark}
Lemma~\ref{lem:time_space} shows that the $\RNMT$ inner product can be viewed
as a weighted Frobenius inner product. 
Recall that the Frobenius inner product~\cite[Chapter 5.2]{horn2012matrix} of matrices $\mat A_1$ and 
$\mat A_2$ 
(for which $\mat A_{1}^{\top} \mat A_{2}$ is well-defined) is  
\begin{equation}
\langle \mat A_{1}, \mat A_{2} \rangle_{\text{F}} \defeq \tr(\mat A_{1}^{\top} \mat A_{2}) = \sum_{i,j} A_{ij}B_{ij}.
\end{equation}
Now, consider $\mat U, \mat V \in \RNMT$. By Lemma~\ref{lem:time_space}, 
$\llangle \mat U, \mat V \rrangle_{\mat M} = \tr(\mat U^\top \mat M \mat V \mat W) = \langle \mat U, \mat M \mat V \mat W \rangle_{\text{F}}$.
Hence, $\llangle \cdot, \cdot \rrangle_{\mat M}$ is a Frobenius inner product that is weighted according  
to the spatiotemporal discretization.
\end{remark}

\subsection{The discretized inverse problem}
\label{sec:bayes_disc}
In what follows we derive the discretized version of the inverse problem
presented in Section~\ref{sec:inverse_problem}. We first discuss the discrete
solution and observation operators.  A key result is
Proposition~\ref{prop:disc_obs}, which provides the adjoint of the discretized
observation operator.  Subsequently, we provide formulas for the posterior mean and
covariance operator associated with the finite element coefficients of the
discretized inversion parameter.

\boldheading{The solution and observation operators} 
The discretized solution operator is denoted by
\begin{equation}
\label{eq:disc_solution}
\mat S: \RNM \to \RNMT.
\end{equation}
This linear operator maps coefficient vectors in $\RNM$ to snapshot
matrices in $\RNMT$. 
The adjoint of the solution operator is denoted by $\mat S^*:\RNMT \to \RNM$.
In Appendix~\ref{appdx:solution}, we derive the discretized solution operator
and its adjoint for the model used in our numerical experiments.

Next, we construct the 
discretized observation operator
\begin{equation}
\mat B: \RNMT \to \R^{\ny}.
\end{equation}
We proceed in two steps. First, we describe how to obtain pointwise measurements of the finite element approximation $u_h$ at all times in the discretization $\{t_{\ell}\}_{\ell=1}^{\nt} \subset T$. Then, we define an operator that restricts these evaluations to the measurement times $\{t_{\ell_{k}}\}_{k=1}^{\ny} \subset \Ty$. Combining these operations together produces the action of the observation operator.

For a time $t_\ell$ in the global discretization of $T$ and  a path $\vr$, we define $\vec \phi_\ell: \R^{N} \to \R^{\nx}$ by
\begin{equation}
\label{eq:pointsource_vector}
\vec \phi_\ell(\vxi) \defeq [\phi_1(\vr_\ell) \, \phi_2(\vr_\ell) \, \cdots \, \phi_{n_{x}}(\vr_\ell)]^\top, \quad \text{where} \quad \vr_\ell \defeq \vr(t_\ell; \vxi).
\end{equation}
Note that $\vec \phi_\ell$ consists of basis functions evaluated at $\vr_\ell
\in \Om$. For an approximation $m_h$ of $m \in \M$ with coefficients $\vec m \in
\RNM$, a measurement of $m$ at $\vr_\ell$ can be estimated by $\langle \vec m,
\vec \phi_\ell \rangle_2$. To see this, note that $\langle \dr(t_\ell; \vxi),
m\rangle \approx m_{h}(\vr_\ell) = \sum_{i=1}^{\nx} m_i \phi_i(\vr_\ell).$
Similarly, if $u_h$ is the finite element approximation of $u \in \ms{U}$ with associated snapshot matrix $\mat U \in \RNMT$, then a pointwise measurement of $u$ at $(\vr_{\ell},
t_{\ell})$ can be estimated by $\langle \vec u_{\ell},
\vec\phi_{\ell}\rangle_{2}$. Subsequently, we form the matrix $\mat \Phi: \R^N \to \in \R^{\nx \times \nt}$, defined by
\begin{equation}
\label{eq:pointsource_matrix}
\mat \Phi(\vxi) \defeq [\vec \phi_1(\vxi) \, \vec \phi_2(\vxi) \, \cdots \, \vec \phi_{\nt}(\vxi)].
\end{equation}
Using $\mat \Phi$, we observe the columns of $\mat U$ at the points $\{ (\vr_{\ell}, t_{\ell}) \}_{\ell=1}^{\nt}$ via the calculation
\begin{equation}
\label{eq:disc_pointwise_obs}
\big( \mat U \odot \mat \Phi(\vxi) \big)^{\top}\vec 1 = \big[ u_{h}(\vr_{1}, t_{1}) \,  u_{h}(\vr_{2}, t_{2}) \, \cdots \,  u_{h}(\vr_{\nt}, t_{\nt}) \big]^{\top}.
\end{equation}

The operation in~\eqref{eq:disc_pointwise_obs} produces a measurement vector of length $\nt$.
Hence, the final step in constructing $\mat B$ is to discard measurements that do not occur at the times $\{t_{\ell_{k}}\}_{k=1}^{\ny}$. For this, we require an \emph{extraction operator}. Let $\vec e_\ell \in \R^{\nt}$ be the
$\ell${th} standard Euclidean basis vector and define $\mat E \in
\R^{\nt \times \ny}$ by $\mat E \defeq [\vec e_{\ell_1} \, \vec e_{\ell_2} \, \cdots \,
\vec e_{\ell_{\ny}}].$ Then, the extraction operator $\mc{E}: \R^{\nx \times
\nt} \to \R^{\nx \times \ny}$ is
\begin{equation}
\label{eq:extraction_operator}
\mc{E}\mat U \defeq \mat U \mat E, \quad \mat U \in \RNMT.
\end{equation}
It is straightforward to show that right-multiplying $\mat U$ by $\mat E$ yields a matrix in $\R^{\nx \times \ny}$ whose columns are those of $\mat U$ indexed by the set $\{\ell_k\}_{k=1}^{\ny}$. By combining~\eqref{eq:disc_pointwise_obs} and~\eqref{eq:extraction_operator}, the action of $\mat B$ on $\mat U$ is defined by
\begin{equation}
\label{eq:disc_obs}
\mat B(\vxi) \mat U \defeq \big( \mc{E}\mat U \odot \mc{E}\mat \Phi(\vxi) \big)^\top\vec 1.
\end{equation}

To formulate the discretized inverse problem we need 
the adjoint operator $\mat B^{*}:\R^{\ny} \to \RNMT$. 
The following proposition provides a formula for $\mat B^{*}$ and enables 
matrix-free implementations. 
\begin{proposition}
\label{prop:disc_obs}
Let $\mat U \in \RNMT$, $\vxi \in \R^{N}$, and define $\vec w^{-1} \in \R^{\nt}$ by $(\vec w^{-1})_\ell \defeq 1 / w_\ell$, for $\ell \in \{1, 2, \dots, \nt\}$. 
Then, the adjoint of $\mat{B}(\vxi)$ is given by 
\begin{equation}
\label{eq:disc_obs_adj}
\mat B(\vxi)^* \vec y = \mat M^{-1}\mat \Phi(\vxi) \mathup{diag}\{\mat E \vec y \odot \vec w^{-1}\}, \quad \text{for} \quad \vec y \in \R^{\ny}.
\end{equation}
\end{proposition}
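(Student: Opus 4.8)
The plan is to verify the defining adjoint relation
$\langle \mat B(\vxi)\mat U, \vec y\rangle_2 = \llangle \mat U, \mat B(\vxi)^*\vec y\rrangle_{\mat M}$
for every $\mat U \in \RNMT$ and every $\vec y \in \R^{\ny}$, where the right-hand side is expanded with the identity~\eqref{eq:MT_identities_one} from Lemma~\ref{lem:time_space} and the left-hand side uses the Euclidean inner product on $\R^{\ny}$. Since $\mat M$ is a finite element mass matrix, it is symmetric positive definite and hence invertible, so the claimed formula defines a genuine element of $\RNMT$; once the relation above is established, uniqueness of the adjoint finishes the proof.

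First I would unpack the left-hand side. By the definition~\eqref{eq:disc_obs} of $\mat B(\vxi)$, together with $\mc E\mat U = \mat U\mat E$ and $\mc E\mat\Phi(\vxi) = \mat\Phi(\vxi)\mat E$, the $k$th entry of $\mat B(\vxi)\mat U$ equals $\sum_{i=1}^{\nx}(\mat U\mat E)_{ik}\,(\mat\Phi(\vxi)\mat E)_{ik}$. Because the $k$th column of $\mat E$ is the standard basis vector $\vec e_{\ell_k}$, we have $(\mat U\mat E)_{ik} = U_{i\ell_k}$ and $(\mat\Phi(\vxi)\mat E)_{ik} = \Phi_{i\ell_k}(\vxi)$, so that $\langle\mat B(\vxi)\mat U,\vec y\rangle_2 = \sum_{k=1}^{\ny}\sum_{i=1}^{\nx} y_k\,U_{i\ell_k}\Phi_{i\ell_k}(\vxi)$. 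Next I would expand the right-hand side. Writing $\vec d \defeq \mat E\vec y\odot\vec w^{-1}$ and $\mat Z \defeq \mat B(\vxi)^*\vec y = \mat M^{-1}\mat\Phi(\vxi)\,\mathup{diag}(\vec d)$, left-multiplication by $\mat M$ cancels $\mat M^{-1}$, giving $(\mat M\mat Z)_{i\ell} = \Phi_{i\ell}(\vxi)\,d_\ell$. Substituting into $\llangle\mat U,\mat Z\rrangle_{\mat M} = \vec 1^\top(\mat U\odot \mat M\mat Z)\vec w = \sum_{i,\ell}U_{i\ell}\Phi_{i\ell}(\vxi)\,d_\ell w_\ell$, the quadrature weights cancel the $\vec w^{-1}$ in $\vec d$, leaving $\sum_{i,\ell}U_{i\ell}\Phi_{i\ell}(\vxi)\,(\mat E\vec y)_\ell$. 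Finally $(\mat E\vec y)_\ell = y_k$ if $\ell = \ell_k$ for some $k$ and $0$ otherwise, so the $\ell$-sum collapses onto the measurement indices $\{\ell_k\}_{k=1}^{\ny}$, producing exactly the expression obtained for the left-hand side. This gives the adjoint identity and hence~\eqref{eq:disc_obs_adj}.

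The only real obstacle is bookkeeping: one must keep the Hadamard products, the selection matrix $\mat E$, and the diagonal weight matrix straight, and arrange matters so that the two cancellations—$\mat M$ against $\mat M^{-1}$, and the quadrature weights $\vec w$ against $\vec w^{-1}$—are carried out in an order that makes the column selection performed by $\mc E$ on the left match the one performed by $\mat E\vec y$ on the right. No analytic subtlety arises; beyond Lemma~\ref{lem:time_space}, the invertibility of $\mat M$ is the only structural fact used.
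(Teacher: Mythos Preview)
Your proof is correct and follows essentially the same approach as the paper: both verify the adjoint identity $\langle \mat B\mat U,\vec y\rangle_2=\llangle \mat U,\mat B^*\vec y\rrangle_{\mat M}$ by direct computation, invoking the identity~\eqref{eq:MT_identities_one} from Lemma~\ref{lem:time_space} and arranging the two cancellations $\mat M\mat M^{-1}=\mat I$ and $w_\ell\cdot w_\ell^{-1}=1$. The only difference is cosmetic—you pass to entrywise index sums immediately, whereas the paper keeps the argument in column/matrix notation a bit longer before unwinding.
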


\begin{proof}

In what follows, we denote the
$\ell$th column of a matrix $\mat A$ by $\mat A_{*, \ell}$. Moreover, 
since the dependence of $\mat{B}$ on $\vxi$ does not 
play a role here, for simplicity, we suppress $\vxi$.

Note that by the definition of the extraction operator, 
$$
\mc{E} \mat U \odot \mc{E} \mat\Phi = \mc{E}(\mat \Phi \odot \mat U).
$$
Thus, by~\eqref{eq:disc_obs}, we have the identity
\begin{equation}
\label{eq:extract_identity}
\mat B \mat U = \mat E^\top(\mat U \odot \mat \Phi)^\top \vec 1.
\end{equation}
Using~\eqref{eq:extract_identity} and writing the matrix-vector product 
$(\mat U \odot \mat \Phi) \mat E \vec y$ as a sum, we have 
$$
\langle \mat B \mat U, \vec y \rangle_2 = \vec 1^\top (\mat U \odot \mat \Phi) \mat E \vec y = \vec 1^{\top} \sum_{\ell=1}^{\nt} (\mat E \vec y)_{\ell} (\mat U \odot \mat \Phi)_{*,\ell}.
$$
It follows directly from the definition of the Hadamard product that $(\mat U \odot \mat \Phi)_{*, \ell} = \vec u_{\ell} \odot \mat \Phi_{*,\ell}$. 
Thus, 
$$
\vec 1^{\top} \sum_{\ell=1}^{\nt} (\mat E \vec y)_{\ell} (\mat U \odot \mat \Phi)_{*,\ell} 
= \vec 1^{\top} \sum_{\ell=1}^{\nt}  (\mat E\vec y)_{\ell}(\vec u_{\ell} \odot \mat \Phi_{*,\ell})
= \vec 1^{\top} \sum_{\ell=1}^{\nt} w_{\ell} \, \vec u_{\ell} \odot \left( \frac{(\mat E\vec y)_{\ell}}{w_{\ell}}\mat \Phi_{*,\ell} \right).
$$
We recognize that the vector $\frac{(\mat E\vec y)_{\ell}}{w_{\ell}}\mat \Phi_{*,\ell}$ is the $\ell \text{th}$ column 
of the matrix $\mat{K} \defeq \mat \Phi \text{diag}\{\mat E \vec y \odot \vec w^{-1}\}$. 
So, we have
\begin{align*}
\vec 1^{\top} \sum_{\ell=1}^{\nt} w_{\ell} \, \vec u_{\ell} \odot \left( \frac{(\mat E\vec y)_{\ell}}{w_{\ell}}\mat \Phi_{*,\ell} \right) 
&= \vec 1^{\top} \sum_{\ell=1}^{\nt} w_{\ell} \left(\mat U \odot \mat K \right)_{\ell}\\
&= \vec 1^{\top} \left(\mat U \odot \mat M \mat M^{-1} \mat K\right) \vec w\\
&= \llangle \mat U, \mat M^{-1} \mat K\rrangle_{\mat M},
\end{align*}
where in the last step we have used~\eqref{eq:MT_identities_one}.
Hence, we yield the desired result, since for every $\vec{y} \in \R^{\ny}$, 
$$\langle \mat B \mat U, \vec y \rangle_2 = 
\vec 1^\top (\mat U \odot \mat \Phi) \mat E \vec y 
= \llangle \mat U, \mat M^{-1} \mat K\rrangle_{\mat M}.$$
\end{proof}

We comment on the efficiency of applying $\mat B$ and its adjoint to elements in
their respective spaces. Given a path parameter $\vxi$, we must form the matrix
$\mat \Phi$. Since $\phi_i$ are compactly supported, $\mat \Phi$ is sparse. We
can build $\mat \Phi$ with a geometric approach---only evaluating basis functions whose
support contains the point $\vr_{\ell}$. Explicit construction and manipulation
of the extraction matrix $\mat E$ can be avoided with indexing.

\boldheading{The inverse problem} We are now ready to formulate the discretized inverse problem. 
The inversion parameter is given by the finite element coefficients $\vec m \in \RNM$ corresponding to the approximation $m_h$ of $m \in \M$. We treat $\vec m$ as a random variable and seek a posterior distribution through Bayesian inversion. 
A key component in this construction is the discretized forward operator
$\mat F: \RNM \to \R^{\ny}$, defined by
\begin{equation}
\mat F(\vxi) \defeq \mat B(\vxi) \mat S.
\end{equation}
Analogous to the infinite-dimensional setting, data are acquired according 
to $\vec y = \mat F(\vxi) \mat m + \vec \eta$, where $\vec \eta$ is the same
Gaussian noise as in~\eqref{eq:data_model}. Under the assumption that $\vec m$
and $\vec \eta$ are uncorrelated, $\vec y \vert \vec m \sim \mc{N}(\mat F \vec
m, \sig^2 \mat I)$. 

The prior law for $\vec m$ is $\mu_{\pr} = \mc{N}(\vec m_{\pr}, \Gpr)$, where $\Gpr$ is the finite-element discretization of the elliptic differential operator in~\eqref{eq:prior_operator}.  Combining the data
likelihood with the prior, we apply Bayes
theorem to obtain the posterior law $\mupoy = \mc{N}(\fmMAPy, \Gpo)$, where 
\begin{equation}
\label{eq:discrete_post}
\Gpo(\vxi) = (\sig^{-2}(\mat F^*\mat F)(\vxi) + \Gpr^{-1})^{-1} \quad \text{and} \quad \fmMAPy(\vxi) = \Gpo(\vxi)\big(\sig^{-2}\mat F^*(\vxi) \vec y + \Gpr^{-1}\vec m_\pr\big).
\end{equation}
The discretized Hessian, which is derived from the variational characterization
of the MAP point, is given by
\begin{equation}
\mat H(\vxi) = \fHmis(\vxi) + \Gpr^{-1}, \quad \text{where} \quad \fHmis(\vxi) = \sig^{-2}(\mat F^{*} \mat F)(\vxi)
\end{equation}
is the data-misfit Hessian. Again, we have the important identity $\mat H(\vxi) = \Gpo^{-1}(\vxi)$.

For a scalable numerical framework we must be able to apply $\Gpo$ to vectors in
$\RNM$ efficiently. We follow the approach outlined
in~\cite{Bui-ThanhGhattasMartinEtAl13}, where a low-rank spectral decomposition
and the Sherman--Morrison--Woodbury formula are used to obtain an approximation
$\Gpor$ to $\Gpo$.  For $\vxi \in \R^{N}$, 
let $\big\{ (\lam_{i} (\vxi), \vec v_{i} (\vxi) ) \big\}_{i=1}^{r}$ be the $r$ leading
eigenpairs of the \emph{prior-preconditioned the data misfit
Hessian}
$\Gpr^{1/2}\fHmis(\vxi) \Gpr^{1/2}$. Then, the action of $\Gpor (\vxi)$ on $\vec m \in \RNM$ is defined by
\begin{equation}
\label{eq:Gpor_apply}
\Gpor (\vxi) \vec m \defeq \Gpr \vec m - \sum_{i=1}^{r} \frac{\lam_{i} (\vxi)}{\lam_{i} (\vxi) + 1} \big \langle \vec m, \Gpr^{1/2} \vec v_{i} (\vxi) \big\rangle_{\mat M} \Gpr^{1/2} \vec v_{i} (\vxi).
\end{equation}

\subsection{The discretized path-OED problem}
\label{sec:disc_path_OED}
Here we discretize the c-optimality criterion and its gradient introduced in Section~\ref{sec:OED}. We then specialize these results to the c-optimality criterion used in our numerical experiments, discussed in Subsection~\ref{sec:copt_specific}.

Recall the infinite-dimensional representation of the c-optimality
criterion in~\eqref{eq:c_optimal}. The corresponding discretized 
criterion is given by 
\begin{equation}
\label{eq:disc_c_opt}
\mat \Psi (\vxi) \defeq \langle \Gpo (\vxi) \vec c, \vec c\rangle_{\mat M}, 
\end{equation}
where $\vec c$ is a fixed vector in $\RNM$.
Also, for $j \in \{1, \dots, N\}$, we recall the formula for $\pj \Psi$ in~\eqref{eq:psi_c_j}. Replacing the present operators with their discrete analogues, we obtain the following derivative formula:
\begin{equation}
\label{eq:disc_c_opt_grad}
\pj \mat \Psi =  -2\sig^{-2} \langle \mat B  \mat S \Gpo \vec c, (\pj\mat B) \mat S \Gpo \vec c\rangle_2.
\end{equation}
It is straightforward to show that for any $\mat U \in \RNMT$, the action of $\pj \mc{B}$ on $\mat U$ is such that
\begin{equation}
\label{eq:discrete_obs_deriv}
(\pj \mat B(\vxi)) \mat U = \pj \big(\mc{E} \mat U \odot \mc{E} \mat \Phi (\vxi) \big)^\top\vec 1 = \big(\mc{E} \mat U \odot \mc{E} \pj \mat \Phi (\vxi) \big)^\top\vec 1.
\end{equation}
Furthermore, the $j\text{th}$ partial of $\mat \Phi: \R^{N} \to \R^{\nx \times \nt}$, defined in~\eqref{eq:pointsource_matrix}, is given by
\begin{equation}
\label{eq:Phi_j}
(\pj \mat \Phi (\vxi))_{i\ell} = \pj \phi_{i}(\vr_{\ell}(\vxi)) = \grad \phi_{i}(\vr_{\ell}(\vxi)) \cdot \pj \vr_{\ell}(\vxi),
\end{equation}
where $i \in \{1, 2, \dots, \nx\}$ and $\ell \in \{1, 2, \dots, \nt\}$. 

\boldheading{The specific criterion} We next consider the goal-functional $Z$ in~\eqref{eq:goal} which 
we use to form the specific c-optimality criterion in Subsection~\ref{sec:copt_specific}. For a fixed $\mat V \in \RNMT$, the discretized analogue of $Z$ is
\begin{equation}
\label{eq:disc_goal}
\mat Z(\vec m) \defeq \llangle \mat V, \mat S \vec m\rrangle_{\mat M} = 
\langle \mat S^{*}
\mat V, \vec m \rangle_{\mat M}, \quad \vec m \in \RNM.
\end{equation}
Hence, taking $\vec c = \mat S^{*} \mat V$ in~\eqref{eq:disc_c_opt}, the specific discretized criterion and its derivative are
\begin{subequations}
\label{eq:disc_psi}
\begin{align}
\mat \Psi (\vxi) 
&= \langle \Gpo (\vxi) \mat S^* \mat V, \mat S^* \mat V \rangle_{\mat M},
\label{eq:disc_psi_def}\\
\pj \mat \Psi 
&= -2\sig^{-2}
\big \langle \mat B (\vxi) \mat S \Gpo (\vxi) \mat S^* \mat V,
\big( \pj \mat B (\vxi) \big) \mat S \Gpo(\vxi) \mat S^* \mat V \big\rangle_2.
\label{eq:disc_psi_grad}
\end{align}
\end{subequations}
The procedure for computing $\mat \Psi(\vxi)$ and $\grad \mat \Psi(\vxi)$ are outlined in Algorithm~\ref{alg:grad_psi}.

\begin{algorithm}[H]
  \caption{Algorithm for computing $\mat \Psi$ in~\eqref{eq:disc_psi_def} and $\grad \mat \Psi$ defined by~\eqref{eq:disc_psi_grad}}
  \begin{algorithmic}[1]
    \State \textbf{Input:} design $\vxi \in \R^{N}$ 
    \State \textbf{Output:} $\mat \Psi(\vxi)$ and $\grad \mat \Psi(\vxi)$
    \State Compute $\vec{c} \defeq \mat S^{*} \mat V$
    \State Compute $\tilde{\vec c} \defeq \Gpor \vec c$
    \hfill\Comment{with $\Gpor$ as in \eqref{eq:Gpor_apply}}
    \State Compute $\mat \Psi(\vxi) = \langle \tilde{\vec c}, \vec c \rangle_{\mat M}$
    \State Compute $\tilde{\mat V} \defeq \mat S \tilde{\vec c}$
    \State Compute ${\vec{d}} \defeq \mat B(\vxi) \tilde{\mat V}$
    \For{$j = 1$ to $N$}
      \State Compute ${\vec{v}}_j \defeq \big( \pj \mat B (\vxi) \big) \tilde{\mat V}$
      \hfill\Comment{with $\pj \mat B (\vxi)$ as in \eqref{eq:discrete_obs_deriv}}
      \State Set $\pj \mat \Psi (\vxi) = -2\sig^{-2}\langle {\vec d}, {\vec{v}}_j \rangle_{2}$
    \EndFor
    \State \Return $\mat \Psi (\vxi)$ and $\grad \mat \Psi (\vxi)$
  \end{algorithmic}
  \label{alg:grad_psi}
\end{algorithm}

\subsection{Computational considerations}
\label{sec:considerations}

We design the discretized path-OED framework so that standard gradient-based optimization algorithms can be used to minimize $\mat \Psi$. The computational cost of this process depends primarily on two components: evaluation of the sensor parameterization $\vr(t;\vxi)$ and repeated application of operators arising from the inverse problem. In practice, the dominant cost is associated with applying the discretized solution operator and its adjoint, which appear multiple times in Algorithm~\ref{alg:grad_psi}, most notably through the low-rank approximation to the posterior covariance operator. While applying the discretized prior covariance operator $\Gpr$ also requires PDE solves, this cost is typically negligible when compared to applications of $\mat S$ and $\mat S^{*}$. Hence, in what follows, we quantify cost in terms of \emph{PDE solves}, meaning applications of $\mat S$ or $\mat S^{*}$.

\boldheading{Forming the observation operator} 
As outlined in Subsection~\ref{sec:bayes_disc}, for a given $\vxi \in \R^{N}$,
applying $\mat B(\vxi)$ and its adjoint $\mat B^{*}(\vxi)$ requires evaluating
the path $\vr(t;\vxi)$ over the temporal discretization
$\{t_{\ell}\}_{\ell=1}^{\nt}$. The expensiveness of this computation depends
entirely upon the selected parameterization. For the path types presented in
this work---Fourier and \Bezier{}---this is very cheap. Specifically, computing
the Fourier path in~\eqref{eq:fourier_path} requires $\Nf$
evaluations of cosine and sine over the set $\{t_{\ell}\}_{\ell=1}^{\nt}$. For the
\Bezier{} path~\eqref{eq:bezier_path}, we can
utilize de Casteljau's algorithm~\cite{boehm1999casteljau}. This can be
efficiently implemented with software like the \texttt{Python} package
\texttt{bezier}~\cite{Hermes2017}, used in our implementation.

\boldheading{Applying the posterior covariance operator} 
In Section~\ref{sec:numerics} we use the low-rank approximation $\Gpor$
in~\eqref{eq:Gpor_apply} to the discretized posterior covariance operator
$\Gpo$. As discussed in Subsection~\ref{sec:bayes_disc}, applying $\Gpor$ to
vectors in $\RNM$ involves solving an eigenvalue problem: we require
the leading eigenvalues and eigenvectors of the \emph{prior-preconditioned data
misfit Hessian} $\sig^{-2}\Gpr^{1/2}(\mat F^{*}\mat F)(\vxi) \Gpr^{1/2}$.  Since
$\mat B(\vxi)$ is a mapping from $\RNMT$ to $\R^{\ny}$ and $\mat F(\vxi) = \mat
B(\vxi) \mat S$, the rank of $(\mat F^{*}\mat F)(\vxi)$ is at most $\ny$. Thus, we choose the target rank $r$ such that $r \le \ny$, and computing the $r$ leading eigenpairs requires $k \geq r$ iterations of the Lanczos method. Since each iteration involves applying both $\mat S$ and $\mat S^{*}$, the resulting computation requires $2k$ PDE solves.

\boldheading{Computing the criterion and gradient} 
Referring to Algorithm~\ref{alg:grad_psi}, we observe that the computations required to evaluate $\mat\Psi$ are reused when forming $\grad \mat \Psi$. Hence, we can accumulate the total number of PDE solves required for both computations. Applying $\mat S^{*}$ to $\mat V$ in Line~3 of the Algorithm~\ref{alg:grad_psi} requires a single PDE solve and, as noted above, computing $\Gpor(\vxi)(\mat S^{*}\mat V)$ in Line~4 results in $2k$ additional solves. Thus, evaluating $\mat \Psi$ requires $2k + 1$ PDE solves. To compute the gradient, we apply $\mat S$ once more in Line~6, bringing the total number of PDE solves in Algorithm~\ref{alg:grad_psi} to $2k + 2$, where $k \ge r$ denotes the number of Lanczos iterations required to compute the rank-$r$ approximation $\Gpor$. 

\boldheading{A PDE-free approach}
We remark that PDE solves can be entirely avoided in Algorithm~\ref{alg:grad_psi} by replacing the discretized solution operator with a surrogate. However, this is not achieved by simply substituting $\mat S$ with a precomputed matrix, since its codomain $\RNMT$ is a space of matrices equipped with the inner product shown in~\eqref{eq:MT_prod}. Consequently, surrogates must respect the underlying Hilbert space structure; for example, neural operators or tensor decompositions constructed to preserve these inner products can be considered.

\section{Computational results}
\label{sec:numerics}

We now demonstrate our framework on an example problem. Two variants are considered: one with Bézier paths and the other Fourier paths. The aim is not to compare their performance, which depends on the specific problem, but to illustrate that our path-OED formulation is stable under a change in path type.

\subsection{A motivating problem}

Consider the convection-diffusion equation 
\begin{equation}
\label{eq:conv_diff}
\begin{aligned}
u_t - \alpha \Delta u + \vec F \cdot \grad u &= m(\vx)a(t) && \quad \text{in } \Om \times T, \\
u &= 0, && \quad \text{on } E_0 \times T,\\
\grad u \cdot \vec n &= 0, && \quad \text{on } E_{n} \times T,\\
u(\vx, 0) &= u_0(\vx, t), && \quad \text{in } \Om.
\end{aligned}
\end{equation}
This equation models the diffusive transport of a contaminant concentration over
time.  We let the domain $\Om$ be $(0,1)^2$ with boundary $\partial \Omega = E_0
\cup E_n$, where $E_0 \cap E_n = \emptyset$.  The diffusion constant $\alpha$ is
set to $\alpha = 0.15$.  We specify the velocity field $\vec F$ in the numerical
experiments in Sections \ref{sec:bezier_results} and \ref{sec:fourier_results}.
For all numerical experiments we assume $u_{0} \equiv 0$. 

The inverse problem seeks to estimate $m$ in~\eqref{eq:conv_diff} using measurements 
of $u$ collected by a moving sensor. Here,  the solution operator $\mc{S}$ maps
the inversion parameter $m$---the spatial component of the source term---to the
solution $u$ of~\eqref{eq:conv_diff}.  Applying the adjoint of $\mc{S}$ to a
vector requires an adjoint PDE solve.  This is described in
Appendix~\ref{appdx:solution}, in the discretized setting.
We use a Gaussian prior as discussed in Section~\ref{sec:inverse_problem}.
Specifically, we use $(a_1, a_2) = (5.5\times10^{-1}, 6 \times 10^{-3})$
in~\eqref{eq:prior_operator}.  As mentioned in Section~\ref{sec:inverse_problem}, we assumed that observation errors are constant and uncorrelated. Subsequently, we take the noise variance in the following experiments to be
$\sig^{2} = 10^{-3}$, resulting in approximately $1\%$ noise.

The path-OED problem consists of minimizing the c-optimality criterion $\Psi$
discussed in Subsection~\ref{sec:copt_specific}. Recall that $\Psi$ characterizes
the variance of a goal-functional $Z = \llangle v, u(m) \rrangle$ where $u(m) =
\mc{S} m$ is the solution to~\eqref{eq:conv_diff} and $v \in \U$. We choose $v$
to be an indicator function for a spatiotemporal region $\Om_{v} \times T_{v}$.
Here, $\Om_{v} \subset \Om$ is open and $T_{v} \subset T$ is an interval such
that $t_{v} > t$ for all $(t_{v}, t) \in T_{v} \times \Ty$. Subsequently, $v$ is
defined by
\begin{equation}
\label{eq:space_time_ind}
v(\vx, t) \defeq
\begin{cases}
1, & (\vx, t) \in \Om_{v} \times T_{v},\\
0, & (\vx, t) \notin \Om_{v} \times T_{v},
\end{cases}
\end{equation}
so our example goal-functional is then
\begin{equation}
\label{eq:numerics_goal}
Z(m) = \llangle v, u(m) \rrangle = \int_{T_{v}} \int_{\Om_{v}} v(\vx, t) u(\vx, t) \, d\vx \, dt.
\end{equation}
Hence, $Z$ quantifies the concentration of the solution $u$ over some
spatial region $\Om_{v}$ and time interval $T_{v}$. 

For the following analysis, we note that $Z(m)$ is a univariate random variable. Specifically, in the case that $\mu = \mc{N}(\bar{m}, \mc{C})$ is a Gaussian probability law, $Z$ is Gaussian with expectation and variance given by
\begin{equation}
\label{eq:goal_mean_and_var}
\mE_{\mu} \{ Z(m) \} = \langle \mc{S}^{*} v, \bar{m} \rangle \quad  \text{and} \quad \mV_{\mu} \{ Z(m) \} = \langle \mc{C} \mc{S}^{*}v, \mc{S}^{*}v \rangle.
\end{equation}
Hence, we can obtain an analytic formula for the probability density function (PDF) associated with $Z(m)$. When $\mu = \mupoy$, this PDF is termed the \emph{posterior goal-density} and is used to assess how effectively an optimal design infers the synthetic true goal-value.

We must discretize $v$ to form the discrete optimality criterion $\mat \Psi$ in~\eqref{eq:disc_psi}. For the times $\{t_{\ell}\}_{\ell=1}^{nt}$ in the discretization of $T$, define the indicator vector $\vec v_{t} \in \R^{\nt}$ by
\begin{equation}
\label{eq:time_mask}
(\vec v_{t})_{\ell} \defeq
\begin{cases}
1, & \quad t_{\ell} \in T_{v},\\
0, & \quad t_{\ell} \not\in T_{v}.
\end{cases}
\end{equation}
Similarly, let $\vec v_{\vx} \in \RNM$ be the finite element coefficients for an indicator function on $\Om_{v}$. Then, the snapshot matrix corresponding to the discretization of $v$ is $\vec v_{\vx} \vec v_{t}^{\top} \in \RNMT$ and the discretized criterion is
\begin{equation}
\label{eq:psi_indicator}
\mat \Psi(\vxi) = \langle \Gpo(\vxi) \mat S^* (\vec v_{\vx} \vec v_{t}^{\top}), \mat S^* (\vec v_{\vx} \vec v_{t}^{\top}) \rangle_{\mat M}.
\end{equation}

Lastly, we discuss the discretization level and techniques used to solve the path-OED problem. For all experiments, we use $\nx = 35^2$ uniformly spaced spatial grid points and $\nt = 400$ time steps. Finite element discretization is performed using the \texttt{FEniCS} software package~\cite{fenics2015}.
As discussed in Remark~\ref{rmk:moll}, the optimality criterion is nonconvex with potentially multiple local minima. Thus, we minimize $\Psi$ with the limited-memory Broyden–Fletcher–Goldfarb–Shanno (L-BFGS) algorithm~\cite{byrd1995limited}, as implemented in \texttt{SciPy}~\cite{2020SciPy-NMeth}. To address sensitivity of the results with respect to the choice of initial iterate, we perform multiple optimization runs with $10$ initial iterates drawn via Latin hypercube sampling~\cite{mckay2000comparison}, using relaxed convergence tolerances. The best candidate is then used to initialize a final run with stricter tolerances.

\subsection{An experiment with \Bezier{} paths}
\label{sec:bezier_results}
Here we demonstrate our framework with \Bezier{} paths. After discussing the
experimental configuration, we solve the inverse problem and visualize some posterior samples. Next, we solve two variants of the path-OED problem: one
where the initial and terminal positions of the paths do not coincide, and one
where they do coincide (resulting in closed \Bezier{} paths).  Lastly, we
perform a study with an obscured region, representing an area where
data are not useful.

\boldheading{Problem setup}
For boundary conditions in~\eqref{eq:conv_diff}, we take $E_0$ to be the union of the \emph{left} and
\emph{top} of $\Om$, and $E_n$ as the union of the \emph{right} and
\emph{bottom} sides. The global time interval is set to $T = [0, 1]$ and the
inversion interval is $\Ty = [0.2, 0.4]$. The sensor makes measurements at every
other time step in the discretization of $\Ty$, resulting in $40$
measurement times. The velocity field and amplitude function
in~\eqref{eq:conv_diff} are set to 
$$
\vec F(\vec x) =
\begin{bmatrix}
1/\sqrt{2}\\
-1/\sqrt{2}
\end{bmatrix}
\quad
\text{and}
\quad
a(t) = - \frac{1}{4} \cos(4 \pi t) + \frac{3}{4}.
$$
We depict the velocity and amplitude in Figure~\ref{fig:bezier_vel_amp}. With this model setup, the concentration alternates between amplification and damping, while being transported in the direction of outflow.
\begin{figure}[ht]
\centering
\includegraphics[height=0.25\textwidth]{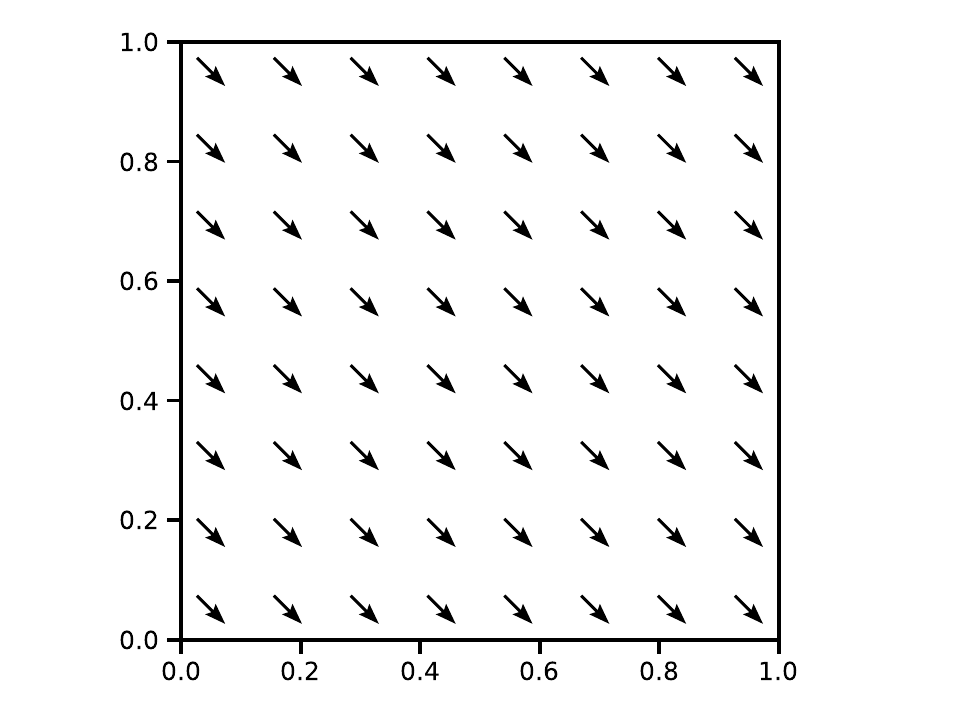}
\includegraphics[height=0.25\textwidth]{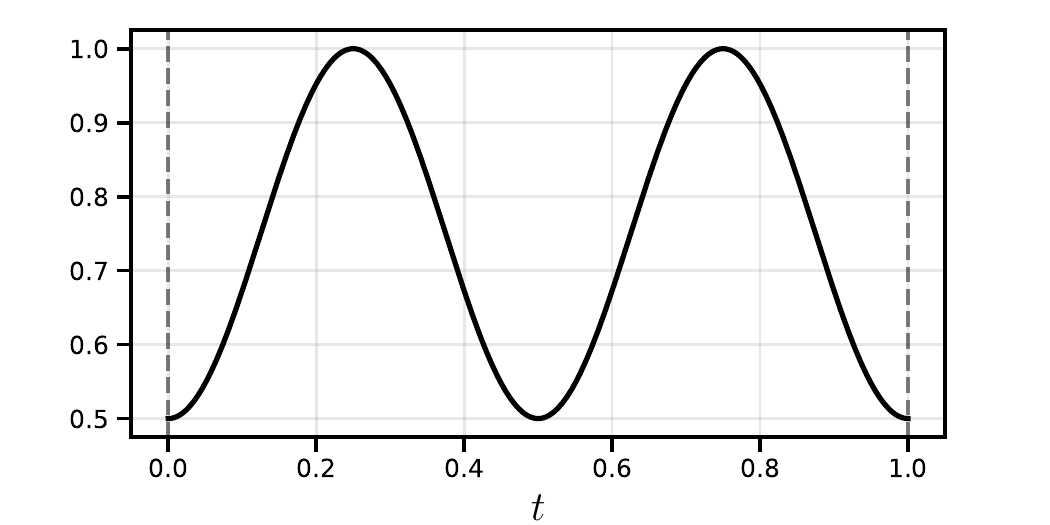}
\caption{The velocity field $\vec F(\vec x)$ (left) and amplitude $a(t)$ (right) used for the Bezier{} experiment.}
\label{fig:bezier_vel_amp}
\end{figure}

Lastly, we specify the subregions used to define the spatiotemporal indicator function $v$ in~\eqref{eq:space_time_ind}. We select
$$
\Om_{v} = (0.5, 0.9) \times (0.1, 0.5) \quad \text{and} \quad T_{v} = [0.8, 1].
$$

\boldheading{Solving the inverse problem} 
To demonstrate our framework we specify a ground truth inversion parameter
$m_{\text{true}}$. This is decided to be a Gaussian-like function oriented in the
upper-left quadrant of $\Om$. Applying the solution operator $\mc{S}$ to
$m_{\text{true}}$, we obtain the true solution $u_{\text{true}} \defeq
\mc{S}m_{\text{true}}$ to~\eqref{eq:conv_diff}. 
We then collect data along a nominal \Bezier{} path $\rb$ of degree 3 
with control points $\{\vec p_{0}, \ \vec p_{1}, \ \vec p_{2}, \ \vec p_{3}\}$ specified by
$$
\vec p_{0} =
\begin{bmatrix}
0.2\\
0.1
\end{bmatrix}, \
\vec p_{1} =
\begin{bmatrix}
0.2\\
2.5
\end{bmatrix}, \
\vec p_{2} =
\begin{bmatrix}
0.8\\
-1.5
\end{bmatrix}, \ 
\vec p_{3} =
\begin{bmatrix}
0.8\\
0.9
\end{bmatrix}.
$$
Figure~\ref{fig:bezier_time_series} depicts $u_{\text{true}}$ at the times $t \in \{0.1,,0.2,,0.3,,0.4\}$ together with the nominal \Bezier{} path $\rb$. The sensor begins its trajectory at $t=0.2$---its initial position indicated by the red dot in the second-from-left panel of Figure~\ref{fig:bezier_time_series}. The path is completed by $t=0.4$, after which the simulation continues until $t=1$.
\begin{figure}[ht]
\centering
\includegraphics[width=\textwidth]{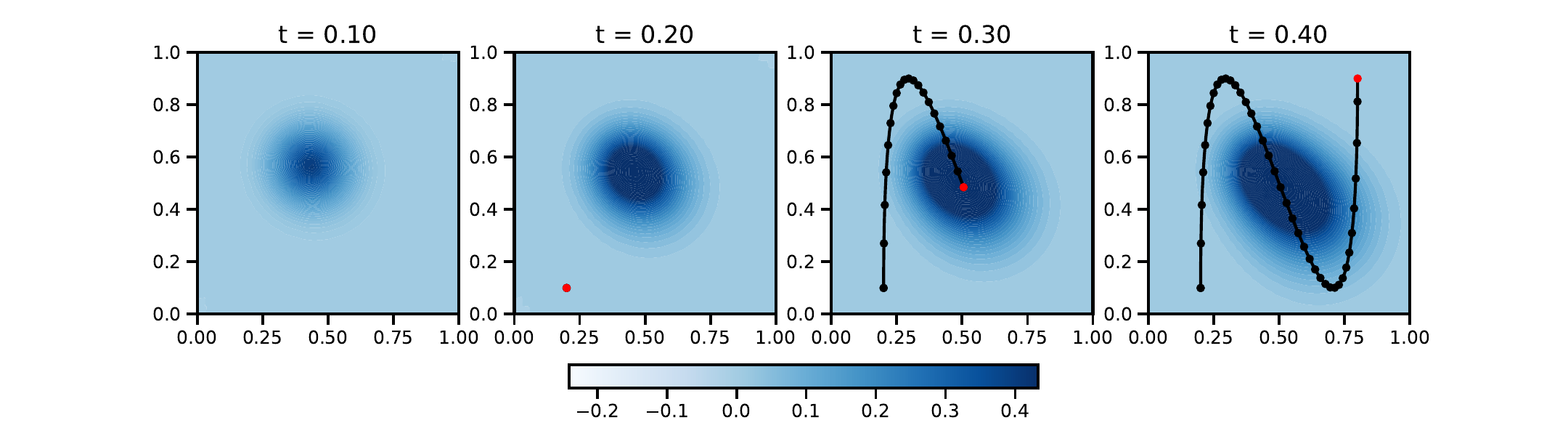}
\caption{Time-series of the solution $u_{\text{true}} = \mc{S}m_{\text{true}}$ and nominal Bézier path $\rb$ at four times. The red dot is the current position of the sensor and the black dots represent measurement locations.}
\label{fig:bezier_time_series}
\end{figure}

We solve the inverse problem with $40$ noisy measurements collected along the
nominal path $\rb$.  Figure~\ref{fig:bezier_samples} depicts $\mtrue$ (left) and
three posterior samples.  In the present experiment, the posterior samples capture
the dominant features of the ground-truth parameter, even with limited data
collected along a nominal path.

\begin{figure}[ht]
\centering
\includegraphics[width=\textwidth]{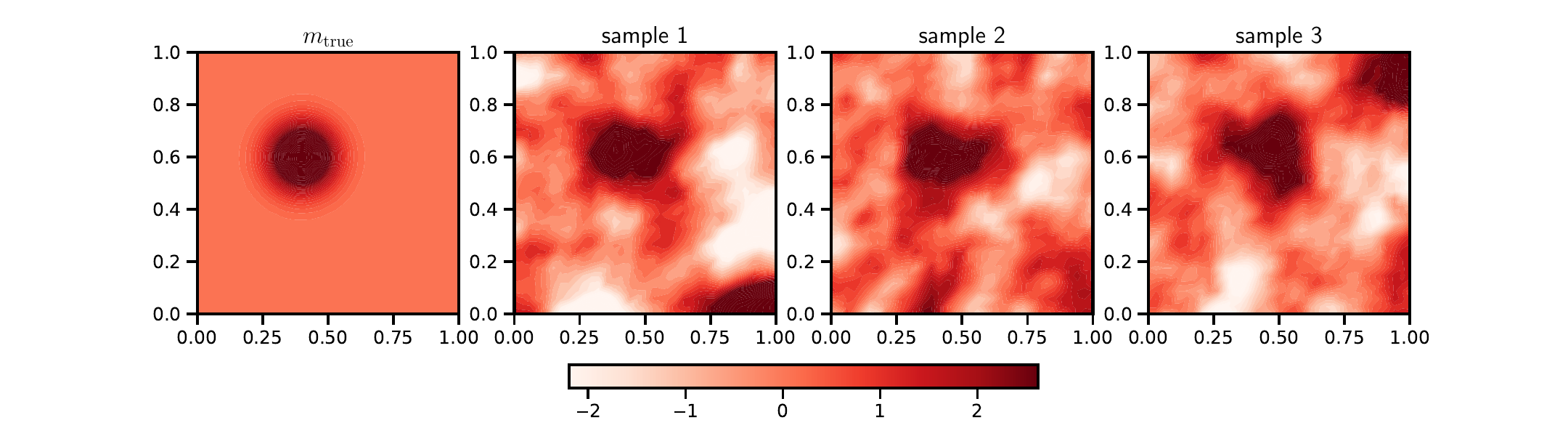}
\caption{The true parameter (left) and three posterior samples corresponding to the Bézier inverse problem.}
\label{fig:bezier_samples}
\end{figure}

\boldheading{Solving the path-OED problem} 
Next we solve the path-OED problem with \Bezier{} curves of degree $\Nb = 5$. Two
cases are considered: (i) the path starts at $(0.8, 0.2)$ and ends at $(0.2,
0.8)$; (ii) both the start and end positions are $(0.8, 0.2)$. Note that when
the endpoints of a $\Nb$-degree \Bezier{} curve are fixed, the design
vector $\xib$ is composed by the interior control points. So in our case, $\xib \in \R^{8}$. To contain the curves in $\Om$,
we require the control points to lie in $\Om$. As discussed in Subsection~\ref{sec:path_constraints}, this translates to
a linear constraint on the design vector.

After solving the path-OED problem, we perform several experiments to assess the performance of the resulting design. First, we overlay the optimal paths on the corresponding \emph{pointwise variance fields}; see~\cite{Bui-ThanhGhattasMartinEtAl13}. This field quantifies posterior variance in the inversion parameter over the spatial domain. Next, we analyze the posterior goal-densities, computed using~\eqref{eq:goal_mean_and_var}, and compare them to the true goal value $Z(\mtrue)$. Finally, to further assess optimality, we generate $1000$ uniformly random designs $\xib \in \R^{8}$, evaluate the associated criterion values $\mat\Psi(\xib)$, and compare their distribution to $Z(\mtrue)$. The results are shown in Figure~\ref{fig:loop_and_path}.

\begin{figure}[htbp]
\centering
\setlength{\tabcolsep}{6pt} 
\begin{tabular}{ccc}
\includegraphics[width=0.34\textwidth]{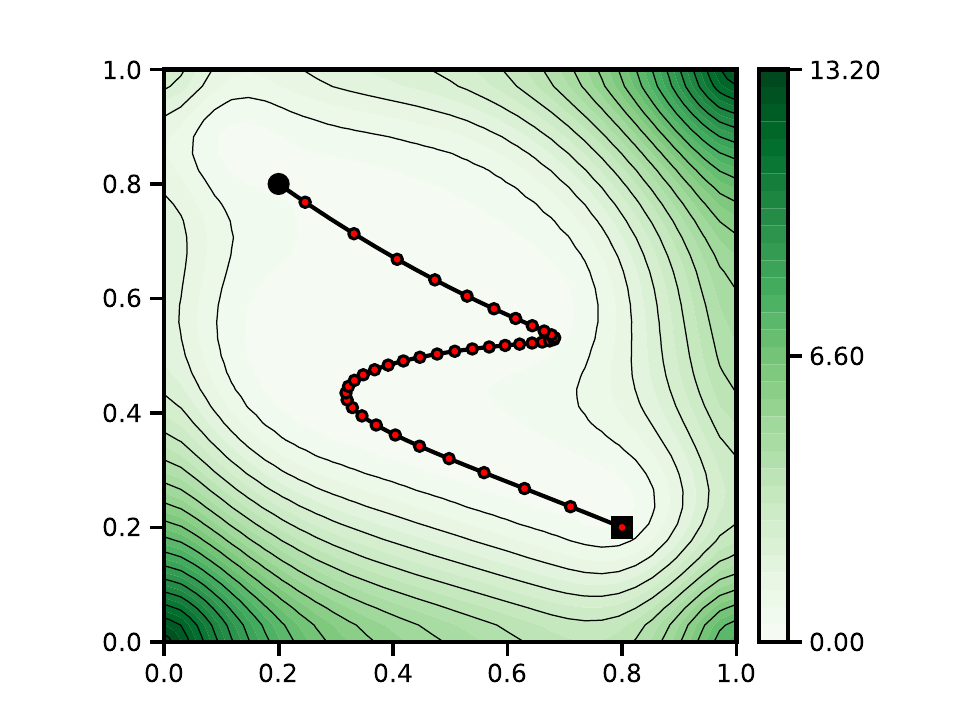} &
\includegraphics[width=0.26\textwidth]{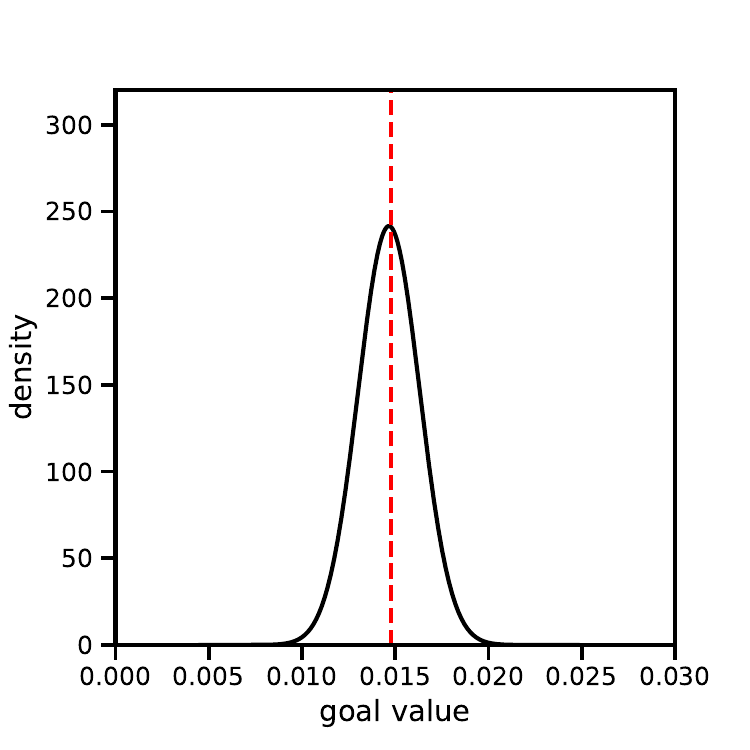} &
\includegraphics[width=0.27\textwidth]{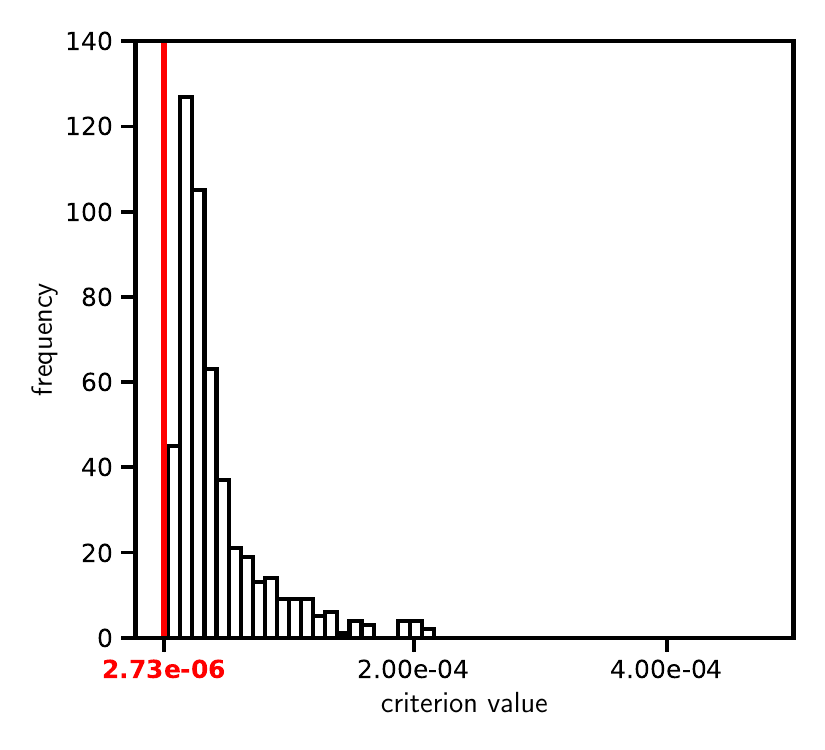} \\
\includegraphics[width=0.34\textwidth]{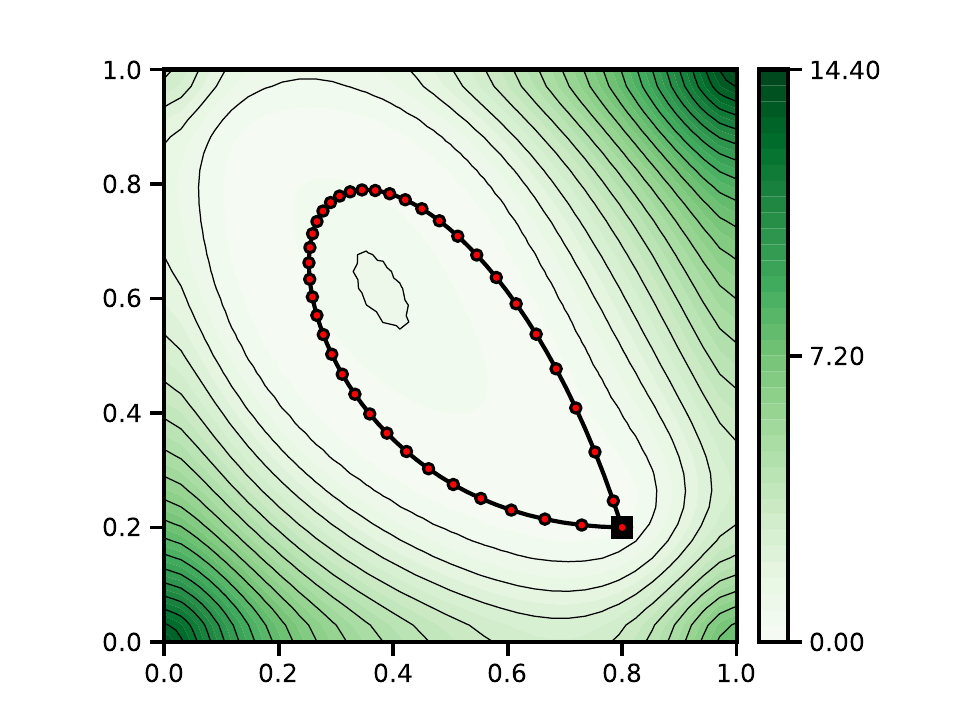} &
\includegraphics[width=0.26\textwidth]{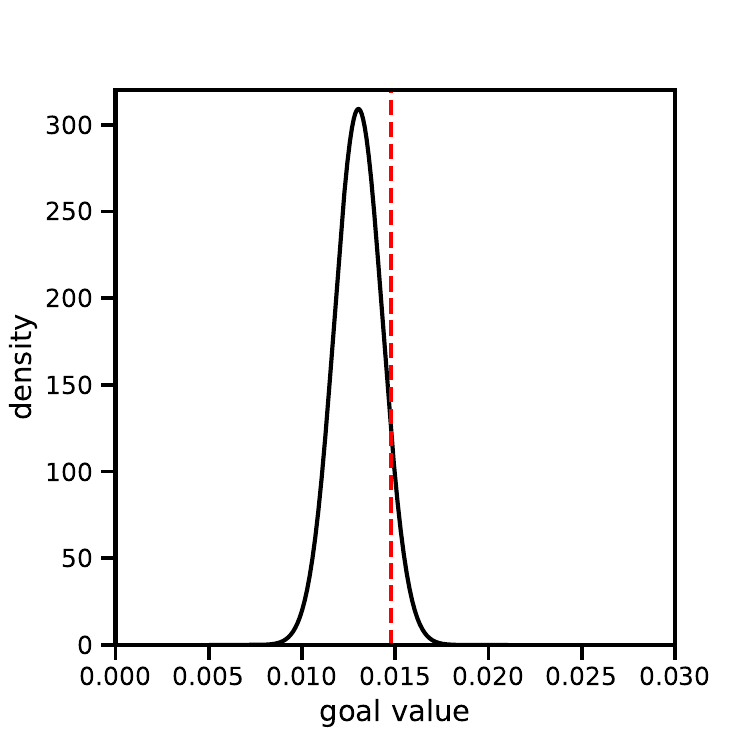} &
\includegraphics[width=0.27\textwidth]{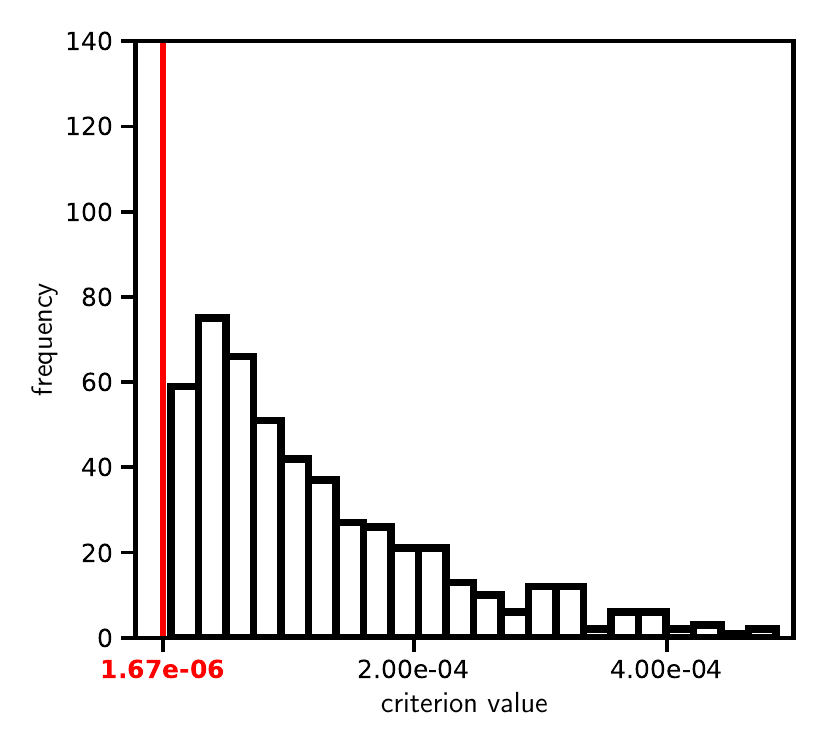}
\end{tabular}
\caption{Top: Results for a degree $\Nb=5$ \Bezier{}~curve where endpoints do not coincide. Bottom: Results for a degree $\Nb=5$ Bezier 
curve where endpoints do coincide. Left: Optimal path and pointwise parameter variance field. The square is the initial position of the path and circle is the terminal position. Middle: Posterior goal-density (true goal-value is the dotted vertical red line). Right: Optimal criterion value (vertical red line) and histogram of criterion values corresponding to uniformly random paths.}
\label{fig:loop_and_path}
\end{figure}

We first comment on the top row of Figure~\ref{fig:loop_and_path}; the case
where the initial and terminal positions of the optimal \Bezier{} path are
different. As seen in Figure~\ref{fig:loop_and_path}~(top-left), the optimal path describes a
Z-like trajectory. Where the sensor the sensor makes a sharp turn, a large amount of data is collected. This indicates that data in these regions are informative for goal inference. The posterior variance field plotted in the top-left of Figure~\ref{fig:loop_and_path} shows that the optimal design leads to preferential uncertainty reduction in the inversion parameter. That is, uncertainty is not reduced in every direction of $\M$, only in the important directions. 
The posterior goal-density in the top-middle of Figure~\ref{fig:loop_and_path} characterizes both the accuracy and certainty in goal inference. We note that the density is centered about the true goal-value and caution that the present behavior is
influenced by the choice of the true parameter and data noise.  
Lastly, we consider the histogram of criterion values in Figure~\ref{fig:loop_and_path}~(top-right).
We note that the optimal criterion value is lower when compared to values resulting from random designs. In fact, this difference is of
order $10^{2}$. This implies that a suboptimal choice of the sensor path can lead to
notably suboptimal reduction in the posterior uncertainty of $Z$.

We next repeat this study for the case where the initial and terminal positions coincide; see the bottom row of Figure~\ref{fig:loop_and_path}.
Here, we super-impose the computed optimal path
over the posterior variance field in
Figure~\ref{fig:loop_and_path}~(bottom-left).  The posterior goal-density in 
Figure~\ref{fig:loop_and_path}~(bottom-middle) has a smaller variance than the
corresponding result in the top panel, but is less-centered around the true
goal-valued. Again, the relative position of the posterior goal-density 
to the true value should be carefully interpreted. It is worth noting, 
however, 
that the predicted posterior goal-density includes the ground-truth parameter in 
its high-probability region. 
Similar to the results for the non-coincident endpoints, the optimal criterion
value is less than all the criterion values corresponding to randomly generated
designs. This is shown in Figure~\ref{fig:loop_and_path}~(bottom-right).
Interestingly, the criterion values corresponding to the random designs span a
wider range of values when compared to the top row results.

To further quantify the effectiveness of the optimal paths shown in
Figure~\ref{fig:loop_and_path}, we next examine the \emph{coefficient of
variation} for the goal functional. Given a probability law $\mu$, the coefficient of variation, denoted by $\cv(\mu)$,
is defined as the ratio of the standard deviation of $Z(m)$ to the
absolute value of its mean. So, $\cv(\mu)$ is a unitless measure of uncertainty that we can calculate with the formulas in~\eqref{eq:goal_mean_and_var}. Computing this value for the prior measure yields $C(\mu_{\pr}) = 0.248$. For the posterior measure
corresponding to the results in the top row of Figure~\ref{fig:loop_and_path},
we have $\cv(\mupoy) = 0.119$, and for the bottom row, $\cv(\mupoy) =
0.109$. This results in a $\approx 52\%$ reduction the in $\cv$ when the initial and terminal endpoints of the path coincide, and when
the endpoints do coincided, the reduction is $\approx 56\%$. Thus, the optimal
design corresponding to the bottom row of Figure~\ref{fig:loop_and_path} yields
slightly greater reduction in goal uncertainty than the other design.

\boldheading{A study with an obscured region and non-coincident endpoints} 
In this study we specify that the \Bezier{} paths have initial position
$(0.8, 0.2)$ and terminal position $(0.2, 0.8)$, and demonstrate our ability to
solve the path-OED problem with an obscured region $\Ovoid$; see Subsection~\ref{sec:path_constraints}. 
Interpreting the
model~\eqref{eq:conv_diff} as representing the concentration of some
contaminant, $\Ovoid$ may describe a region that is obscured or where the sensor
is unable to collect data effectively due to environmental factors. We take $\Ovoid$ to be a
disk of radius $\Rvoid = 0.16$ centered about $\xvoid = (0.5, 0.5)$. The radial basis function
$\void$ in~\eqref{eq:void}, used to approximate an indicator function for
$\Ovoid$, is parameterized with $\beta = 0.01$.

\begin{figure}[htbp]
  \centering

  \begin{minipage}[t]{0.245\textwidth}
    \centering $N_{b} = 4$
  \end{minipage}
  \hfill
  \begin{minipage}[t]{0.245\textwidth}
    \centering $N_{b} = 5$
  \end{minipage}
  \hfill
  \begin{minipage}[t]{0.245\textwidth}
    \centering $N_{b} = 8$
  \end{minipage}
  \hfill
  \begin{minipage}[t]{0.245\textwidth}
    \centering $N_{b} = 20$
  \end{minipage}

  \vspace{0.5ex}

  \includegraphics[width=0.23\textwidth]{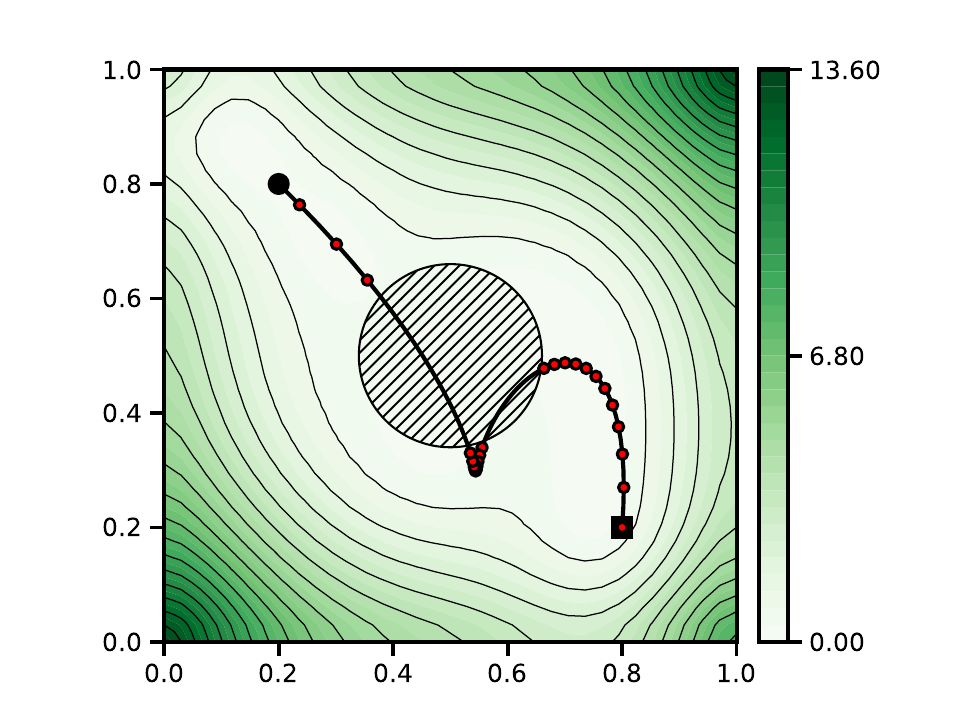}
  \hfill
  \includegraphics[width=0.23\textwidth]{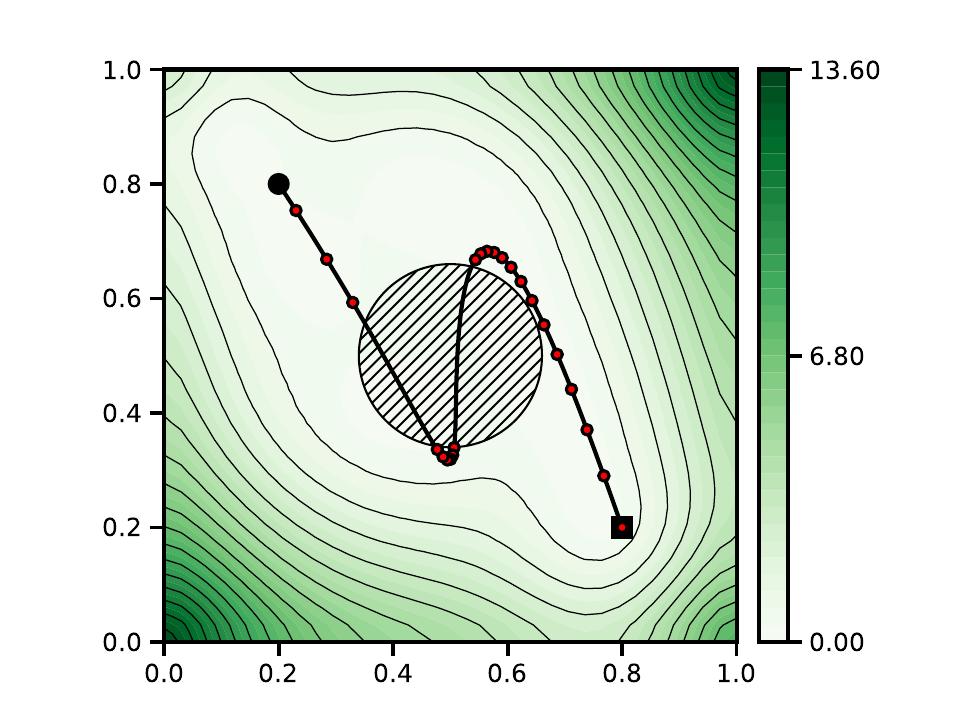}
  \hfill
  \includegraphics[width=0.23\textwidth]{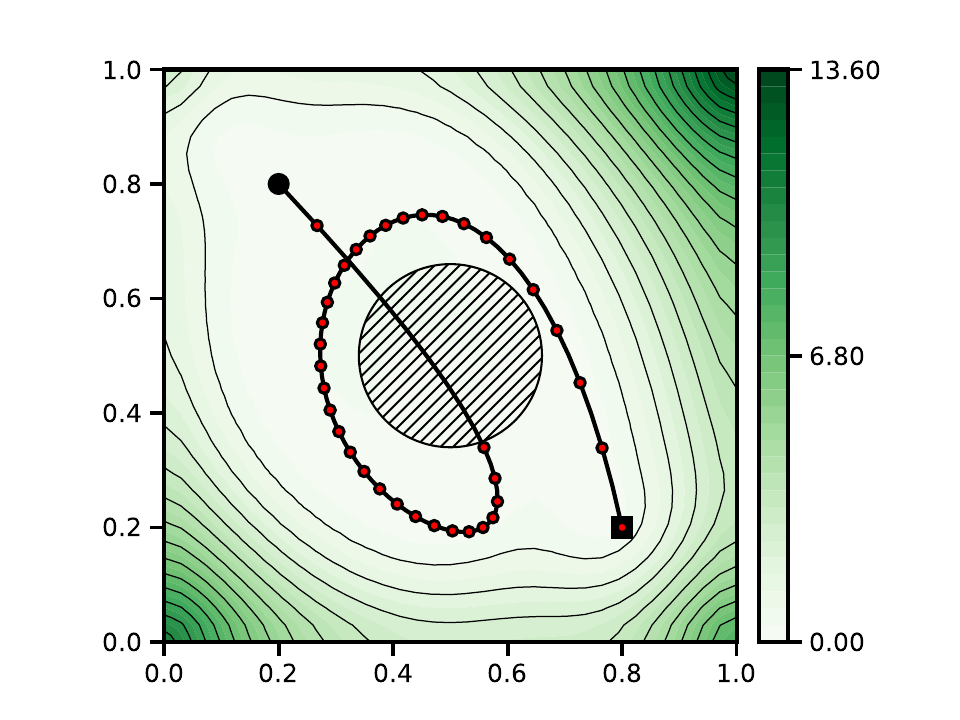}
  \hfill
  \includegraphics[width=0.23\textwidth]{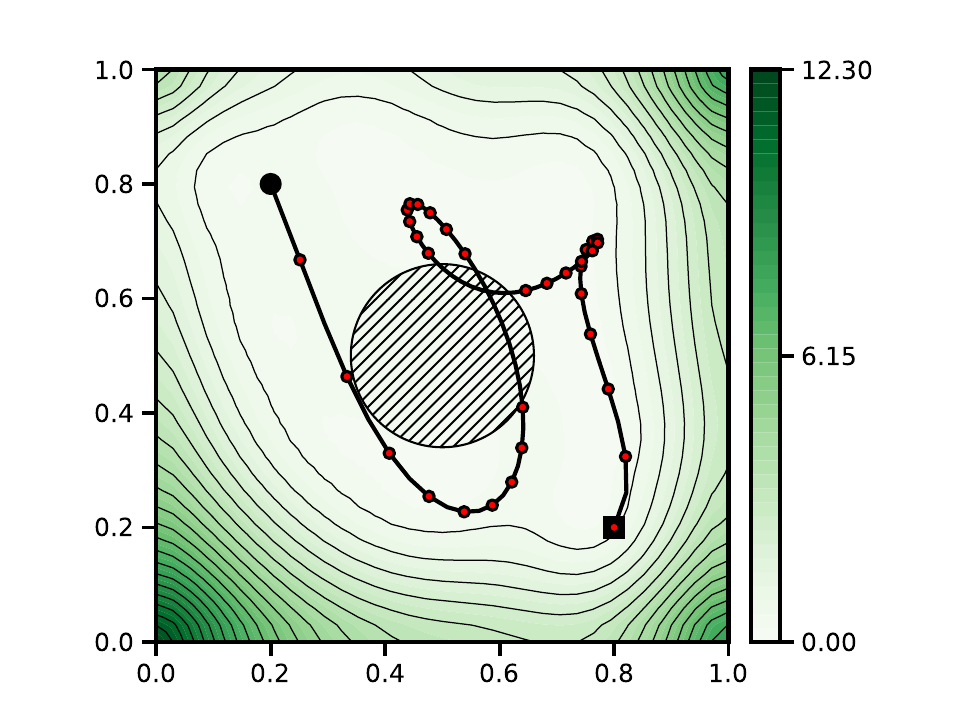}

  \vspace{1ex}

  \includegraphics[width=0.23\textwidth]{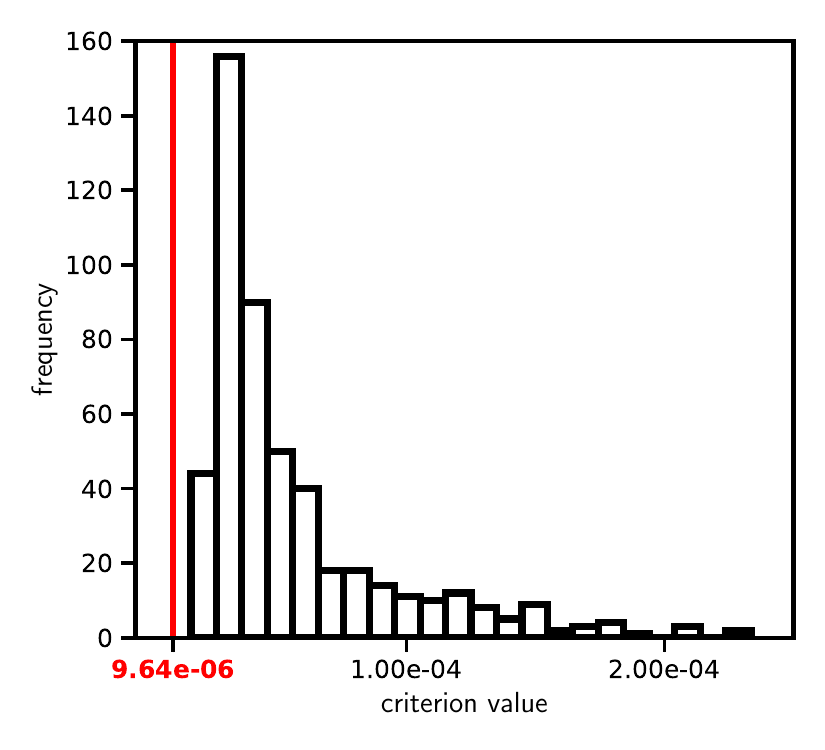}
  \hfill
  \includegraphics[width=0.23\textwidth]{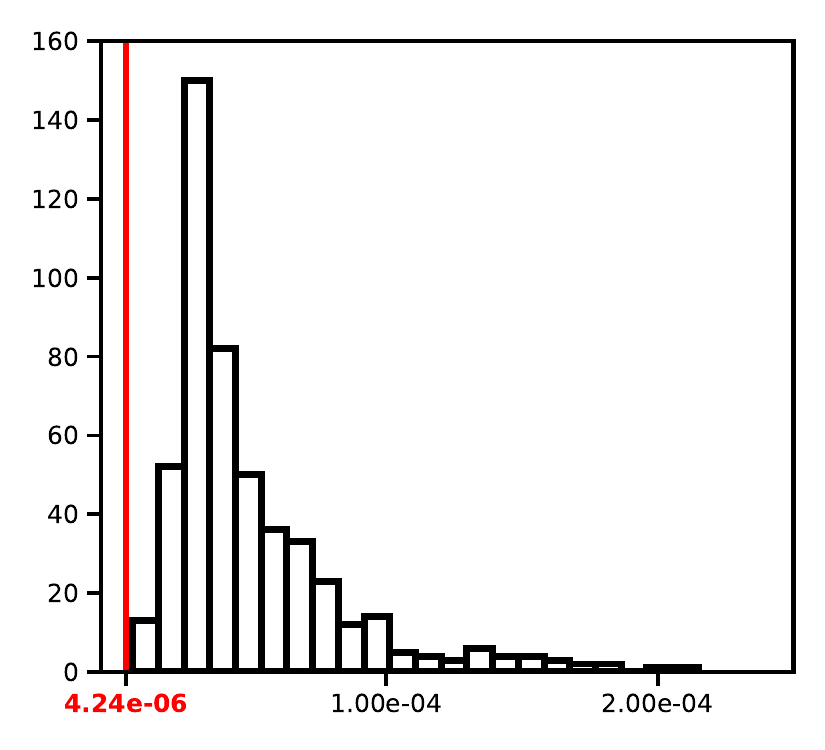}
  \hfill
  \includegraphics[width=0.23\textwidth]{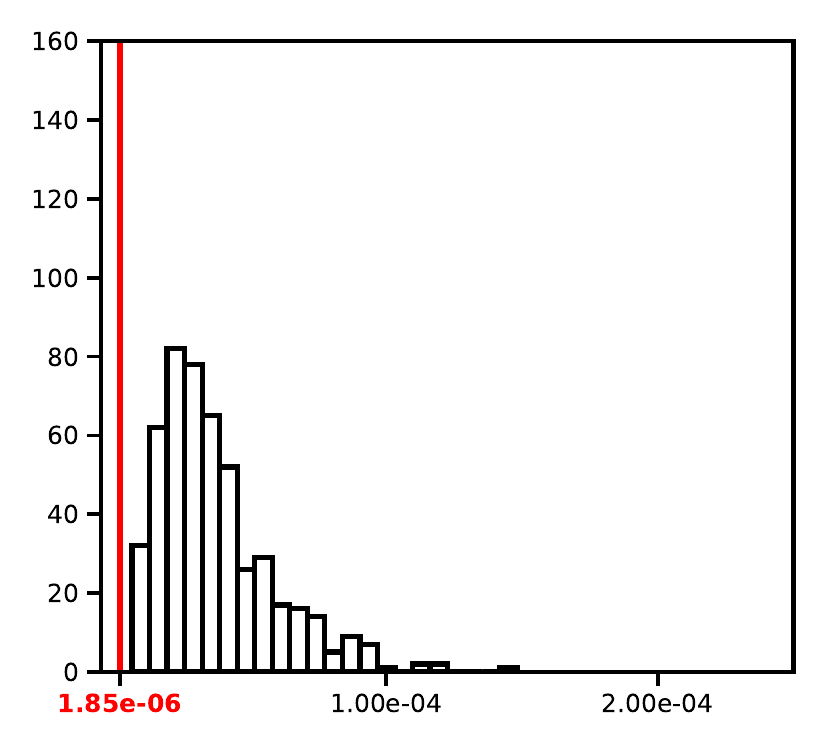}
  \hfill
  \includegraphics[width=0.23\textwidth]{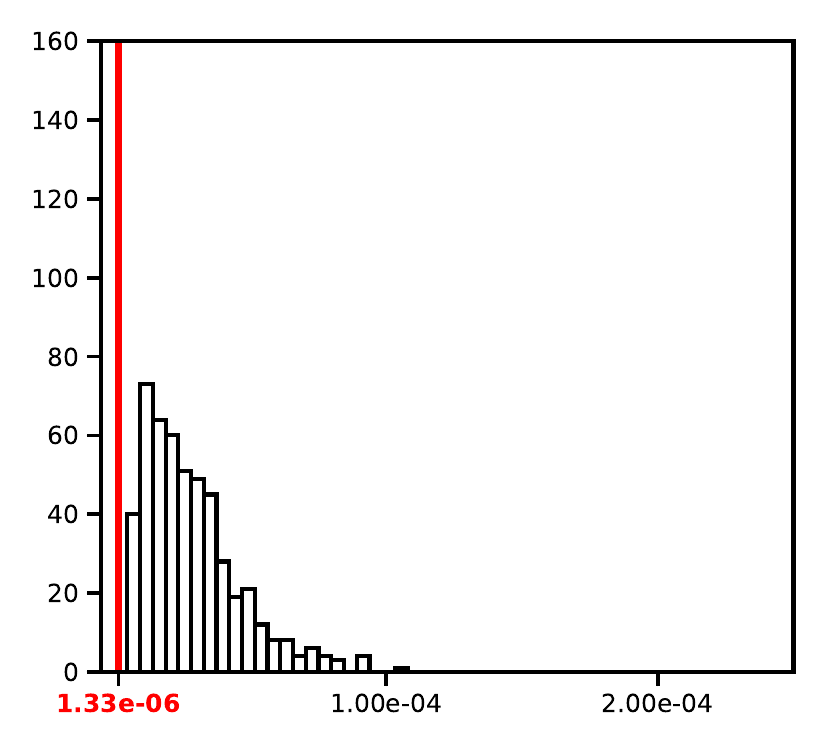}

  \caption{Top: optimal paths for $N_b \in \{4, 5, 8, 20\}$ and corresponding posterior variance fields. The square is the initial position of the path and circle is the terminal position. The obscured region is represented by the shaded disk. Bottom: Histogram of criterion values for uniformly randomly generated designs. The optimal criterion value is the red vertical line.}
  \label{fig:bezier_ends_histogram}
\end{figure}

In this experiment we llustrate how the degree of the \Bezier{} curve
impacts the results. To this end, we solve the path-OED problem with curves of
degree $\Nb \in \{4,5,8,20\}$; see Figure~\ref{fig:bezier_ends_histogram}.
A key observation from the top row of Figure~\ref{fig:bezier_ends_histogram}
is that the optimal paths enter the obscured region. This means that data is willingly sacrificed to explore more informative ares of
$\Om$. This claim is exemplified with the $N_b = 8$ case. Here, the sensor
completes most of a circumnavigation around the disk, then translates almost
directly through the center to reach the terminal position.
We also comment on the histograms in the bottom row of
Figure~\ref{fig:bezier_ends_histogram}. The optimal criterion value for the $\Nb
= 4$ case is $9.64\times 10^{-6}$, and for $\Nb = 20$, this value is $1.33\times 10^{-6}$. Thus,
increasing the design size by a factor of $5$ results in less than an order $10$
magnitude of improvement. This indicates that a relatively small degree number
can be sufficient for informative designs. In all cases, the optimal criterion value
is less than the criterion value corresponding to randomly generated designs.
However, this discrepancy is greatest for a low degree number. This implies that
path-OED is more effective when the number of control points is relatively
small.

Now we revisit the earlier observation that optimal curves enter the obscured
region. To provide further insight, we specify two intuitive paths that
navigate opposite sides of $\Ovoid$, instead of passing through. We compare
the effectiveness of the optimal and selected paths by examining their respective
posterior goal-densities. Figure~\ref{fig:bezier_path_vs_dens} shows the optimal and
specified paths (left) for $N_b = 5$ and corresponding posterior goal-densities (right). 
These results support our observation about quality over
quantity of data. Although the selected paths navigate different sides of the
obscured region, the widths of the corresponding posterior goal-densities are
comparable. A sensor that travels along the optimal path looses many
measurements by navigating through the obscured region, but is able to
acquire more informative data. This observation is corroborated by the corresponding posterior goal-density, whose variance is substantially smaller when compared to the prescribed paths; see Figure~\ref{fig:bezier_path_vs_dens} (right).

\begin{figure}[ht]
\centering
\includegraphics[height=5cm]{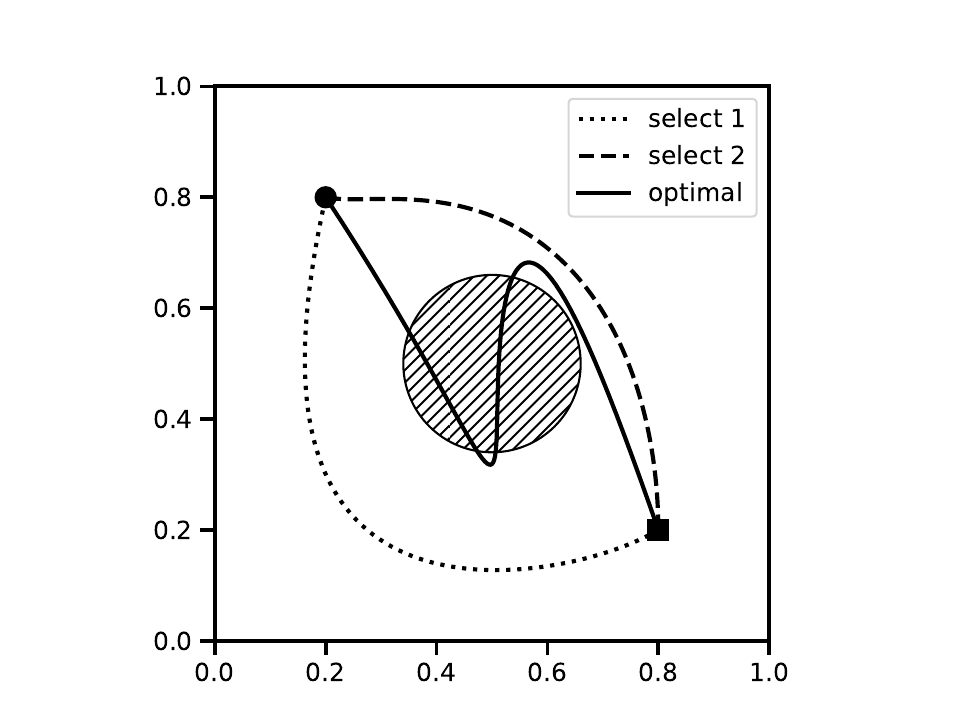}
\includegraphics[height=5cm]{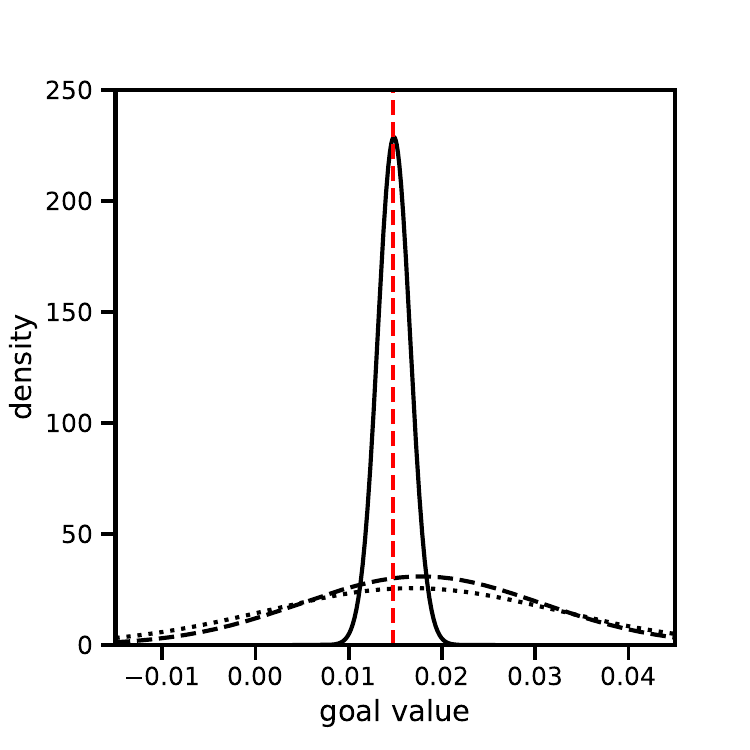}
\caption{Left: An optimal \Bezier{} path and two selected paths of degree $N_{b} = 5$. Right: Posterior goal-densities corresponding to the optimal and selected paths. The red dotted line is the true goal-value.}
\label{fig:bezier_path_vs_dens}
\end{figure}

\boldheading{A study with an obscured region and coincident endpoints} 
We repeat the previous experiment except we let the initial and terminal position
of the sensor be $(0.8, 0.2)$, resulting in closed paths. 
Figure~\ref{fig:bezier_loop_histogram} shows optimal \Bezier{} paths of degree
$N_b \in \{4, 5, 8, 20\}$, posterior variance fields, and histograms of
criterion values generated from evaluating the criterion at uniformly random designs.
For $N_b \in \{4, 5, 8\}$, we see that the paths loop around the obscured
region. When $N_b = 20$, this pattern is broken and the sensor performs a less
intuitive maneuver. In this case, it is valuable to sacrifice data to obtain
more informative measurements. Turning to the bottom row of
Figure~\ref{fig:bezier_loop_histogram}, we find that the degree $N_b \in \{4,
5\}$ optimal curves outperform the analogous results shown in
Figure~\ref{fig:bezier_ends_histogram} in terms of criterion value. However, as
we increase the size of the design, this pattern disappears and performance is
comparable.

\begin{figure}[htbp]
  \centering

  \begin{minipage}[t]{0.245\textwidth}
    \centering $N_{b} = 4$
  \end{minipage}
  \hfill
  \begin{minipage}[t]{0.245\textwidth}
    \centering $N_{b} = 5$
  \end{minipage}
  \hfill
  \begin{minipage}[t]{0.245\textwidth}
    \centering $N_{b} = 8$
  \end{minipage}
  \hfill
  \begin{minipage}[t]{0.245\textwidth}
    \centering $N_{b} = 20$
  \end{minipage}

  \vspace{0.5ex}

  \includegraphics[width=0.23\textwidth]{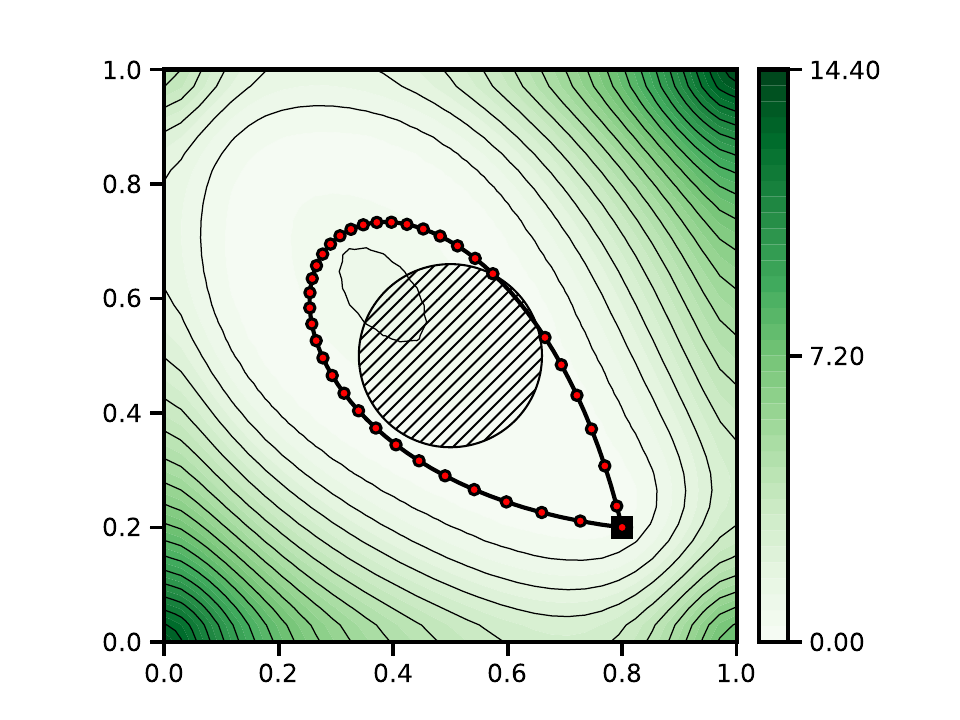}
  \hfill
  \includegraphics[width=0.23\textwidth]{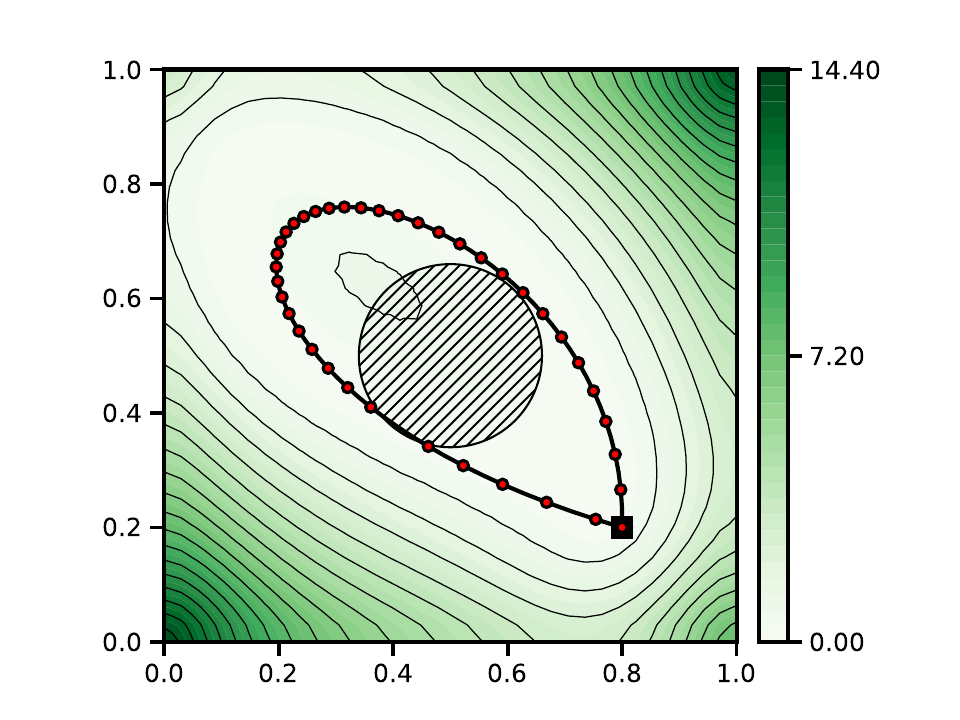}
  \hfill
  \includegraphics[width=0.23\textwidth]{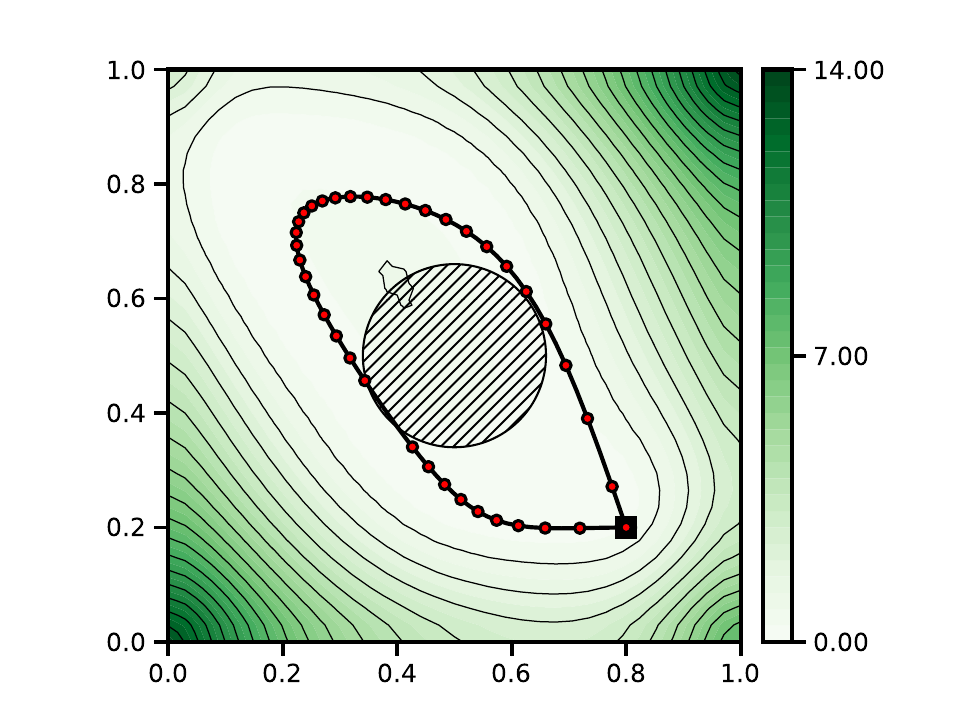}
  \hfill
  \includegraphics[width=0.23\textwidth]{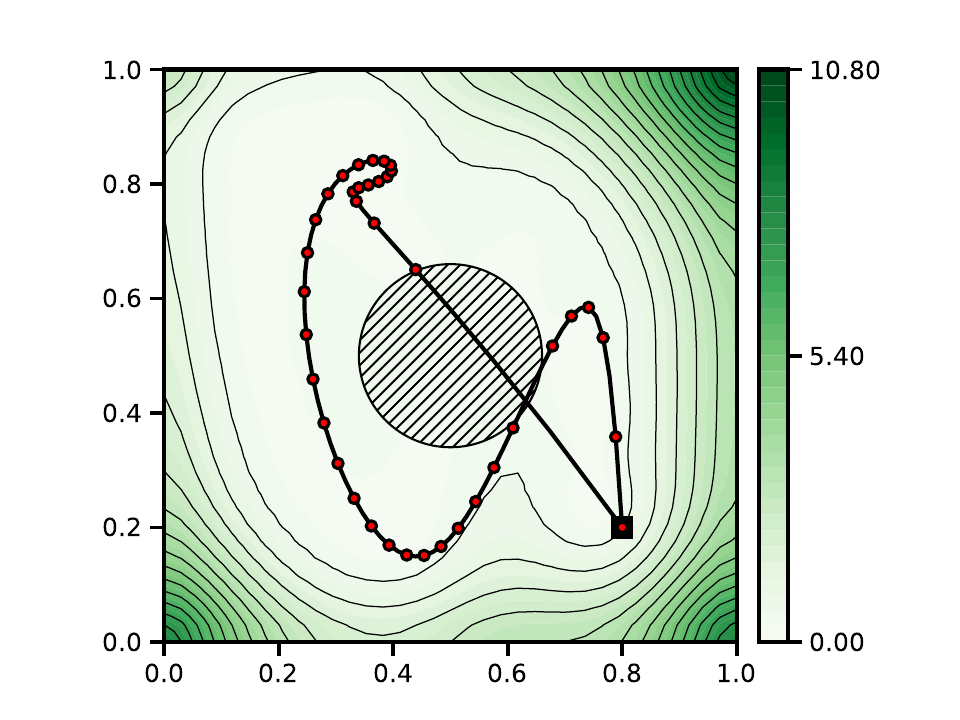}

  \vspace{1ex}

  \includegraphics[width=0.23\textwidth]{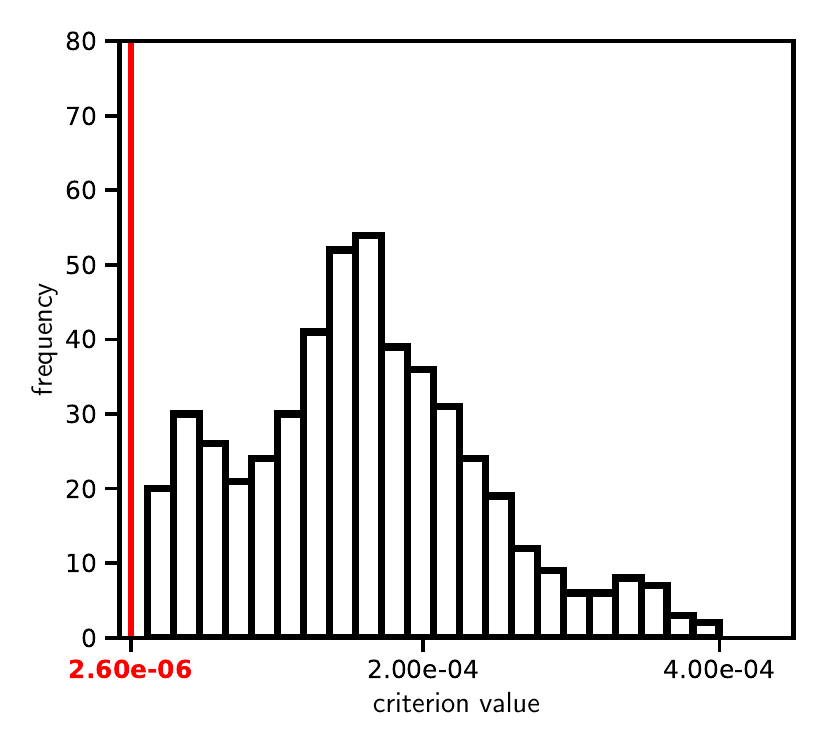}
  \hfill
  \includegraphics[width=0.23\textwidth]{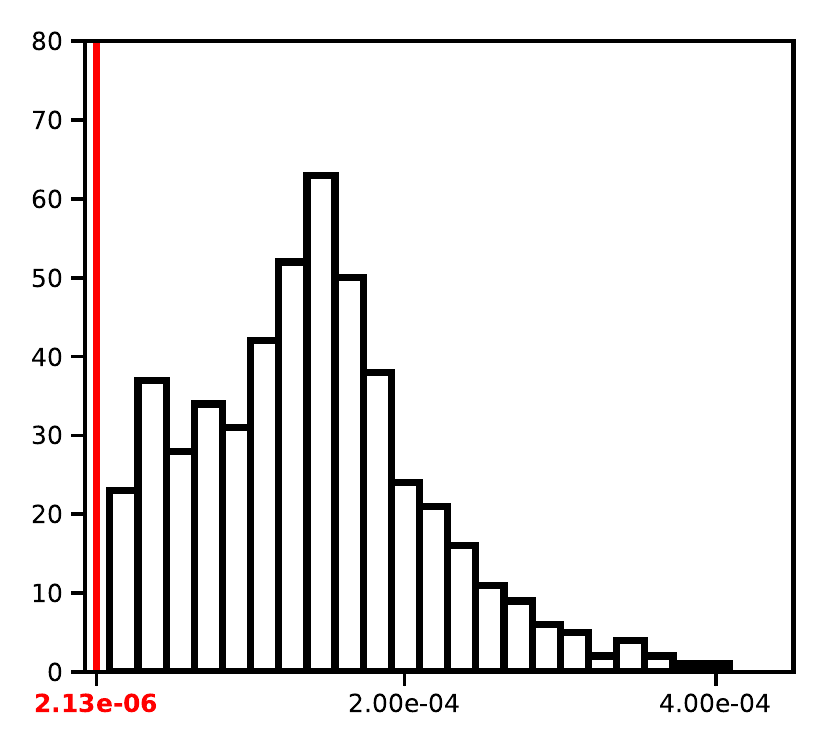}
  \hfill
  \includegraphics[width=0.23\textwidth]{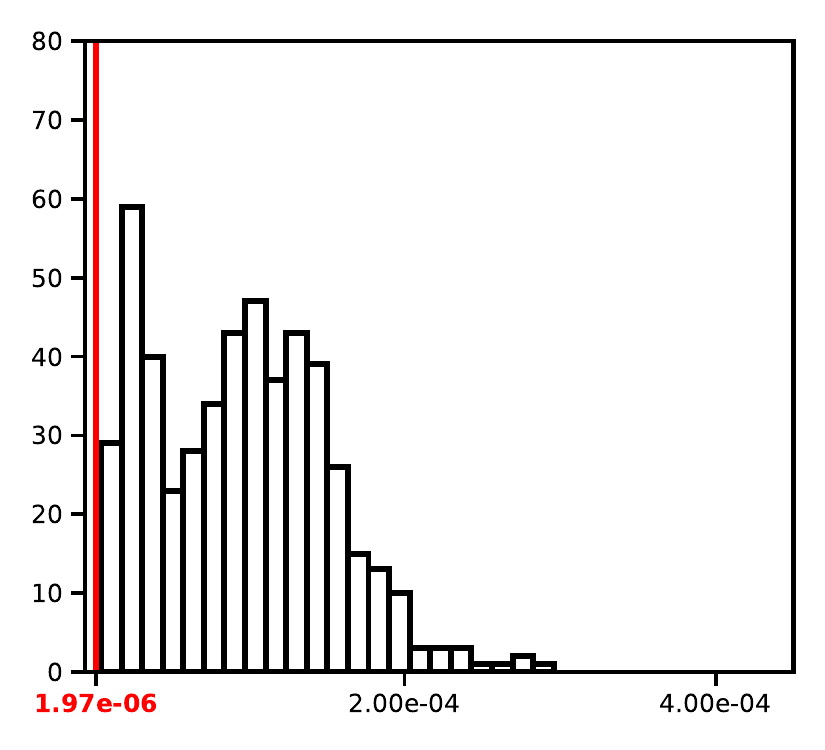}
  \hfill
  \includegraphics[width=0.23\textwidth]{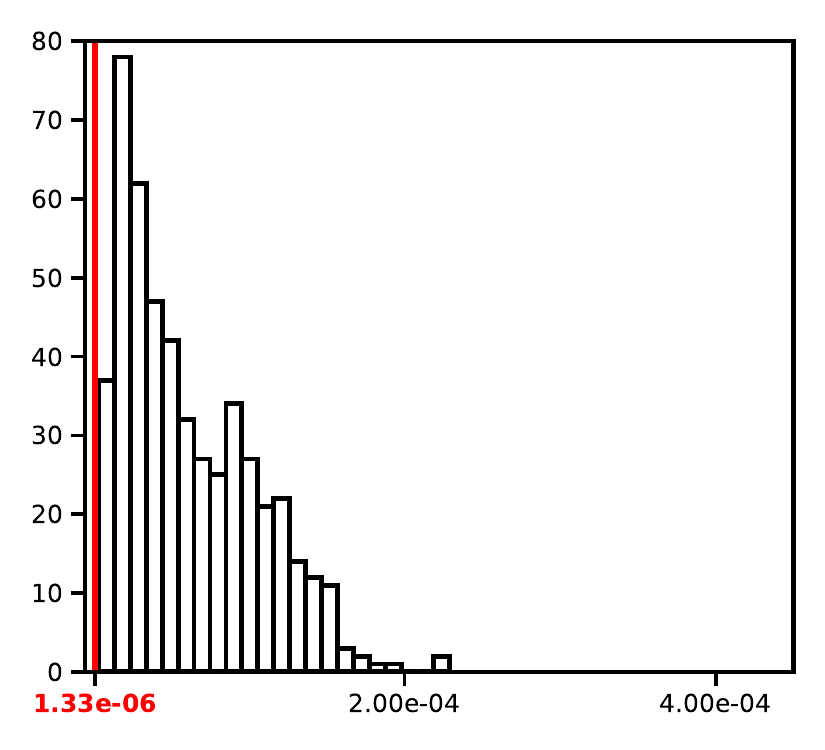}

  \caption{Top: optimal paths for $N_b \in \{4, 5, 8, 20\}$ and corresponding posterior variance fields. The square is the initial/terminal position and the obscured region is the shaded disk. Bottom: Histogram of criterion values for uniformly randomly generated designs. The optimal criterion value is the red vertical line.}
  \label{fig:bezier_loop_histogram}
\end{figure}

\subsection{An experiment with Fourier paths}
\label{sec:fourier_results}
Here, we solve the path-OED problem with Fourier paths. In this case, we require
the sensor's initial and terminal positions to coincide, forming a loop.
However, we do not specify the starting location---this is naturally determined by solving
the path-OED problem.  In what follows, after specifying the setup of the
experiment, we solve the path-OED problem for various Fourier mode numbers $\Nf$ and discuss
the results.

\boldheading{Problem setup}
In the present experiment, the boundary conditions in~\eqref{eq:conv_diff} are
selected such that $E_0 = \emptyset$ and $E_n = \partial \Om$. That is, 
we consider only homogenous Neumann boundary conditions. The global time
interval is $T = [0,2]$ and the inversion interval is $\Ty = [0.25, 1.0]$. The
sensor makes a measurement at every third time instance in the
discretization of $\Ty$, resulting in $50$ measurements. We let the velocity
field and amplitude function in~\eqref{eq:conv_diff} to be
$$
\vec F(\vx) \defeq 
\begin{bmatrix}
2(2x_2 - 1)\big(1 - (2x_1 - 1)^2\big)\\
-2(2x_1 - 1)\big(1 - (2x_2 - 1)^2\big)
\end{bmatrix}
\quad \text{and} \quad a(t) \defeq \frac{1.05}{2} \left( 1 - \frac{2}{\pi} \arctan(8t - 6) \right).
$$
This defines a recirculating velocity field.
As the concentration diffuses, it swirls about the point $(0.5, 0.5)$ while
being damped by $a(t)$. We depict both the velocity field and amplitude function
in Figure~\ref{fig:fourier_vel_amp}.

\begin{figure}[ht]
\centering
\includegraphics[height=0.25\textwidth]{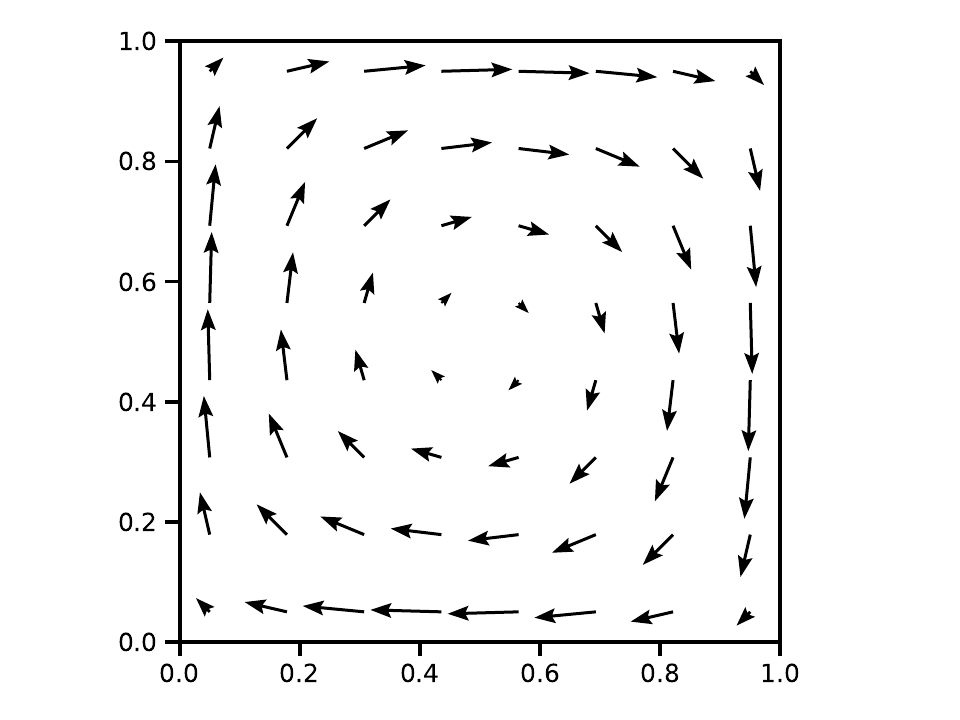}
\includegraphics[height=0.25\textwidth]{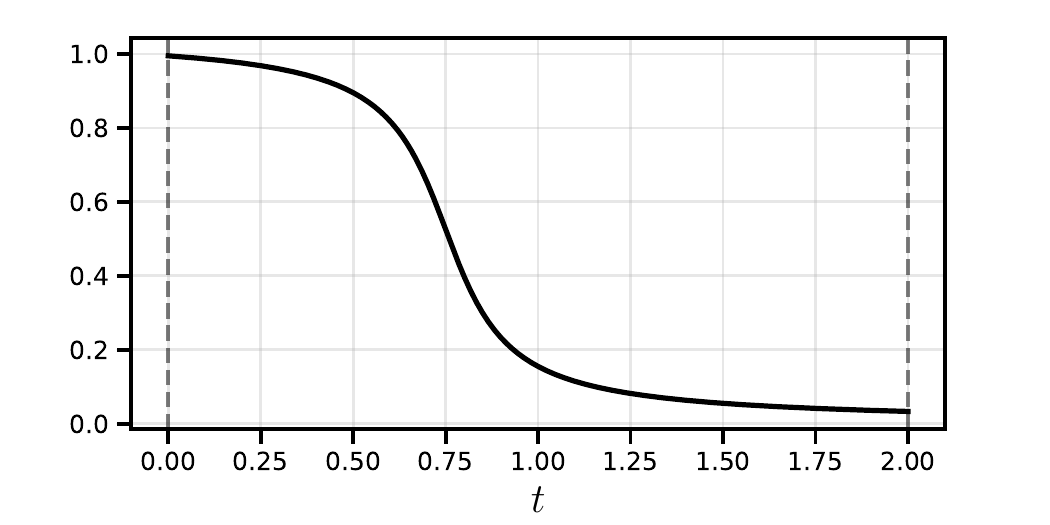}
\caption{The velocity field $\vec F(\vx)$ (left) and amplitude $a(t)$ (right) used for the Fourier experiment.}
\label{fig:fourier_vel_amp}
\end{figure}

As for the spatiotemporal indicator function in~\eqref{eq:space_time_ind} used to form $\mat \Psi$, we define $v$ with
$$
\Om_{v} = (0.2, 0.8)^2 \quad \text{and} \quad T_{v} = [1.5, 2].
$$ 

\boldheading{Solving the path-OED problem}
We consider Fourier paths with $\Nf \in \{1, 3, 5, 10\}$ number of modes.  For
each $\Nf$, we solve the path-OED problem by minimizing $\mat \Psi$ to yield the
optimal design. The regularization term $R(\xif; \gam)$, presented
in~\eqref{eq:reg_acc}, is appended to $\mat \Psi$ to limit acceleration or sharp
turns in the optimal paths. The choice of regularization parameter $\gam$ is
depends upon the mode number. We select $(\Nf, \gam)$ pairs as follows:
$$
(\Nf, \gam) \in \{(1, 3\times 10^{-7}), \ (3, 5 \times 10^{-9}), \ (5, 7\times 10^{-10}), 
\ (10, 9\times 10^{-11})\}.
$$

\begin{figure}[htbp]
  \centering

  \begin{minipage}[t]{0.23\textwidth}
    \centering $\Nf = 1$
  \end{minipage}
  \hfill
  \begin{minipage}[t]{0.23\textwidth}
    \centering $\Nf = 3$
  \end{minipage}
  \hfill
  \begin{minipage}[t]{0.23\textwidth}
    \centering $\Nf = 5$
  \end{minipage}
  \hfill
  \begin{minipage}[t]{0.23\textwidth}
    \centering $\Nf = 10$
  \end{minipage}

  \vspace{0.5ex}

  \includegraphics[width=0.245\textwidth]{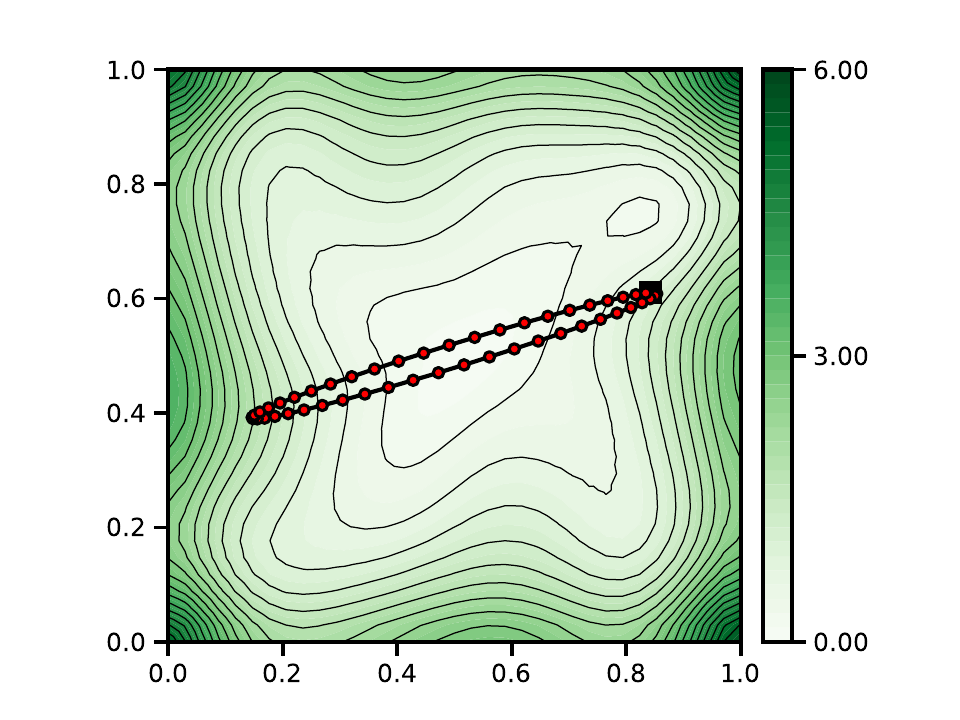}
  \hfill
  \includegraphics[width=0.245\textwidth]{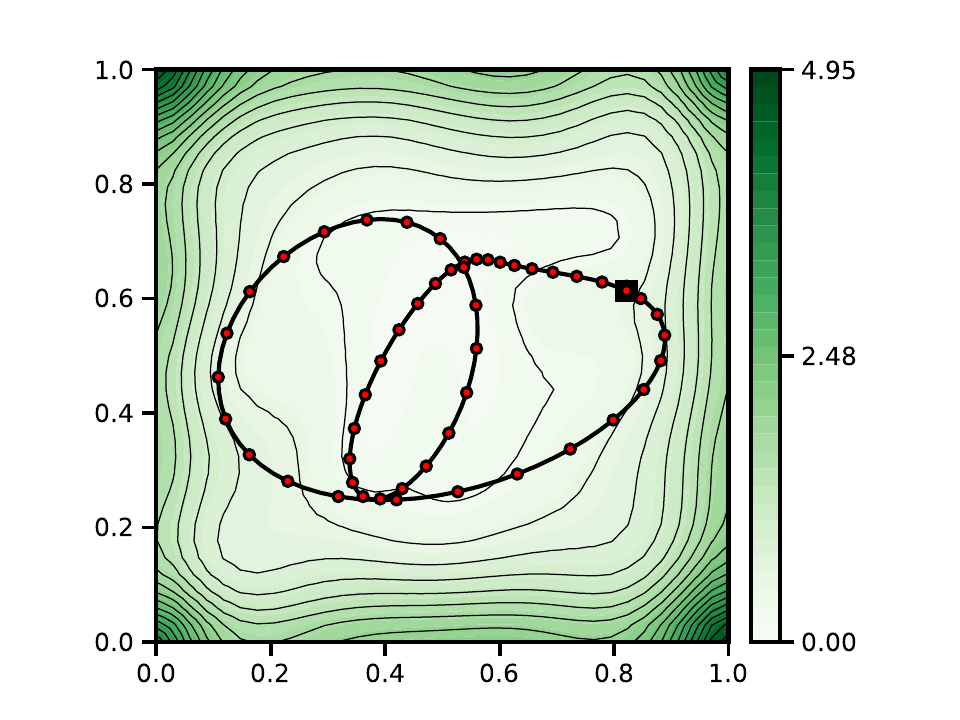}
  \hfill
  \includegraphics[width=0.245\textwidth]{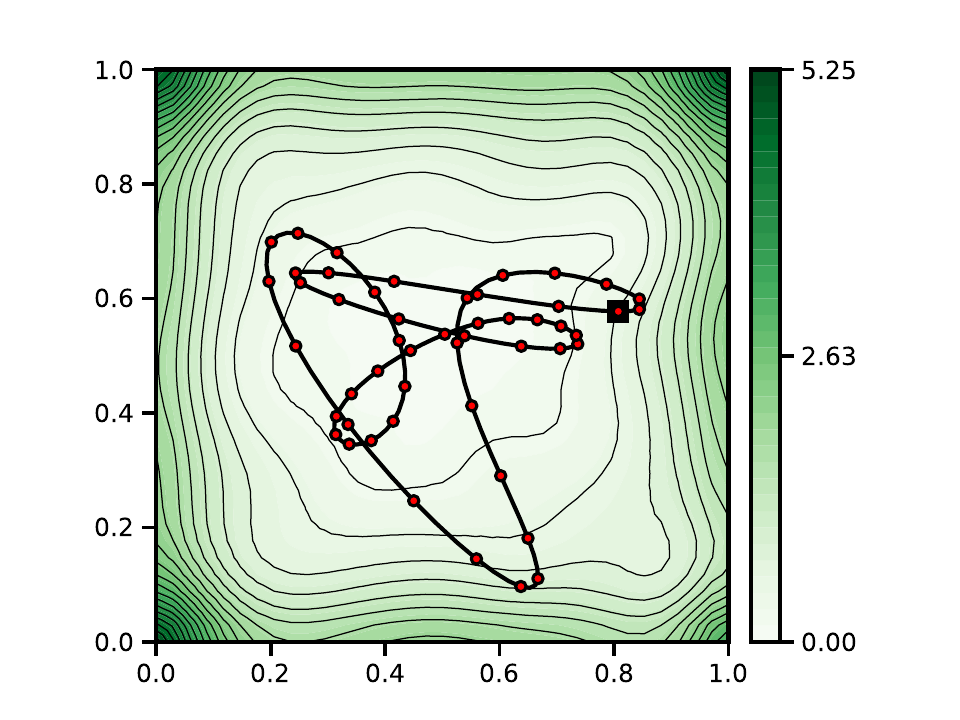}
  \hfill
  \includegraphics[width=0.245\textwidth]{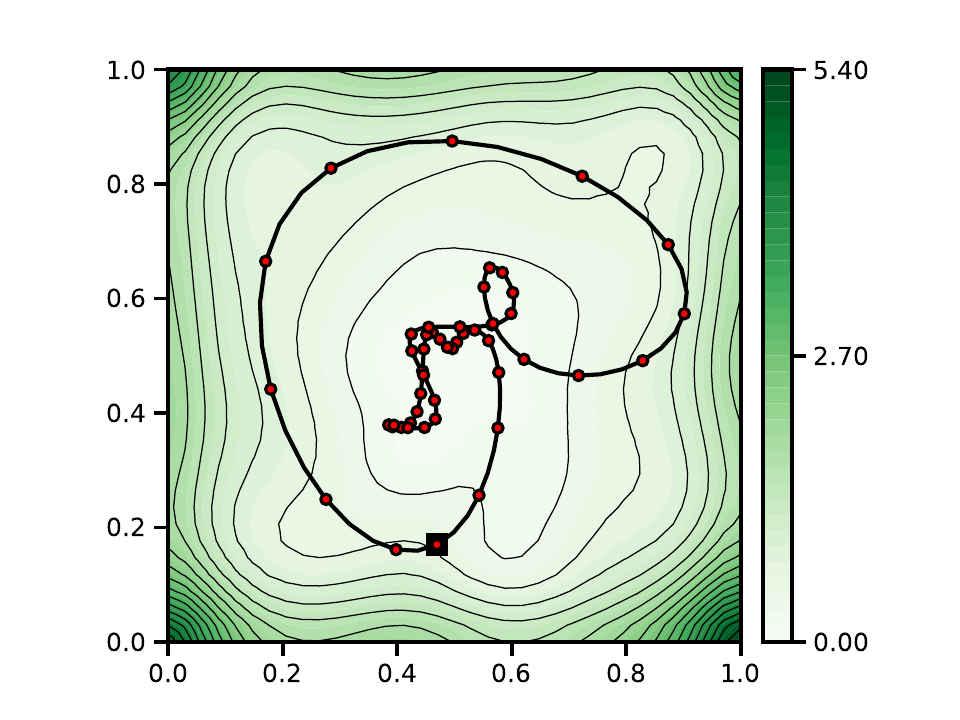}

  \vspace{1ex}

  \includegraphics[width=0.23\textwidth]{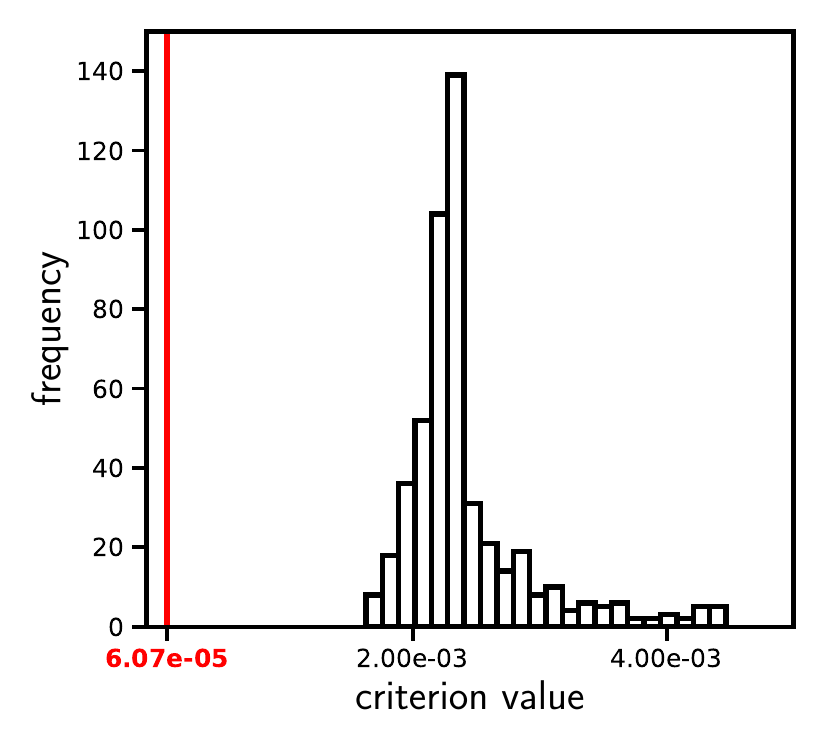}
  \hfill
  \includegraphics[width=0.23\textwidth]{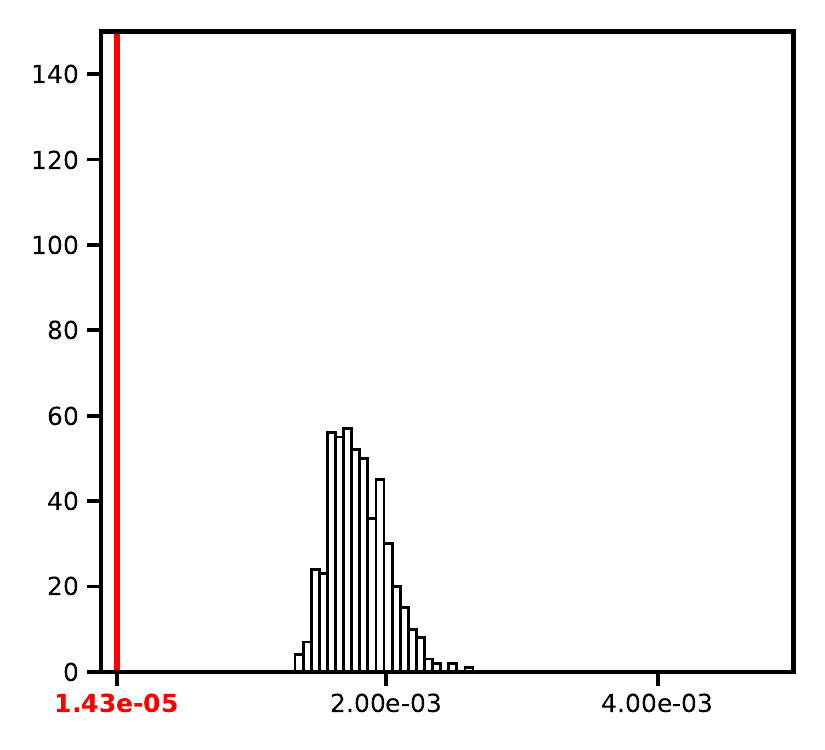}
  \hfill
  \includegraphics[width=0.23\textwidth]{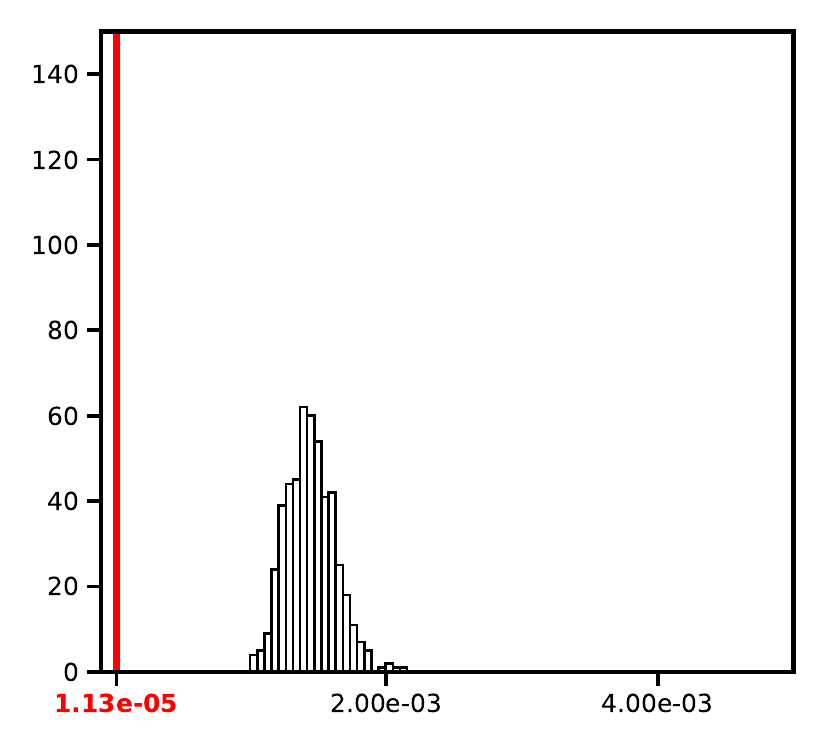}
  \hfill
  \includegraphics[width=0.23\textwidth]{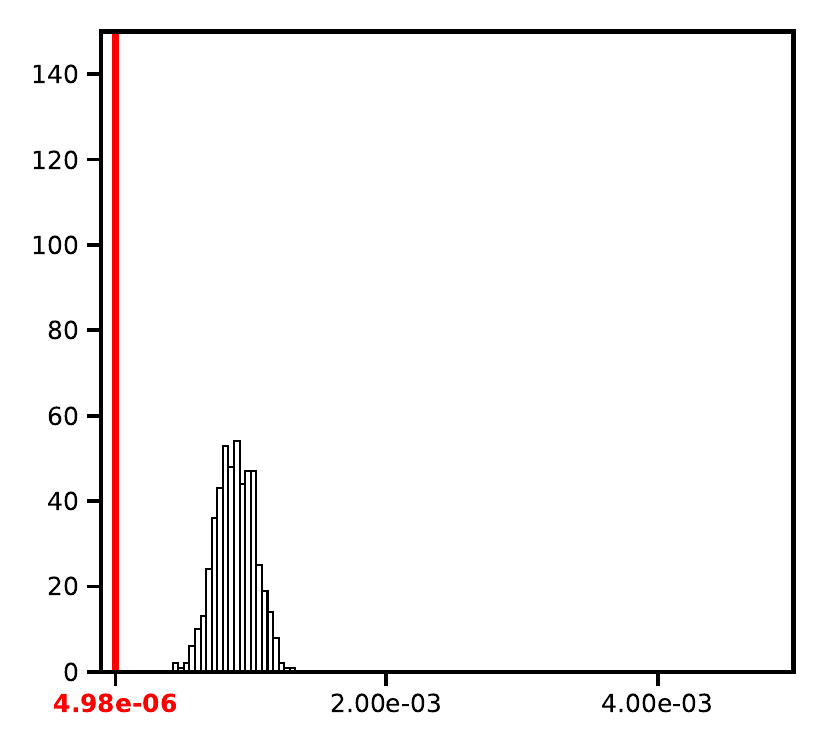}

  \caption{Top: optimal paths for $\Nf \in \{1, 3, 5, 10\}$ and corresponding posterior variance fields. The square is the initial/terminal position of the sensor. Bottom: Histogram of criterion values for uniformly randomly generated designs. The optimal criterion value is the red vertical line.}
  \label{fig:fourier_histogram}
\end{figure}

For each considered $\Nf$, we study the optimal paths, posterior variance
fields, and histograms of criterion values generated by evaluating $\mat
\Psi(\xif)$ for $1000$ uniformly random $\xif$. These results are presented in
Figure~\ref{fig:fourier_histogram}. As seen in the top row of the figure, the
optimal paths become more complex as we increase $\Nf$. 
For the $\Nf = 1$ case, Fourier
paths are ellipses. Hence, the top-left of Figure~\ref{fig:fourier_histogram}
shows the optimal ellipse. As with the \Bezier{} experiments in
Subsection~\ref{sec:bezier_results}, the optimal paths preferentially reduce
uncertainty in regions of $\Om$ that minimize variance in the goal-functional.

Now we examine the criterion value histograms on the bottom row of
Figure~\ref{fig:fourier_histogram}. Optimal paths are consistently orders of
magnitude better than randomly selected paths. This is indicated by the distance
from the red vertical line (optimal criterion value) to the histograms in
Figure~\ref{fig:fourier_histogram}. Increasing the number of the Fourier
modes
substantially reduces the optimal criterion value. This observation is
supported by the factor $10$ improvement on optimality achieved from $\Nf = 5$
to $\Nf = 10$. A significant observation is the large gap between the optimal
criterion values and the values corresponding to randomly generated designs.

\section{Conclusion}
In this work we developed a mathematical and computational framework for optimal
experimental design in infinite-dimensional Bayesian linear inverse problems
with mobile sensors, which we termed \emph{path-OED}. Our approach parameterizes
sensor trajectories with any sufficiently regular family of curves.  We define
the observation operator by computing local averages of the PDE solution about
measurement points on the sensor path.  This results in a bounded linear
observation operator, and enables us to deploy standard optimality criteria in
the path-OED setting.  We focused on the Bayesian c-optimality criterion and derived its
derivatives. This was facilitated by a more general result which is useful
beyond the path-OED setting: a formula for the derivative of a parameterized
covariance-like operator. We also showed how practical constraints can be
incorporated into the formulation. Proceeding from the function-space problem to
a finite-dimensional representation, we introduced a weighted time-space inner
product and derived matrix-free actions of relevant operators and their
adjoints. Numerical experiments showed that path-OED reduces prediction variance
more effectively than random or user-prescribed paths.

The framework discussed in this article has several limitations and exposes
directions for future research. For instance, it would be useful to extend
path-OED to other criteria such as the A- and D-optimality criteria. The
observation operator as formulated in the present work paves the way to
formulating these criteria. However, formulas for their gradients and efficient
computational methods must be derived.
Also, the proposed approach assumes that the inversion model is linear. Subsequently,
a path-OED framework for nonlinear infinite-dimensional Bayesian inverse
problems is an open direction to research. In this case, the solution operator
of the inverse problem is nonlinear. Known methods for addressing this
nonlinearity include linearizing the solution operator, which would result in
locally optimal solutions to the path-OED problem, constructing a Laplace
approximation to the posterior, or relying on sampling based methods.

Other avenues to investigate include developments that make path-OED more practical
to deploy in science and engineering applications. Our formulations are for a single sensor apparatus. Thus, we can
consider adapting path-OED for an arbitrary number of mobile sensors. Our
theoretical framework indicates how to carry out the infinite-dimensional
formulation of this extension. However, considerable work is required to
develop computational methods that enable solving path-OED problems
with multiple sensors. We can also investigate how to incorporate constraints
like battery life or obstacles into the path-OED problem. Other constraints
arise depending on physical considerations related to the sensor apparatus and
application. 

Lastly, we remark on online extensions of path-OED or adaptations that
incorporate sensor dynamics into the problem. Our approach is not online,
meaning that the path-OED problem is solved before any data acquisition has
occurred. In contrast, we can consider an online approach---utilizing ideas from
sequential OED and data-assimilation. Advantages of such a formulation include
the ability to incrementally update our prior with encountered data or the
ability to respond to environmental hazards in real-time. Related to this
concept, we can also adapt our path-OED approach to applications where dynamics
governing the behavior of the sensor must be considered. For example,
trajectories of the sensor can be considered as solutions to a dynamical system 
where the experimental design is a control input.  This
amounts to extending the formulations such as the ones in~\cite{Ucinski052008}
to the present infinite-dimensional Bayesian formulations.  This can be
accomplished by noting that we can parameterize the control instead of the
sensor trajectory directly.

\section*{Acknowledgments}
This article has been authored by employees of National Technology \&
Engineering Solutions of Sandia, LLC under Contract No.~DE-NA0003525 with the
U.S.~Department of Energy (DOE). The employees own all right, title and interest
in and to the article and are solely responsible for its contents.
%
%
SAND2026-16116O.

%
%

This material is also based upon work supported by the U.S. Department of Energy,
Office of Science, Office of Advanced Scientific Computing Research Field Work
Proposal Number 23-02526.  

The work of Ahmed Attia was supported by the U.S. Department of Energy, Office of Science, Office of Advanced Scientific Computing Research, Scientific Discovery through Advanced Computing (SciDAC) Program through the FASTMath Institute under contract number DE-AC02-06CH11357 at Argonne National Laboratory.

\bibliographystyle{plain} 
\bibliography{refs}

@Article{Stuart10,
  Title                    = {Inverse problems: {A B}ayesian perspective},
  Author                   = {Andrew M. Stuart},
  journal                  = {Acta Numerica},
  Year                     = {2010},
  Pages                    = {451-559},
  Volume                   = {19}
}

@book{reed1980methods,
  title={Methods of modern mathematical physics: Functional analysis},
  author={Reed, Michael and Simon, Barry},
  volume={1},
  year={1980},
  publisher={Gulf Professional Publishing}
}

@InCollection{DashtiStuart17,
  Title                    = {The {B}ayesian Approach To Inverse Problems},
  Author                   = {Masoumeh Dashti and Andrew M. Stuart},
  Booktitle                = {Handbook of Uncertainty Quantification},
  Publisher                = {Springer},
  pages={311--428},
  Year                     = {2017},
  Editor                   = {Roger Ghanem and David Higdon and Houman Owhadi},
}

@Article{Bui-ThanhGhattasMartinEtAl13,
  Title                    = {A Computational Framework for Infinite-Dimensional {B}ayesian Inverse Problems. {P}art {I}: {T}he Linearized Case, with Application to Global Seismic Inversion},
  Author                   = {Bui-Thanh, Tan and Ghattas, Omar and Martin, James and Stadler, Georg},
  journal                  = {SIAM Journal on Scientific Computing},
  Year                     = {2013},
  Number                   = {6},
  Pages                    = {A2494-A2523},
  Volume                   = {35},
}

@Article{Chaloner95,
  Title                    = {Bayesian experimental design: A review},
  Author                   = {Chaloner, Kathryn and Verdinelli, Isabella},
  journal                  = {Statistical Science},
  Year                     = {1995},
  Number                   = {3},
  Pages                    = {273--304},
  Volume                   = {10},
}

@article{Alexanderian21,
  title={Optimal experimental design for infinite-dimensional Bayesian inverse problems governed by PDEs: A review},
  author={Alexanderian, Alen},
  journal={Inverse Problems},
  volume={37},
  number={4},
  pages={043001},
  year={2021},
  publisher={IOP Publishing}
}

@article{AlexanderianGhattasEtAl16,
	author = {Alen Alexanderian and Philip J. Gloor and Omar Ghattas},
	title = {{On Bayesian A- and D-Optimal Experimental Designs in Infinite Dimensions}},
	volume = {11},
	journal = {Bayesian Analysis},
	number = {3},
	publisher = {International Society for Bayesian Analysis},
	year = {2016}
}

@Book{Ucinski05,
  Title                    = {Optimal measurement methods for distributed parameter system identification},
  Author                   = {Uci{\'n}ski, Dariusz},
  Publisher                = {CRC Press},
  Year                     = {2005},
  Address                  = {Boca Raton},
}

@inproceedings{Ucinski052008,
  title={D-optimal trajectory design of heterogeneous mobile sensors for parameter estimation of distributed systems},
  author={Tricaud, Christophe and Patan, Maciej and Uci{\'n}ski, Dariusz and Chen, Yang Quan},
  booktitle={2008 American Control Conference},
  pages={663--668},
  year={2008},
  organization={IEEE}
}

@Book{AtkinsonDonev92,
  Title                    = {Optimum Experimental Designs},
  Author                   = {Atkinson, Anthony C. and Donev, Alexander N.},
  Publisher                = {Oxford},
  Year                     = {1992},
  Owner                    = {ccgo},
  Timestamp                = {2013.03.08}
}

@Book{Pukelsheim93,
  Title                    = {Optimal Design of Experiments},
  Author                   = {Pukelsheim, Friedrich},
  Publisher                = {John Wiley \& Sons},
  Year                     = {1993},
  Address                  = {New-York},
}

@article{ButlerJakemanWildey20,
  title={Optimal experimental design for prediction based on push-forward probability measures},
  author={Butler, T and Jakeman, John D and Wildey, Tim},
  journal={Journal of Computational Physics},
  volume={416},
  pages={109518},
  year={2020},
  publisher={Elsevier}
}

@article{farouki12,
title = {The Bernstein polynomial basis: A centennial retrospective},
journal = {Computer Aided Geometric Design},
volume = {29},
number = {6},
pages = {379-419},
year = {2012},
issn = {0167-8396},
doi = {https://doi.org/10.1016/j.cagd.2012.03.001},
url = {https://www.sciencedirect.com/science/article/pii/S0167839612000192},
author = {Rida T. Farouki}
}

@article{forrest90,
title = {Interactive interpolation and approximation by Bézier polynomials},
journal = {Computer-Aided Design},
volume = {22},
number = {9},
pages = {527-537},
year = {1990},
issn = {0010-4485},
doi = {https://doi.org/10.1016/0010-4485(90)90038-E},
url = {https://www.sciencedirect.com/science/article/pii/001044859090038E},
author = {A.R. Forrest}
}

@article{kmet19,
title = {Bézier curve parametrisation and echo state network methods for solving optimal control problems of SIR model},
journal = {Biosystems},
volume = {186},
pages = {104029},
year = {2019},
note = {Selected papers from the International Conference on the Theory and Practice of Natural Computing 2017},
issn = {0303-2647},
doi = {https://doi.org/10.1016/j.biosystems.2019.104029},
url = {https://www.sciencedirect.com/science/article/pii/S0303264718301096},
author = {Tibor Kmet and Maria Kmetova}
}

@article{Buhmann2000,
  author  = {M. D. Buhmann},
  title   = {Radial basis functions},
  journal = {Acta Numerica},
  volume  = {9},
  pages   = {1--38},
  year    = {2000},
  doi     = {10.1017/S0962492900000015}
}

@book{brezis2011functional,
  title={Functional analysis, Sobolev spaces and partial differential equations},
  author={Haim Br{\'e}zis},
  volume={2},
  number={3},
  year={2011},
  publisher={Springer}
}

@Article{Mei24,
AUTHOR = {Mei, Jiazhong and Brunton, Steven L. and Kutz, J. Nathan},
TITLE = {Mobile Sensor Path Planning for Kalman Filter Spatiotemporal Estimation},
JOURNAL = {Sensors},
VOLUME = {24},
YEAR = {2024},
NUMBER = {12},
ARTICLE-NUMBER = {3727},
URL = {https://www.mdpi.com/1424-8220/24/12/3727},
PubMedID = {38931510},
ISSN = {1424-8220},
DOI = {10.3390/s24123727}
}

@article{raju22,
  title   = {Path Planning Based on {B{\'e}zier} Curve for Autonomous Ground Vehicles},
  author  = {Raju, B. Naga},
  journal = {Journal of Science \& Technology},
  volume  = {7},
  number  = {10},
  pages   = {112--115},
  month   = {Dec},
  year    = {2022},
  doi     = {10.46243/jst.2022.v7.i10.pp112-115},
  url     = {https://jst.org.in/index.php/pub/article/view/909}
}

@article{gasparetto2015path,
  title={Path planning and trajectory planning algorithms: A general overview},
  author={Gasparetto, Alessandro and Boscariol, Paolo and Lanzutti, Albano and Vidoni, Renato},
  journal={Motion and operation planning of robotic systems: Background and practical approaches},
  pages={3--27},
  year={2015},
  publisher={Springer}
}

@article{dunbabin2012robots,
  title={Robots for environmental monitoring: Significant advancements and applications},
  author={Dunbabin, Matthew and Marques, Lino},
  journal={IEEE Robotics \& Automation Magazine},
  volume={19},
  number={1},
  pages={24--39},
  year={2012},
  publisher={IEEE}
}

@article{lavalle2001rapidly,
  title={Rapidly-exploring random trees: Progress and prospects: Steven m. lavalle, iowa state university, a james j. kuffner, jr., university of tokyo, tokyo, japan},
  author={LaValle, Steven M and Kuffner, James J},
  journal={Algorithmic and computational robotics},
  pages={303--307},
  year={2001},
  publisher={AK Peters/CRC Press}
}

@Article{karthik2021,
AUTHOR = {Karur, Karthik and Sharma, Nitin and Dharmatti, Chinmay and Siegel, Joshua E.},
TITLE = {A Survey of Path Planning Algorithms for Mobile Robots},
JOURNAL = {Vehicles},
VOLUME = {3},
YEAR = {2021},
NUMBER = {3},
PAGES = {448--468},
URL = {https://www.mdpi.com/2624-8921/3/3/27},
ISSN = {2624-8921},
DOI = {10.3390/vehicles3030027}
}

@book{tricaud2011optimal,
  title={Optimal mobile sensing and actuation policies in cyber-physical systems},
  author={Tricaud, Christophe and Chen, YangQuan},
  year={2011},
  publisher={Springer Science \& Business Media}
}

@article{raf1986,
    AUTHOR = {Rafaj{\l}owicz, Ewaryst},
     TITLE = {Optimum choice of moving sensor trajectories for
              distributed-parameter system identification},
  JOURNAL = {International Journal of Control},
    VOLUME = {43},
      YEAR = {1986},
    NUMBER = {5},
     PAGES = {1441--1451},
      ISSN = {0020-7179,1366-5820},
       DOI = {10.1080/00207178608933550},
}

@book{choset2005principles,
  title={Principles of robot motion: theory, algorithms, and implementations},
  author={Choset, Howie and Lynch, Kevin M and Hutchinson, Seth and Kantor, George A and Burgard, Wolfram},
  year={2005},
  publisher={MIT press}
}

@book{latombe2012robot,
  title={Robot motion planning},
  author={Latombe, Jean-Claude},
  volume={124},
  year={2012},
  publisher={Springer Science \& Business Media}
}

@book{lavalle2006planning,
  title={Planning algorithms},
  author={LaValle, Steven M},
  year={2006},
  publisher={Cambridge University Press}
}

@article{hart1968formal,
  title={A formal basis for the heuristic determination of minimum cost paths},
  author={Hart, Peter E and Nilsson, Nils J and Raphael, Bertram},
  journal={IEEE Transactions on Systems Science and Cybernetics},
  volume={4},
  number={2},
  pages={100--107},
  year={1968},
  publisher={IEEE}
}

@book{horn2012matrix,
  title={Matrix analysis},
  author={Horn, Roger A and Johnson, Charles R},
  year={2012},
  publisher={Cambridge University Press}
}

@article{LeeKim21,
  title={Optimal output trajectory shaping using B{\'e}zier curves},
  author={Lee, Suwon and Kim, Youdan},
  journal={Journal of Guidance, Control, and Dynamics},
  volume={44},
  number={5},
  pages={1027--1035},
  year={2021},
  publisher={American Institute of Aeronautics and Astronautics}
}

@article{neuberger2025goal,
  title={Goal oriented optimal design of infinite-dimensional Bayesian inverse problems using quadratic approximations},
  author={Neuberger, J Nicholas and Alexanderian, Alen and van Bloemen Waanders, Bart},
  journal={Journal of Scientific Computing},
  volume={105},
  number={2},
  pages={55},
  year={2025},
  publisher={Springer}
}

@article{HuanJagalurMarzouk24,
  title={Optimal experimental design: Formulations and computations},
  author={Huan, Xun and Jagalur, Jayanth and Marzouk, Youssef},
  journal={Acta Numerica},
  volume={33},
  pages={715--840},
  year={2024},
  publisher={Cambridge University Press}
}

@book{roka2018advanced,
  title={Advanced Path Planning for Mobile Entities},
  author={R{\'o}ka, Rastislav},
  year={2018},
  publisher={BoD--Books on Demand}
}

@article{klanvcar2019drivable,
  title={Drivable path planning using hybrid search algorithm based on E* and Bernstein--B{\'e}zier motion primitives},
  author={Klan{\v{c}}ar, Gregor and Seder, Marija and Bla{\v{z}}i{\v{c}}, Sa{\v{s}}o and {\v{S}}krjanc, Igor and Petrovi{\'c}, Ivan},
  journal={IEEE Transactions on Systems, Man, and Cybernetics: Systems},
  volume={51},
  number={8},
  pages={4868--4882},
  year={2019},
  publisher={IEEE}
}

@article{su2018time,
  title={Time-optimal trajectory planning for delta robot based on quintic pythagorean-hodograph curves},
  author={Su, Tingting and Cheng, Long and Wang, Yunkuan and Liang, Xu and Zheng, Jun and Zhang, Haojian},
  journal={IEEE Access},
  volume={6},
  pages={28530--28539},
  year={2018},
  publisher={IEEE}
}

@article{singh2023review,
  title={A review of deep reinforcement learning algorithms for mobile robot path planning},
  author={Singh, Ramanjeet and Ren, Jing and Lin, Xianke},
  journal={Vehicles},
  volume={5},
  number={4},
  pages={1423--1451},
  year={2023},
  publisher={MDPI}
}

@article{tullu2021machine,
  title={Machine learning approach to real-time 3D path planning for autonomous navigation of unmanned aerial vehicle},
  author={Tullu, Abera and Endale, Bedada and Wondosen, Assefinew and Hwang, Ho-Yon},
  journal={Applied Sciences},
  volume={11},
  number={10},
  pages={4706},
  year={2021},
  publisher={MDPI}
}

@article{boehm1999casteljau,
  title={On de {C}asteljau's algorithm},
  author={Boehm, Wolfgang and M{\"u}ller, Andreas},
  journal={Computer Aided Geometric Design},
  volume={16},
  number={7},
  pages={587--605},
  year={1999},
  publisher={Elsevier}
}

@article{Hermes2017,
  doi = {10.21105/joss.00267},
  url = {https://doi.org/10.21105%2Fjoss.00267},
  year = {2017},
  month = {Aug},
  publisher = {The Open Journal},
  volume = {2},
  number = {16},
  pages = {267},
  author = {Danny Hermes},
  title = {Helper for B{\'{e}}zier Curves, Triangles, and Higher Order Objects},
  journal = {The Journal of Open Source Software}
}

@article{byrd1995limited,
  title={A limited memory algorithm for bound constrained optimization},
  author={Byrd, Richard H and Lu, Peihuang and Nocedal, Jorge and Zhu, Ciyou},
  journal={SIAM Journal on scientific computing},
  volume={16},
  number={5},
  pages={1190--1208},
  year={1995},
  publisher={SIAM}
}

@ARTICLE{2020SciPy-NMeth,
  author  = {Virtanen, Pauli and Gommers, Ralf and Oliphant, Travis E. and
            Haberland, Matt and Reddy, Tyler and Cournapeau, David and
            Burovski, Evgeni and Peterson, Pearu and Weckesser, Warren and
            Bright, Jonathan and {van der Walt}, St{\'e}fan J. and
            Brett, Matthew and Wilson, Joshua and Millman, K. Jarrod and
            Mayorov, Nikolay and Nelson, Andrew R. J. and Jones, Eric and
            Kern, Robert and Larson, Eric and Carey, C J and
            Polat, {\.I}lhan and Feng, Yu and Moore, Eric W. and
            {VanderPlas}, Jake and Laxalde, Denis and Perktold, Josef and
            Cimrman, Robert and Henriksen, Ian and Quintero, E. A. and
            Harris, Charles R. and Archibald, Anne M. and
            Ribeiro, Ant{\^o}nio H. and Pedregosa, Fabian and
            {van Mulbregt}, Paul and {SciPy 1.0 Contributors}},
  title   = {{{SciPy} 1.0: Fundamental Algorithms for Scientific
            Computing in Python}},
  journal = {Nature Methods},
  year    = {2020},
  volume  = {17},
  pages   = {261--272},
  adsurl  = {https://rdcu.be/b08Wh},
  doi     = {10.1038/s41592-019-0686-2},
}

@article{mckay2000comparison,
  title={A comparison of three methods for selecting values of input variables in the analysis of output from a computer code},
  author={McKay, Michael D and Beckman, Richard J and Conover, William J},
  journal={Technometrics},
  volume={42},
  number={1},
  pages={55--61},
  year={2000},
  publisher={Taylor \& Francis}
}

@article{fenics2015,
  title={The FEniCS project version 1.5},
  author={Aln{\ae}s, Martin and Blechta, Jan and Hake, Johan and Johansson, August and Kehlet, Benjamin and Logg, Anders and Richardson, Chris and Ring, Johannes and Rognes, Marie E and Wells, Garth N},
  journal={Archive of numerical software},
  volume={3},
  number={100},
  year={2015}
}

@article{attia2018goal,
  title={Goal-oriented optimal design of experiments for large-scale Bayesian linear inverse problems},
  author={Attia, Ahmed and Alexanderian, Alen and Saibaba, Arvind K},
  journal={Inverse Problems},
  volume={34},
  number={9},
  pages={095009},
  year={2018},
  publisher={IOP Publishing}
}

\begin{appendices}

\section{Proof of Theorem~\ref{thm:obs}}
\label{appdx:path}

The mollifiers defined in~\eqref{eq:mollifiers} are functions of time, space, and the path parameter $\vxi$. We tend to suppress these dependencies for convenience, simply writing $\tau_{k}$ and $\dr$.

\begin{proof}

In the case that $u$ is continuous, the integral $\llangle \tau_{k} \dr, u \rrangle$ approximates $u(\vr_{k}, \tau_{k})$. Of course, this interpretation is lost when $u$ is only $L^{2}$, as $u$ may not be pointwise defined. To show Item~1 of Theorem~\ref{thm:obs}, we refer the reader to Theorem 4.22 in~\cite{brezis2011functional}. This result states that we can approximate members of $L^{2}$ arbitrarily close (in the $L^{2}$ sense) via a sequence of convolutions with mollifier functions. When $u$ is continuous, this becomes pointwise convergence.

Now, fix $\vxi \in \R^{N}$ and consider the operator $\mc{B}(\vxi)$. Linearity of $\mc{B}$ is a direct consequence of the $\llangle \cdot, \cdot \rrangle$ inner-product, defined in~\eqref{eq:Lot_inner_prod}. As for boundedness, the Cauchy-Schwarz inequality provides that
$$
\|\mc{B}u\|_{2}^{2} = \sum_{k=1}^{\ny} \llangle \tau_k\dr, u \rrangle^2 \leq \|u\|_{\U}^{2} \sum_{k=1}^{\ny} \|\tau_{k}\dr\|_{\U}^{2}.
$$
The mollifiers are strictly positive and bounded above. Furthermore, $\Om \times T \subset \R^{2} \times \R$ is a bounded set. Hence, $\|\tau_{k}\dr\|_{\U}^{2} < \infty$, for any $k \in \{1, 2, \dots, \ny\}$, implying that $\mc{B}$ is a bounded in $\U$.

Now we show that $\mc{B}(\cdot)$ is continuous in the path variable. For the following argument, note that $\|\tau_{k}\|_{L^{\infty}(T)}$ is finite and constant in $k$. Subsequently, define $\tau_{\infty} \defeq \|\tau_{k}\|_{L^{\infty}(T)}$ and fix $\vxi \in \R^N$, Then, for a sequence $\{\vxi_n\}_{n=1}^{\infty}$ such that $\underset{n \to \infty}{\text{lim}} \vxi_{n} = \vxi$ and any $u \in \U$, the Cauchy-Schwarz inequality provides that
\begin{align*}
\|\big(\mc{B}(\vxi_{n}) - \mc{B}(\vxi)\big)u\|_2^2 &= \sum_{k=1}^{\ny} \bllangle \tau_{k} \big(\dr(\vxi_{n}) - \dr(\vxi)\big), u \brrangle^{2}\\
&\leq \|u\|_{\U}^{2}\sum_{k=1}^{\ny} \|\tau_{k}\big(\dr(\vxi_{n}) - \dr(\vxi)\big)\|_{\U}^{2}\\
&\leq \ny \tau_{\infty}^{2} \|u\|_{\U}^{2} \|\dr(\vxi_{n}) - \dr(\vxi)\|_{\U}^{2}.
\end{align*}
In the case that $u \neq 0$,
\begin{equation}
\label{eq:obs_cont_bound}
\frac{\|\big(\mc{B}(\vxi_{n}) - \mc{B}(\vxi\big)u\|_{2}}{\|u\|_{\U}} \leq \ny^{1/2}\tau_{\infty} \|\dr(\vxi_{n}) - \dr(\vxi)\|_{\U}.
\end{equation}
We rewrite this by taking the supremum over the left side, obtaining the operator norm of $\mc{B}(\vxi_{n}) - \mc{B}(\vxi)$. Specifically,
\begin{equation}
\label{eq:obs_cont}
\opnorm{\mc{B}(\vxi_{n}) - \mc{B}(\vxi)} \leq \ny^{1/2}\tau_{\infty} \|\dr(\vxi_{n}) - \dr(\vxi)\|_{\U}.
\end{equation}

To conclude the argument we show that the right side of~\eqref{eq:obs_cont} vanishes. The mollifier functions $\dr(\vx, t; \vxi)$ are bounded above for any $\vxi$. Thus, there must be some $K > 0$ such that
$$
|\dr(\vxi_{n}) - \dr(\vxi)| < K, \quad \text{for all} n \in \N.
$$
For any $(\vx, t) \in \Om \times T$, the term $\dr(\vx, t; \vxi)$ is a composition of continuous mappings in $\vxi$. Thus,
$$
\underset{n \to \infty}{\text{lim}} |\dr(\vxi_{n}) - \dr(\vxi)|^{2} = 0.
$$
Subsequently, the Dominated Convergence Theorem provides that
$$
\underset{n \to \infty}{\text{lim}} \|\dr(\vxi_{n}) - \dr(\vxi)\|_{\U}^{2} = \int_{T}\int_{\Om} \underset{n \to \infty}{\text{lim}} |\dr(\vxi_{n}) - \dr(\vxi)|^{2} \, d\vx dt = 0.
$$
It follows that the right side of~\eqref{eq:obs_cont} vanishes and $\mc{B}(\cdot)$ must be continuous since
$$
\underset{n \to \infty}{\text{lim}} \opnorm{\mc{B}(\vxi_{n}) - \mc{B}(\vxi)} = 0, \quad \text{whenever} \quad \underset{n \to \infty}{\text{lim}} \vxi_{n} = \vxi.
$$

Lastly, we derive the adjoint of the observation operator. We note that for any $u \in \U$ and $\vec y \in \R^{\ny}$, the adjoint operator $\mc{B}^{*}$ satisfies $\langle \mc{B} u, \vec y \rangle_2 = \llangle u, \mc{B}^*\vec y \rrangle$. Performing a straightforward calculation, we find that
$$
\langle \mc{B} u, \vec y \rangle_{2} = \sum_{k=1}^{\ny}(\mc{B}u)_{k}y_{k} =\sum_{k=1}^{\ny}\llangle u, \tau_{k} \dr \rrangle y_{k} = \bllangle u, \dr \sum_{k=1}^{\ny}\tau_{k}y_{k}\brrangle.
$$
The resulting expression reveals the action of $\mc{B}^*$ on $\vec y$.

\end{proof}

\section{Proof of Theorem~\ref{thm:Cpo_deriv}}
\label{appdx:Cpo_deriv}

\begin{proof}

We obtain the desired result by proving that
\begin{equation}
\label{eq:Cpo_diff_limit_result}
\underset{h \to 0}{\text{lim}} \frac{1}{h} \left( \mc{C}(\xi + h) - \mc{C}(\xi)\right)= - \underset{h \to 0}{\text{lim}} \mc{C}(\xi) \bdot{\mc{H}}_{0}(\xi) \mc{C}(\xi).
\end{equation}
Our argument uses that $\mc{C}:\R \to \Lsym^{++}(\M)$ is continuous. Hence, we prove this fact first. Recall that $\mc{H} = \mc{H}_{0} + \mc{C}_{0}^{-1}$ and $\mc{I} = \mc{C}\mc{H}$. Thus, $\mc{H}(\xi + h) - \mc{H}(\xi) = \mc{H}_{0}(\xi + h) - \mc{H}_{0}(\xi)$ and, by submultiplicativity of the operator norm,
\begin{equation}
\label{eq:Cpo_conv_ineq}
\begin{alignedat}{1}
\opnorm{\mc{C}(\xi + h) - \mc{C}(\xi)} &= \opnorm{\mc{C}(\xi + h)(\mc{H}(\xi) - \mc{H}(\xi + h))\mc{C}(\xi)}\\
&\leq  \opnorm{\mc{C}(\xi)} \, \opnorm{\mc{C}(\xi + h)} \, \opnorm{\mc{H}_{0}(\xi) - \mc{H}_{0}(\xi + h)}.
\end{alignedat}
\end{equation}
Since $\mc{H}_{0}$ is Frech\'{e}t differentiable, it is continuous and we have that $\underset{h \to 0}{\text{lim}} \frac{1}{h} \opnorm{\mc{H}_{0}(\xi) - \mc{H}_{0}(\xi + h)} = 0$. Thus, the right side of~\eqref{eq:Cpo_conv_ineq} vanishes whenever $\mc{C}(\xi + h)$ is bounded. We prove this fact next.

Recall that $\mc{C}_{0} \in \Lsym^{++}(\M)$ and is not necessarily surjective. Subsequently, $\mc{C}_{0}^{1/2}$ exists and is a member of $\Lsym^{++}(\M)$. The inverse of this operator, denoted by $\mc{C}_{0}^{-1/2}$, is possibly unbounded, self-adjoint, strictly positive, and defined on the range of $\mc{C}_{0}^{1/2}$. Hence, we form the operator $\tilde{\mc{H}}_{0} = \mc{C}_{0}^{1/2}\mc{H}_{0}\mc{C}_{0}^{1/2}$ and perform the factorization
\begin{equation}
\label{eq:factored_H}
\mc{C}(\xi + h) = \mc{C}_{0}^{1/2}(\tilde{\mc{H}}_{0}(\xi + h) + \mc{I})^{-1}\mc{C}_{0}^{1/2}.
\end{equation}
Now, using the self-adjointedness of $\mc{C}_{0}^{1/2}$, we have that
\begin{equation}
\label{eq:CH_ineq}
\opnorm{\mc{C}(\xi + h)} = \opnorm{\mc{C}_{0}^{1/2}(\tilde{\mc{H}}_{0}(\xi + h) + \mc{I})^{-1}\mc{C}_{0}^{1/2}} \leq \opnorm{\mc{C}_{0}} \, \opnorm{(\tilde{\mc{H}}_{0}(\xi + h) + \mc{I})^{-1}}.
\end{equation}
Since $\mc{C}_{0}^{1/2} \in \Lsym^{++}(\M)$ and $\mc{H}_{0} (\xi + h) \in \Lsym^{+}(\M)$, we have that $\tilde{\mc{H}}_{0}(\xi + h) \in \Lsym^{+}(\M)$, for any $\xi,h \in \R$. Thus, the spectrum of $\tilde{\mc{H}}_{0}(\xi + h)$ is a subset of $\left[0, \opnorm{\tilde{\mc{H}}_{0}(\xi + h)}\right]$. Subsequently, shifting $\tilde{\mc{H}}_{0}(\xi + h)$ by $\mc{I}$ then inverting yields
$$
\opnorm{(\tilde{\mc{H}}_{0}(\xi + h) + \mc{I})^{-1}} \leq 1.
$$
Substituting this bound into~\eqref{eq:CH_ineq}, we have that $\opnorm{\mc{C}(\xi + h)} \leq \opnorm{\mc{C}_{0}}$ and~\eqref{eq:Cpo_conv_ineq} implies
$$
\opnorm{\mc{C}(\xi + h) - \mc{C}(\xi)}
\leq  \opnorm{\mc{C}(\xi)} \, \opnorm{\mc{C}_{0}} \, \opnorm{\mc{H}_{0}(\xi) - \mc{H}_{0}(\xi + h)}.
$$
It follows that $\mc{C}$ is continuous since $\underset{h \to 0}{\text{lim}} \opnorm{\mc{C}(\xi + h) - \mc{C}(\xi)} = 0$.

Now we derive the formula for the derivative of $\mc{C}$. Since $\mc{H}_{0}$ is differentiable, $\bdot{\mc{H}}_{0} = \bdot{\mc{H}} = \underset{h \to 0}{\text{lim}} \frac{1}{h} (\mc{H}(\xi + h) - \mc{H}(\xi)).$
Using this fact, the identity $\mc{I} \equiv \mc{C}\mc{H}$, and continuity of $\mc{C}$, we have that
\begin{align*}
0 &= \underset{h \to 0}{\text{lim}} \frac{1}{h} \left(\mc{C}(\xi + h)\mc{H}(\xi + h) - \mc{C}(\xi)\mc{H}(\xi)\right)\\
&= \underset{h \to 0}{\text{lim}} \frac{1}{h} \left(\mc{C}(\xi + h)\mc{H}(\xi + h) - \mc{C}(\xi + h)\mc{H}(\xi)\right) + \underset{h \to 0}{\text{lim}} \frac{1}{h} \left(\mc{C}(\xi + h)\mc{H}(\xi) - \mc{C}(\xi)\mc{H}(\xi)\right)\\
&= \underset{h \to 0}{\text{lim}} \frac{1}{h} \mc{C}(\xi + h) \left( \mc{H}(\xi + h) - \mc{H}(\xi) \right) + \underset{h \to 0}{\text{lim}} \frac{1}{h} ( \mc{C}(\xi + h) - \mc{C}(\xi) ) \mc{H}(\xi)\\
&= \mc{C}(\xi) \bdot{\mc{H}}_{0}(\xi) + \underset{h \to 0}{\text{lim}} \frac{1}{h} \left( \mc{C}(\xi + h) - \mc{C}(\xi) \right) \mc{H}(\xi)
\end{align*}
By subtracting the first term in the resulting expression and multiplying by $\mc{H}^{-1}(\xi)$, we obtain
\begin{equation}
\label{eq:Cpo_diff_quo}
\underset{h \to 0}{\text{lim}} \frac{1}{h} \left( \mc{C}(\xi + h) - \mc{C}(\xi)\right)= - \mc{C}(\xi) \bdot{\mc{H}}_{0}(\xi) \mc{C}(\xi).
\end{equation}

\end{proof}

\section{Gradient of the filtered c-optimality criterion}
\label{appdx:c_optimal_void}

In what follows, we derive a formula for the derivative of the c-optimality criterion for an inverse problem modified to incorporate an obscured region. We recall the \emph{filtered Hessian} $\Hvoid$ as presented in~\eqref{eq:hess_void} and $\Psi$ in~\eqref{eq:c_optimal}. Since $\Hvoid^{-1} \equiv \Cpo$, the filtered c-optimality criterion can be written as
\begin{equation}
\label{eq:psi_void}
\Psi(\vxi) = \langle \Hvoid^{-1}(\vxi) c, m \rangle = \langle \Cpo(\vxi) c, m \rangle.
\end{equation}
We then derive the derivatives of~\eqref{eq:psi_void}. To simplify subsequent calculations, we suppress dependency on $\vxi$.

Let $j \in \{1, \dots, N\}$ and recall that Theorem~\ref{thm:Cpo_partial} provides that
\begin{equation}
\label{eq:c_opt_grad_void_small}
\pj\Psi = - \big\langle \big(\pj \Hvoid\big) \Cpo c, \Cpo c \big\rangle.
\end{equation}
Thus, we must compute $\pj \Hvoid$ and manipulate the resulting expressions. By the definition of $\Hvoid$,
\begin{equation}
\label{eq:H_void_partial}
\begin{alignedat}{1}
\pj \Hvoid &= \sig^{-2} \mc{S}^* \pj\big(\mc{B}^*(\mat I - \mat P)\mc{B}\big)\mc{S}\\
&= \sig^{-2}\mc{S}^*\pj(\mc{B}^*\mc{B})\mc{S} - \sig^{-2}\mc{S}^*\pj(\mc{B}^*\Void\mc{B})\mc{S}.
\end{alignedat}
\end{equation}
We computed the first term of~\eqref{eq:H_void_partial} in the proof of Theorem~\ref{thm:Cpo_partial}. Hence, we seek a formula for $\pj(\mc{B}\mat P \mc{B})$.

Let $u \in \U$. Then, the definition of $\mc{B}$ and its adjoint $\mc{B}^{*}$ imply that
$$
(\mc{B}^*\Void\mc{B})u = \dr \sum_{k=1}^d \tau_k \void_k \llangle \tau_k \dr, u \rrangle.
$$
The resulting expression reveals a tensor representation for $\mc{B}^* \mat P \mc{B}$. Specifically,
\begin{equation}
\label{eq:void_tensor}
\mc{B}^* \mat P \mc{B} = \sum_{k=1}^{\ny} \tau_k \dr \void_k \otimes \tau_k \dr.
\end{equation}
Applying Lemma~\ref{lem:tens_prod_deriv} to~\eqref{eq:void_tensor}, we have that
\begin{equation}
\label{eq:obs_void_obs}
\pj (\mc{B}^* \mat P \mc{B}) = \sum_{k=1}^{\ny} \tau_k \dr \void_k \otimes \tau_k \pj \dr + \tau_k \pj (\dr \void_k) \otimes \tau_k \dr.
\end{equation}
Both $\dr$ and $\void_k$ are continuously differentiable in the path parameter $\vxi$. So,
\begin{equation}
\label{eq:dr_p_deriv}
\pj(\dr \void_k) = \dr (\grad p_k \cdot \pj \vec r_k) + \void_k \pj \dr.
\end{equation}
Combining~\eqref{eq:obs_void_obs}, ~\eqref{eq:dr_p_deriv}, and our computation for $\pj(\mc{B}^*\mc{B})$ in~\eqref{eq:BB_partial}, we write~\eqref{eq:H_void_partial} as\footnote{
Here we use that if $u, v, u' \in U$, then for all $v' \in U$, $((u'u) \otimes v)v' = u' (u \llangle v, v' \rrangle) = u'(\big(u \otimes v)v'\big).$ Hence, $(u'u) \otimes v = u'(u \otimes v).$
}
\begin{equation}
\label{eq:Hvoid_deriv_full}
\begin{alignedat}{1}
\pj \Hvoid &= \sig^{-2}\mc{S}^*\Big(\sum_{k=1}^{\ny} (1 - \void_{k})\big(\tau_k \dr \void_k \otimes \tau_k \pj \dr + \tau_k \pj (\dr \void_k) \otimes \tau_k \dr\Big)\mc{S}\\
&- \sig^{-2}\mc{S}^{*}\Big(\sum_{k=1}^{\ny}(\grad\void_k \cdot \pj \vec r_k) \tau_k \dr \otimes \tau_k \dr\Big)\mc{S}.
\end{alignedat}
\end{equation}

In the last step we substitute~\eqref{eq:Hvoid_deriv_full} back into~\eqref{eq:c_opt_grad_void_small} then perform a series of manipulations; since $\grad \void_k \cdot \pj \vr_k$ and $1 - \void_k$ are scalars, we can factor them out of $\llangle \cdot, \cdot \rrangle$. Subsequently, we find that
\begin{align*}
\pj \Psi &= -2\sig^{-2} \sum_{k=1}^{\ny} (1 - \void_k) \llangle \tau_k \dr, \mc{S}\Cpo c \rrangle \llangle \tau_k \pj \dr, \mc{S} \Cpo c \rrangle\\
&-\sig^{-2}\sum_{k=1}^{\ny}(\grad \void_k \cdot \pj \vr_k) \llangle \tau_k \dr, \mc{S}\Cpo c \rrangle^2.
\end{align*}
A more compact expression can be acquired recognizing the identities in~\eqref{eq:B_components}:
\begin{equation}
\label{eq:grad_psi_c_void_full}
\pj \Psi = -2\sig^{-2} \big\langle \mc{B} \mc{S}\Cpo c, (\mat I - \mat P) (\pj \mc{B}) \mc{S}\Cpo c \big\rangle_{2} - \sig^{-2} \big\langle \mc{B} \mc{S}\Cpo c,  (\pj \mat P) \mc{B} \mc{S}\Cpo c \rangle_{2}.
\end{equation}

\section{Proof of Theorem~\ref{thm:disk}}
\label{appdx:disk}

\begin{proof}

First we express the Fourier path $\rf$ as an affine transformation on $\xif \in \R^{4 \Nf}$. To do this, we first define
\begin{equation}
\label{eq:sin_cos}
C_{j} \defeq\cos(\om_{j}t) \quad \text{and} \quad S_{j} \defeq\sin(\om_{j}t), \quad \text{with} \quad \om_{j} = \frac{2\pi j}{|\Ty|}.
\end{equation}
We then build an operator  $\mat \Tf: \Ty \to \R^{2 \times 4\Nf}$ with action on $\xif$ defined by
\begin{equation}
\mat \Tf(t) \xif \defeq
\sum_{j = 1}^{\Nf}
\begin{bmatrix}
a_{j} C_{j}(t) + b_{j}S_{j}(t)\\
c_{j} C_{j}(t) + d_{j} S_{j}(t)
\end{bmatrix}.
\end{equation}
It is easy to show that for any $t \in \Ty$, the transformation $\Tf$ is linear in $\xif$. Hence, $\rf$ in~\eqref{eq:fourier_path} becomes
\begin{equation}
\rf (t; \vxi) = \bar{\vx} +\mat \Tf(t)\xif,
\end{equation}
implying that $\rf$ is an affine transformation. Subsequently, Remark~\ref{rmk:general_disk} provides that for any $\Rd > 0$,
$$
\|\rf - \bar{\vx}\|_2 \leq \Rd, 
\quad \text{whenever} \quad 
\|\vxi\|_{2} \leq \frac{\Rd}{\underset{t \in T_{y}}{\sup} \ \|\mat \Tf(t)\|_{2}}.
$$
We are then left to prove that $\|\mat \Tf(t)\|_2 = \sqrt{\Nf}$, for all $t \in \Ty$, yielding the desired result.

In what follows we fix $t \in \Ty$ and suppress this variable in our calculations. For $\vx \in \R^{2}$, observe that
\begin{align*}
\vx^{\top}\Tf\xif
  &= x_{1}\sum_{j=1}^{\Nf}\!\big(a_{j}C_{j}+b_{j}S_{j}\big)
   + x_{2}\sum_{j=1}^{\Nf}\!\big(c_{j}C_{j}+d_{j}S_{j}\big) \\
  &= \xif^{\top}
     \begin{bmatrix}
       x_{1}C_{1} & x_{1}S_{1} & x_{2}C_{1} & x_{2}S_{1} &
       \dots &
       x_{1}C_{\Nf} & x_{1}S_{\Nf} & x_{2}C_{\Nf} & x_{2}S_{\Nf}
     \end{bmatrix}^{\top}.
\end{align*}
This reveals the action of $\Tf^{\top}$ on $\xif$. Specifically,
$$
\mat T^{\top}_{f}\vx =
     \begin{bmatrix}
       x_{1}C_{1} & x_{1}S_{1} & x_{2}C_{1} & x_{2}S_{1} &
       \dots &
       x_{1}C_{\Nf} & x_{1}S_{\Nf} & x_{2}C_{\Nf} & x_{2}S_{\Nf}
     \end{bmatrix}^{\top}.
$$
This provides us with the following formula for the action of $\Tf^{\top}\Tf$ on $\xif$:
\begin{align*}
(\Tf^{\top}\Tf) \xif &= \left[ \left( C_{1} \sum_{j=1}^{\Nf} a_{j} C_{j} + b_{j}S_{j} \right), \, 
\left( S_{1}\sum_{j=1}^{\Nf} a_{j} C_{j} + b_{j}S_{j} \right), \, 
\left( C_{1} \sum_{j=1}^{\Nf} c_{j} C_{j} + d_{j} S_{j} \right), \,
\left( S_{1} \sum_{j=1}^{\Nf} c_{j} C_{j} + d_{j} S_{j} \right), \, \dots, \, \right. \\
&\left. \quad \left( C_{\Nf} \sum_{j=1}^{\Nf} a_{j} C_{j} + b_{j}S_{j} \right), \, \left( S_{\Nf}\sum_{j=1}^{\Nf} a_{j} C_{j} + b_{j}S_{j} \right), \, \left( C_{\Nf} \sum_{j=1}^{\Nf} c_{j} C_{j} + d_{j} S_{j} \right), \, \left( S_{\Nf}\sum_{j=1}^{\Nf} c_{j} C_{j} + d_{j} S_{j} \right) \right]^{\top}.
\end{align*}
From this, we can obtain an explicit matrix representation of $\Tf^{\top}\Tf$. In particular,
$$
\Tf^{\top}\Tf = 
\begin{bmatrix}
\begin{array}{cc|cc|c|cc}
\mat Y_{1,1} & \mat 0 & \mat Y_{1,2} & \mat 0 & \cdots & \mat Y_{1, \Nf}& \mat 0 \\ 
\mat 0 & \mat Y_{1,1} & \mat 0 & \mat Y_{1,2} & \cdots & \mat 0 & \mat Y_{1, \Nf}\\
\hline
\mat Y_{2, 1} & \mat 0 & \mat Y_{2,2} & \mat 0 & \cdots & \mat Y_{2, \Nf} & \mat 0\\
\mat 0 & \mat Y_{2, 1} & \mat 0 &  \mat Y_{2,2} & \cdots & \mat 0 & \mat Y_{2, \Nf}\\
\hline
\vdots & \vdots & \vdots & \vdots & \ddots & \vdots & \vdots\\
\hline
\mat Y_{\Nf, 1} & \mat 0 & \mat Y_{\Nf, 2} & \mat 0 & \cdots & \mat Y_{\Nf, \Nf} & \mat 0\\
\mat 0 & \mat Y_{\Nf, 1} & \mat 0 & \mat Y_{\Nf, 2} & \cdots & \mat 0 & \mat Y_{\Nf, \Nf}
\end{array}
\end{bmatrix},
\, \text{where} \,
\mat Y_{i,j}(t) \defeq
\begin{bmatrix}
C_{i}(t)C_{j}(t) & C_{i}(t)S_{j}(t)\\
C_{j}(t)S_{i}(t) & S_{i}(t)S_{j}(t)
\end{bmatrix}.
$$
This representation reveals that $\Tf^{\top}\Tf$ is the sum of two rank-1 operators. To see this, define
\begin{align*}
\vec v_{1} &\defeq \frac{1}{\sqrt{\Nf}}[C_{1} \ S_{1} \ 0 \ 0 \ C_{2} \ S_{2} \ \cdots \ C_{\Nf} \ S_{\Nf} \ 0 \ 0]^{\top},\\
\vec v_{2} &\defeq \frac{1}{\sqrt{\Nf}}[0 \ 0 \ C_{1} \ S_{1} \ 0 \ 0 \ \cdots \ 0 \ 0 \ C_{\Nf} \ S_{\Nf}]^{\top}.
\end{align*}
It is clear that $\{\vec v_{1}, \vec v_{2}\}$ is an orthonormal system in $\R^{4\Nf}$. Furthermore, it can be shown that
\begin{equation}
\label{eq:Tf_spectral}
\Tf^{\top}\Tf = \Nf(\vec v_{1}\vec v_{1}^{\top} + \vec v_{2}\vec v_{2}^{\top}).
\end{equation}
We recognize~\eqref{eq:Tf_spectral} as the spectral decomposition of $\Tf^{\top}\Tf$, implying that $\sqrt{\Nf}$ is the only eigenvalue of this matrix. The proof is finished since it follows that
$$
\|\mat T_{f}\|_{2} = \sqrt{\Nf}.
$$

\end{proof}

\section{A discretized solution operator and its adjoint}
\label{appdx:solution}

Here we derive the action discrete solution operator $\mat S: \RNM \to \RNMT$ from the model~\eqref{eq:conv_diff}. Subsequently, we develop an algorithm for computing the action of the adjoint $\mat S^{*}: \RNMT \to \RNM$.

Consider $\{t_\ell\}_{\ell=0}^{\nt}$, the discretization of $T$, and define $u_\ell \defeq u(\vx, t_\ell)$. Using the implicit Euler scheme,
$$
u_{t}(\vec x, t_{\ell+1}) \approx  \frac{u^{\ell+1} - u^{\ell}}{\dt}, \quad \ell \in \{0, \dots, \nt-1\}.
$$
Substituting this into~\eqref{eq:conv_diff},
\begin{equation}
\label{eq:conv_diff_temporal}
u^{\ell+1} - \Delta t\,\alpha \Delta u^{\ell+1} + \Delta t\, \vec v \cdot \grad u^{\ell+1} = u^{\ell} + a_{\ell+1}\Delta t\, m, \quad a_{\ell+1} \defeq a(t_{\ell+1}).
\end{equation}
Hence, obtaining a solution to~\eqref{eq:conv_diff} amounts to solving an elliptic PDE at each time-step. Now, define the space $
\ms{U} \defeq \{u \in H^{1}(\Om): \ u\big\vert_{E_{0}} = 0\}.$
Then, the weak form of~\eqref{eq:conv_diff_temporal} is: find $u^{\ell+1} \in \ms{U}$ such that
\begin{equation}
\label{eq:conv_diff_weak_temporal}
\langle u^{\ell+1}, \zeta\rangle + \Delta t\, \alpha \langle\grad u^{\ell+1}, \grad\zeta\rangle + \Delta t\, \langle\vec v \cdot u^{\ell+1}, \zeta\rangle = \langle u^{\ell}, \zeta\rangle + a_{\ell+1}\Delta t\, \langle m, \zeta\rangle, \quad \text{for all } \zeta \in \ms{U}.
\end{equation}
We discretize~\eqref{eq:conv_diff_weak_temporal} in space with the finite element method; see Subsection~\ref{sec:hilbert}. Define $\mat A \in \R^{\nx \times \nx}$ by
$$
A_{i,j} \defeq \langle \phi_{i}, \phi_{j} \rangle  + \alp \dt\, \langle \grad \phi_{i}, \grad \phi_{j} \rangle + \dt\, \langle \vec v \cdot \grad \phi_{i}, \phi_{j} \rangle, \quad i,j \in \{1, 2, \dots, n_{x}\}.
$$
Now, let $\vec u_{\ell} \in \RNM$ be the finite element coefficients of the solution at the $\ell$-th time and form the snapshot matrix 
$\mat U = [\vec u_{1} \, \dots \, \vec u_{\nt}]$. Then, $\mat S \vec m = \mat U$ where $\vec u_{\ell}$ are found by solving
\begin{equation}
\label{eq:solution_system}
\mat A \vec u_{\ell+1} = \mat M \vec u_{\ell} + a_{\ell+1}\dt\, \mat M \vec m, \quad \ell \in \{0, \dots \nt-1\}.
\end{equation}

\boldheading{The adjoint} To derive the adjoint, let $\vec a \defeq [a_{1} \, \dots \, a_{\nt}]^{\top}$ and define
\begin{equation}
\label{eq:power_poly}
p_{\ell}(x;\vec a) \defeq \sum_{j=1}^{\ell} a_{j}x^{\ell - j + 1}.
\end{equation}
We then reveal a recursive relationship and write the result in terms of~\eqref{eq:power_poly}. Specifically, observe that
\begin{align*}
\vec u_{1} &= \mat A^{-1} \mat M \vec u_{0} + a_{1}\Delta t\, \mat A^{-1} \mat M \vec m\\
\vec u_{2} &= \mat A^{-1} \mat M \vec u_{1} + a_{2}\Delta t\, \mat A^{-1} \mat M \vec m\\
&= (\mat A^{-1}\mat M)^{2}\vec u_{0} + a_{1}\Delta t\, (\mat A^{-1}\mat M)^{2} \vec m + a_{2}\Delta t\, \mat A^{-1}\mat M \vec m\\
\vdots\\
\vec u_{\ell} &= (\mat A^{-1}\mat M)^{\ell}\vec u_{0} + \Delta t\, p_{\ell}(\mat A^{-1}\mat M; \vec a)\vec m.
\end{align*}
Assuming that $\vec u_{0} = \vec 0$, the recursive relationship is defined by
\begin{equation}
\label{eq:recursive}
\vec u_{\ell} = \Delta t\, p_{\ell}(\mat A^{-1}\mat M; \vec a)\vec m, \quad \ell \in \{1, \dots, \nt\}. 
\end{equation}

In the next step we let $\mat V \in \RNMT$ and seek $\mat S^{*}$ such that $\llangle \mat S \vec m, \mat V \rrangle_{\mat M} = \langle \vec m, \mat S^{*} \mat V \rangle_{\mat M}$. Using the identity for $\llangle \cdot, \cdot \rrangle_{\mat M}$ presented in~\eqref{eq:MT_identities_one}, we have that
$$
\llangle\mat S \vec m, \mat V\rrangle_{\mat M} = \vec 1^{\top} (\mat U \odot \mat M \mat V)\vec w = \sum_{\ell=1}^{n} w_{\ell} \vec 1^{\top}(\vec u_{\ell} \odot \mat M \vec v_{\ell}).
$$
Note that for any $\vec m_{1}, \vec m_{2} \in \R^{\nx}$,
$$
\vec 1^{\top} (\vec m_{1} \odot \vec m_{2}) = \vec m_{1}^{\top} \vec m_{2} = \vec m_{2}^{\top} \vec m_{1}.
$$
Thus, using~\eqref{eq:recursive}, the identity above, then inserting $\mat M^{-1} \mat M = \mat I$ in the right place, we have
\begin{align*}
\sum_{\ell=1}^{n} w_{\ell} \vec 1^{\top}(\vec u_{\ell} \odot \mat M \vec v_{\ell}) &= \dt\, \sum_{\ell=1}^{n_{t}} w_{\ell} \vec 1^{\top} \big( p_{\ell}(\mat A^{-1}\mat M; \vec a) \vec m \odot \mat M \vec v_{\ell} \big)\\
&= \left(\dt\, \sum_{\ell=1}^{n_{t}} w_{\ell} \vec v_{\ell}^{\top}\mat M p_{\ell}(\mat A^{-1}\mat M; \vec a)\mat M^{-1}\right)\mat M\vec m\\
&= \langle \vec m, \mat S^{*} \mat V \rangle_{\mat M},
\end{align*}
where
\begin{equation}
\label{eq:s_adjoint_power}
\mat S^{*} \mat V \defeq \dt\, \left( \sum_{\ell=1}^{n_{t}} w_{\ell} \vec v_{\ell}^{\top}\mat M p_{\ell}(\mat A^{-1}\mat M; \vec a)\mat M^{-1} \right)^{\top}.
\end{equation}

To obtain a useful representation for the action of $\mat S^{*} \mat V$, we must perform some additional steps. Let $\ell \in \{1, \dots, \nt\}$ and recall the definition of $p_{\ell}$ given in~\eqref{eq:power_poly}. Then, 
\begin{align*}
\mat M^{-1} p_{\ell}(\mat A^{-1} \mat M; \vec a)^{\top} \mat M &= \mat M^{-1} \left( \sum_{j=1}^{\ell} a_{\ell} (\mat A^{-1} \mat M)^{\ell - j + 1} \right)^{\top} \mat M\\
&= \sum_{j=1}^{\ell} a_{\ell} \mat M^{-1} (\mat M \mat A^{-\top})^{\ell - j + 1} \mat M\\
&= \sum_{j=1}^{\ell} a_{\ell} (\mat A^{-\top} \mat M)^{\ell - j + 1}.
\end{align*}
Thus, by the definition of~\eqref{eq:power_poly}, we have that $\mat M^{-1} p_{\ell}(\mat A^{-1} \mat M; \vec a)^{\top} \mat M = p_{\ell}(\mat A^{-\top} \mat M; \vec a)$. Returning to~\eqref{eq:s_adjoint_power},
\begin{align*}
\mat S^{*} \mat V &= \left(\dt\, \sum_{\ell=1}^{n_{t}} w_{\ell} \vec v_{\ell}^{\top}\mat M p_{\ell}(\mat A^{-1}\mat M; \vec a)\mat M^{-1}\right)^{\top}\\
&= \dt\, \sum_{\ell=1}^{n_{t}} w_{\ell} \big(\mat M^{-1} p_{\ell}(\mat A^{-1}\mat M; \vec a)^{\top}\mat M\big) \vec v_{\ell}\\
&= \dt\, \sum_{\ell=1}^{n_{t}} w_{\ell} p_{\ell}(\mat A^{-\top} \mat M; \vec a) \vec v_{\ell}.
\end{align*}
By examining each term in the resulting sum and expanding $p_\ell(\mat A^{-\top} \mat M; \vec a)$, we find the recursive relation
\begin{align*}
\dt^{-1}\, \mat S^{*}\mat V &= a_{1}w_{1}(\mat A^{-\top}\mat M)\vec v_{1}\\
&+ a_{1}w_{2}(\mat A^{-\top}\mat M)^{2}\vec v_{2}+ a_{2}w_{2}(\mat A^{-\top}\mat M)\vec v_{2}\\
&\vdots\\
&+ a_{1}w_{\nt}(\mat A^{-\top} \mat M)^{\nt} \vec v_{\nt} + a_{2}w_{\nt}(\mat A^{-\top} \mat M)^{\nt - 1} \vec v_{\nt} + \dots + a_{\nt}w_{\nt}(\mat A^{\top} \mat M) \vec v_{\nt}.
\end{align*}
Factoring the powers of $\mat A^{-\top} \mat M$, this becomes
\begin{align*}
\dt^{-1}\, \mat S^{*}\mat V &= (\mat A^{-\top} \mat M)(a_{1}w_{1}\vec v_{1} + a_{2}w_{2}\vec v_{2} + \dots + a_{\nt}w_{\nt}\vec v_{\nt})\\
&+ (\mat A^{-\top}\mat M)^{2}(a_{1}w_{2}\vec v_{2} + a_{2}w_{3}\vec v_{3} + \dots + a_{\nt-1}w_{\nt}\vec v_{\nt})\\
& \vdots\\
&+ (\mat A^{-\top} \mat M)^{\nt - 1}(a_{1}w_{\nt -1}\vec v_{\nt -1} + a_{2} w_{\nt} \vec v_{\nt})\\
&+ (\mat A^{-\top}\mat M)^{\nt}(a_{1}w_{\nt}\vec v_{\nt}).
\end{align*}
Thus,
\begin{equation}
\label{eq:final_S_adjoint}
\mat S^{*} \mat v = \dt^{-1}\, \sum_{\ell=1}^{\nt} (\mat A^{-\top} \mat M)^{\ell} \sum_{j=1}^{\nt - \ell + 1} a_{j} w_{j + \ell - 1} \vec v_{j + \ell -1}.
\end{equation}
From~\eqref{eq:final_S_adjoint} we derive Algorithm~\ref{alg:solution_adjoint}; a method for applying $\mat S^*$ to $\mat V \in \RNMT$ with only $\nt$ solves.
\begin{algorithm}[H]
  \caption{Algorithm for computing $\mat S^{*} \mat V$}
  \begin{algorithmic}[1]
    \State \textbf{Input:} $\mat V \in \RNMT$, $\vec a \in \R^{\nt}$, $\vec w \in \R^{\nt}$
    \State \textbf{Output:} $\vec q = \mat S^{*} \mat V \in \RNM$
    \State Set $\vec q = \vec 0 \in \R^{\nx}$
    \For{$\ell = 1$ to $n_{t}$}
      \State Compute
      $$
      \vec z = \vec q + \sum_{j=1}^{\ell} a_{j} w_{\nt - \ell + j}\vec v_{\nt - \ell + j}
      $$
      \State Solve $\mat A^{\top} \vec q = \mat M \vec z$ for $\vec q$
    \EndFor
    \State \Return $\dt\, \vec q$
  \end{algorithmic}
  \label{alg:solution_adjoint}
\end{algorithm}

\end{appendices}

\null \vfill
\begin{flushright}
    \scriptsize \framebox{\parbox{5.0in}{
    The submitted manuscript has been created by UChicago Argonne, LLC,
    Operator of Argonne National Laboratory (``Argonne"). Argonne, a
    U.S. Department of Energy Office of Science laboratory, is operated
    under Contract No. DE-AC02-06CH11357. The U.S. Government retains for
    itself, and others acting on its behalf, a paid-up nonexclusive,
    irrevocable worldwide license in said article to reproduce, prepare
    derivative works, distribute copies to the public, and perform
    publicly and display publicly, by or on behalf of the Government.
    The Department of
    Energy will provide public access to these results of federally sponsored research in accordance
    with the DOE Public Access Plan. http://energy.gov/downloads/doe-public-access-plan. }}
    \normalsize
  \end{flushright}

\end{document}